\documentclass[11pt]{amsart}
\usepackage[T1]{fontenc}
\usepackage[utf8]{inputenc}
\usepackage[UKenglish]{babel}
\usepackage[a4paper,margin=29mm,headheight=14pt]{geometry}
\usepackage{lmodern,microtype,amssymb,mathtools,amsthm}
\usepackage{enumitem,hyperref}
\hypersetup{hidelinks,pdftitle={Voevodsky's motives and zeroth slice of motivic sphere},pdfauthor={Bo Zhang},pdfsubject={Nisnevich motives as modules over the zeroth slice after inverting residue characteristics}}
\numberwithin{equation}{section}
\newtheorem{theorem}{Theorem}[section]
\newtheorem{proposition}[theorem]{Proposition}
\newtheorem{lemma}[theorem]{Lemma}
\newtheorem{corollary}[theorem]{Corollary}
\theoremstyle{definition}
\newtheorem{definition}[theorem]{Definition}
\theoremstyle{remark}
\newtheorem{remark}[theorem]{Remark}
\title[Voevodsky's motives and the zeroth slice]{Voevodsky's motives and zeroth slice of motivic sphere}
\author{Bo Zhang}
\email{bozhang9511@gmail.com}
\subjclass[2020]{14F42, 14C15, 18N60}
\keywords{Voevodsky motives, slice filtration, motivic cohomology, finite correspondences, base change, mixed characteristic}
\begin{document}
\raggedbottom
\begin{abstract}
Let \(S\) be a noetherian scheme of finite Krull dimension and let
\(R\) be a commutative ring in which every positive residue
characteristic of \(S\) is invertible. We prove that Voevodsky's
stable Nisnevich category of motives with \(R\)-coefficients is
canonically equivalent to modules over the zeroth slice of the
motivic sphere with derived \(R\)-coefficients. The comparison
identifies the corresponding commutative coefficient algebras and
is compatible with arbitrary pullback. The theorem applies to
singular and nonreduced schemes, including mixed-characteristic
bases, and requires no excellence or finite-type hypothesis. Its
special cases include rational coefficients over arbitrary bases,
arbitrary coefficients over \(\mathbb Q\)-schemes, and
\(\mathbb Z[1/p]\)-algebra coefficients over
\(\mathbb Z_{(p)}\)-schemes. The proof establishes base change for
finite-cycle spectra using permutation quotients, families of
cycles, and arithmetic power homotopies, then passes to general
bases by universal homeomorphism invariance and continuity.
\end{abstract}
\maketitle
\begingroup
\small
\tableofcontents
\endgroup
\clearpage

\section{Introduction}\label{sec:introduction}

Voevodsky's category of motives and the slice filtration of the
motivic stable homotopy category provide two constructions of motivic
cohomology. The first uses finite correspondences and Nisnevich
sheaves; the second extracts the zeroth slice of the motivic sphere.
Over a field of characteristic zero these constructions are related by the identification of
the zeroth slice with the motivic Eilenberg--MacLane spectrum and by
the description of motives as modules over that spectrum. Over a
general base, the corresponding comparison requires control of
finite relative cycles under arbitrary pullback.

This paper establishes the comparison over noetherian schemes of
finite Krull dimension, with coefficients in which every positive
residue characteristic is invertible. The base may be singular,
nonreduced, or of mixed characteristic. The category on the
finite-correspondence side is the stable Nisnevich category
\(\mathrm{DM}(S,R)\).

\subsection{The comparison theorem}

Let \(\mathrm{SH}(S)\) denote the motivic stable homotopy category,
and let \(\mathrm{DA}(S,R)\) denote the stable Nisnevich
\(\mathbb A^1\)-derived category without transfers. We use their
presentable stable infinity-category enhancements. The graph functor
induces an adjunction
\[
 F_{S,R}:\mathrm{DA}(S,R)\rightleftarrows
 \mathrm{DM}(S,R):G_{S,R},
 \qquad HR_S=G_{S,R}(\mathbf1_R).
\]
The last object is the motivic Eilenberg--MacLane algebra with its
specified ordinary linear structure. Its underlying motivic spectrum
is also denoted by \(HR_S\).

Bachmann--Elmanto--Morrow construct an absolute commutative algebra
\(A_{S,\mathbb Z}\) in \(\mathrm{DA}(S,\mathbb Z)\) whose
underlying motivic spectrum is \(s_0(\mathbf1_S)\)
\cite[Corollary~9.7 and Remark~9.11]{BEM}. For a commutative ring
\(R\), put
\[
 A_{S,R}=A_{S,\mathbb Z}\otimes_{\mathbb Z}^{\mathbf L}R.
\]
Framed correspondences give a natural unital comparison
\(\mu_{S,R}:A_{S,R}\to HR_S\). Its adjoint
\(F_{S,R}(A_{S,R})\to\mathbf1_R\) defines the symmetric monoidal
functor
\begin{equation}\label{eq:introduction-comparison}
 \Theta_{S,R}(M)=
 \mathbf1_R\otimes_{F_{S,R}(A_{S,R})}F_{S,R}(M).
\end{equation}
These maps and functors are constructed in
Section~\ref{sec:framed-comparison}.

\begin{theorem}\label{thm:main}
Let \(S\) be a noetherian scheme of finite Krull dimension and let
\(R\) be a commutative ring. Suppose that the exponential
characteristic of every residue field of \(S\) is invertible in
\(R\). Then the canonical map
\[
 \mu_{S,R}:A_{S,R}\xrightarrow{\simeq}HR_S
\]
is an equivalence of commutative algebras, and
\[
 \Theta_{S,R}:
 \operatorname{Mod}_{A_{S,R}}(\mathrm{DA}(S,R))
 \xrightarrow{\simeq}\mathrm{DM}(S,R)
\]
is a symmetric monoidal equivalence. For every morphism
\(f:T\to S\) with \(T\) noetherian and finite dimensional, the
canonical exchange map
\[
 \tau_{f,R}:f^*HR_S\xrightarrow{\simeq}HR_T
\]
is an equivalence. The module equivalences are compatible with these
pullbacks and with derived extension of coefficients.
\end{theorem}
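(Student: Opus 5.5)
The plan is to reduce everything to the statement that \(\mu_{S,R}\) is an equivalence, and then deduce the module equivalence formally. We use a monadicity argument in the style of Röndigs--Østvær and Cisinski--Déglise. The functor \(G_{S,R}\) is conservative and commutes with colimits. Conservativity holds because \(\mathrm{DM}(S,R)\) is compactly generated by the objects \(F_{S,R}(\Sigma^\infty_+X)(q)[n]\) with \(X\) smooth over \(S\). Colimit preservation holds because \(F_{S,R}\) preserves these compact generators. Together these make \(\mathrm{DM}(S,R)\) monadic over \(\mathrm{DA}(S,R)\).

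The resulting monad is \(G F\). By a projection formula, \(GF(M)\simeq HR_S\otimes M\) on the generators \(M=\Sigma^\infty_+X\). This formula is the standard consequence of the fact that \(G_{S,R}\) sends \(F(\Sigma^\infty_+X)\) to \(HR_S\otimes\Sigma^\infty_+X\). Granting it, \(\mathrm{DM}(S,R)\simeq\operatorname{Mod}_{HR_S}(\mathrm{DA}(S,R))\). Composing with \(\mu_{S,R}\) then identifies this module category with \(\operatorname{Mod}_{A_{S,R}}(\mathrm{DA}(S,R))\), and the composite is \(\Theta_{S,R}\) by construction. Compatibility with pullback and with extension of coefficients follows once \(\tau_{f,R}\) is an equivalence. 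On the \(A\)-side both properties are formal: \(f^*s_0\mathbf 1_S\simeq s_0\mathbf 1_T\) by Bachmann--Elmanto--Morrow, and \(A_{S,R}\) is a derived tensor with \(R\).

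To prove \(\mu_{S,R}\) is an equivalence, we first treat fields \(k\), or more generally smooth schemes over a perfect field. There the exponential characteristic is invertible in \(R\), so the comparison is known: Levine and Voevodsky show \(s_0\mathbf1\simeq H\mathbb Z\) over perfect fields, and the framed-to-finite comparison over fields identifies \(\mu\) with that map. One then reduces to \(R=\mathbb Z[1/e]\) by derived base change on both sides. For a general \(S\), the condition \(\mu_{S,R}\simeq\) holds if its pullbacks to all points are equivalences. This is the step that needs base change. Slices are stable under arbitrary pullback by Bachmann--Elmanto--Morrow, so \(A_{S,R}\) is compatible with pullback. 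Hence \(\mu_{S,R}\) is an equivalence for all \(S\) once we know that \(\tau_{f,R}\) is an equivalence for all \(f\), including inclusions of points.

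The main obstacle is therefore proving that \(f^*HR_S\to HR_T\) is an equivalence. We would argue with finite-cycle spectra as follows.
\begin{enumerate}
\item Describe \(HR_S\) through the spectra of relative cycles \(\mathbb Z_{\mathrm{tr}}(\mathbb A^n)/\mathbb Z_{\mathrm{tr}}(\mathbb A^n\setminus0)\). Compare these with symmetric powers, i.e.\ permutation quotients \((\mathbb A^n)^d/\Sigma_d\). With \(e\) invertible, the quotient by \(\Sigma_d\) behaves well under base change after \(\mathbb A^1\)-localization; the arithmetic power homotopies are meant to show that the relevant torsion is killed.
\item Write \(S\) as a cofiltered limit of finite-type \(\mathbb Z\)-schemes. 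Continuity of \(\mathrm{DA}\) and of cycle sheaves under such limits reduces the problem to finite-type bases.
\item Use invariance under universal homeomorphisms, which holds once \(e\) is inverted. This lets us pass to reduced schemes, and after stratification to regular strata that are essentially smooth over perfect fields or Dedekind bases.
\item On these strata, use localization, where the gluing triangles preserve \(HR\) by the pullback property for closed immersions, to get \(\tau\) for all \(f\).
\end{enumerate}
The hardest part is step 1: showing that \(f^*\) commutes with the cycle-spectrum presentation for nonflat \(f\). There I would first try to prove representability of relative cycles by symmetric powers up to \(e\)-power homotopy.
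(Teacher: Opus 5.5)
\textbf{There is a genuine gap: the base-change step is not supplied.} Your outline names the right ingredients, but the step that carries the theorem is base change \(f^*HR_S\simeq HR_T\) along non-flat \(f\), and you leave it as a programme. The mechanisms you suggest for it do not work as stated. Inverting \(e\) in \(R\) makes \(d!\) invertible neither on the base nor in \(\Lambda\). So neither averaging nor a transfer argument shows that the motive of \((\mathbb A^n)^d/\Sigma_d\) commutes with base change. The paper needs inputs that your plan only gestures at:
(i) Relative cycles agree with group completions of symmetric powers only over \emph{seminormal} bases, and only after inverting multiplicities in \(P\). This uses Rydh's families of cycles with a uniform denominator bound; it is not an identification ``up to \(e\)-power homotopy''.
(ii) Base change for the geometric quotient motives is proved by induction on stabilizers. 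Averaging retractions kill the error on \(T[1/n!]\). On each fibre \(T_p\), symmetric-polynomial retractions and homotopies between Frobenius lifts kill it only after a coordinatewise \(p\)-power telescope.
(iii) That telescope is removed by an integral polynomial homotopy. It shows that the \(p\)-th power map acts by \(p^r\) on the reduced cycle object of \(\mathbb G_m^r\), and \(p^r\) is invertible in \(\Lambda\).
(iv) You must also pass from free motives of the cycle monoids to the additive cycle complexes, via group completion and Eilenberg--MacLane bar constructions. This linearization is absent from your plan.

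\textbf{The surrounding reductions also fail in specific places.} Step 4 is circular. Gluing along strata with triangles that ``preserve \(HR\) by the pullback property for closed immersions'' presupposes \(\tau_i\) for closed immersions, which is the hardest non-flat case. The paper instead proves exchange between arbitrary seminormal bases directly. It then reaches finite-type bases through the finite seminormalization and [CD, Proposition~11.2.7]; no stratification into regular pieces occurs.

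Step 2 fails for the models you chose. Every nonempty finite-type \(\mathbb Z\)-scheme has closed points of positive characteristic. So for \(S=\operatorname{Spec}\mathbb Q\) and \(R=\mathbb Z\), no such model satisfies the coefficient hypothesis. You must approximate over \(B=\operatorname{Spec}\mathbb Z[\ell^{-1}:\ell\notin P]\) instead.

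Finally, your opening reduction is not formal. Your justification of the projection formula \(GF(M)\simeq HR_S\otimes M\) merely restates it. By monadicity that formula is equivalent to the module equivalence itself, and it does not follow from \(\mu_{S,R}\) alone. The paper derives the module equivalence from coefficient exchange across all finite-type \(B\)-schemes, via [CD, Proposition~11.4.7] and continuity. It proves \(\mu\) only at \(B\), whose residue fields \(\mathbb Q\) and \(\mathbb F_p\) are perfect, and transports it along the exchange maps. It therefore never needs the field comparison at imperfect residue fields.
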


The exponential characteristic of a field of characteristic zero
is \(1\); in characteristic \(p>0\), it is \(p\). Thus the
coefficient hypothesis concerns exactly the positive residue
characteristics occurring on \(S\). It imposes no flatness or
finiteness condition on \(R\).

By transitivity of module categories, the theorem also gives
\[
 \operatorname{Mod}_{HR_S}(\mathrm{SH}(S))
 \simeq\mathrm{DM}(S,R).
\]
In terms of the zeroth slice, the coefficient algebra on the left is
\[
 \underline{A_{S,R}}
 =s_0(\mathbf1_S)
   \otimes_{H\mathbb Z^{\mathrm{top}}_S}HR^{\mathrm{top}}_S,
\]
where \(HR^{\mathrm{top}}_S\) is the image of the ordinary
Eilenberg--MacLane spectrum in \(\mathrm{SH}(S)\), and the relative
tensor product uses the specified integral linear structure. In
particular, when \(R=\mathbb Z\), the source is
\(\operatorname{Mod}_{s_0(\mathbf1_S)}(\mathrm{SH}(S))\).

\begin{corollary}\label{cor:special-cases}
The conclusions of Theorem~\ref{thm:main} hold in each of the
following cases, with \(S\) noetherian and finite dimensional:
\begin{enumerate}[label=\textup{(\roman*)}]
\item \(S\) is arbitrary and \(R\) is a commutative
\(\mathbb Q\)-algebra;
\item \(S\) is a \(\mathbb Q\)-scheme and \(R\) is an arbitrary
commutative ring;
\item \(S\) is a \(\mathbb Z_{(p)}\)-scheme and \(R\) is a
commutative \(\mathbb Z[1/p]\)-algebra;
\item \(S\) is an \(\mathbb F_p\)-scheme and \(R\) is a
commutative \(\mathbb Z[1/p]\)-algebra.
\end{enumerate}
Here \(p\) is any prime number. In particular, case \textup{(iii)}
includes mixed-characteristic bases.
\end{corollary}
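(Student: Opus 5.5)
The corollary should follow directly from Theorem~\ref{thm:main}. In each case it suffices to check the single hypothesis of that theorem: for every point \(s\in S\), the exponential characteristic of \(\kappa(s)\) is invertible in \(R\). The noetherian and finite-dimensional assumptions are built into the statement, so nothing else needs checking. The exponential characteristic is either \(1\), which is always a unit, or a prime \(\ell=\operatorname{char}\kappa(s)\). So the task is to locate the possible positive residue characteristics in each case and show they are units in \(R\).

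I would treat the four cases in turn.
\begin{enumerate}[label=\textup{(\roman*)}]
\item If \(R\) is a \(\mathbb Q\)-algebra, every prime \(\ell\) is invertible in \(R\), since the structure map \(\mathbb Z\to R\) factors through \(\mathbb Q\). The hypothesis therefore holds whatever residue characteristics occur on \(S\).
\item If \(S\) is a \(\mathbb Q\)-scheme, each residue field \(\kappa(s)\) receives a ring map from \(\mathbb Q\), so it has characteristic zero. Every exponential characteristic is then \(1\), and the hypothesis holds for any \(R\).
\item If \(S\) is a \(\mathbb Z_{(p)}\)-scheme, each \(\kappa(s)\) is a field receiving a map from \(\mathbb Z_{(p)}\). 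The kernel of that map is a prime ideal, either \(0\) or \(p\mathbb Z_{(p)}\). Hence \(\operatorname{char}\kappa(s)\in\{0,p\}\), the exponential characteristic lies in \(\{1,p\}\), and both are units in a \(\mathbb Z[1/p]\)-algebra.
\item An \(\mathbb F_p\)-scheme is in particular a \(\mathbb Z_{(p)}\)-scheme via \(\mathbb Z_{(p)}\to\mathbb F_p\), so this case is a special instance of (iii).
\end{enumerate}

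For the remark on mixed characteristic in (iii), it is enough to give an example. Take \(S=\operatorname{Spec}\mathbb Z_{(p)}\), or any finite-type \(\mathbb Z_{(p)}\)-scheme. It has points of residue characteristic \(0\) and \(p\), and it is noetherian of finite Krull dimension. So (iii) does include such bases.

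There is no real obstacle here. The only point needing care is the residue-characteristic bookkeeping in (iii): the only primes of \(\mathbb Z_{(p)}\) are \(0\) and \(p\mathbb Z_{(p)}\), which is what limits the characteristics to \(0\) and \(p\).
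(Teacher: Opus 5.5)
Your proposal is correct and is essentially the paper's proof: it checks in each case that every positive residue characteristic of \(S\) is invertible in \(R\), so that Theorem~\ref{thm:main} applies. Your added details are all sound. These are the prime-ideal argument for \(\mathbb Z_{(p)}\to\kappa(s)\), the reduction of (iv) to (iii), and the example \(\operatorname{Spec}\mathbb Z_{(p)}\) for mixed characteristic.
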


\begin{proof}
In \textup{(i)}, every prime is invertible in \(R\). In
\textup{(ii)}, every residue field has characteristic zero. In
\textup{(iii)} and \textup{(iv)}, the only possible positive residue
characteristic is \(p\).
\end{proof}

\subsection{Relation to previous work}

Over a field of characteristic zero, Voevodsky identified the zeroth
slice of the sphere with motivic cohomology
\cite[Theorem~6.6]{VoevSlice}.
Levine's homotopy coniveau tower established the corresponding slice
identifications over perfect fields \cite[Theorems~6.4.2 and~10.5.1]{Levine}.
R\"ondigs--\O{}stv{\ae}r proved the equivalence between motives and
modules over motivic cohomology in characteristic zero
\cite[Theorem~1]{RO}. Hoyois--Kelly--\O{}stv{\ae}r extended the
module comparison to perfect fields of positive characteristic after
inverting the characteristic exponent \cite[Theorem~5.8]{HKO}.

For schemes over a perfect field, Cisinski--D\'eglise proved that, with the
characteristic exponent inverted, modules over motivic cohomology
recover Nisnevich motives on regular noetherian finite-dimensional
bases \cite[Theorem~3.1]{CDIntegral}. Over possibly singular
schemes, their general comparison identifies modules over the
spectrum pulled back from the ground field with
\(\mathrm{DM}_{\mathrm{cdh}}\)
\cite[Theorem~5.1]{CDIntegral}. With rational coefficients, they
identify Beilinson motives with Nisnevich motives with transfers over
excellent geometrically unibranch schemes
\cite[Theorem~16.1.4 and Corollary~16.1.7]{CD}. These statements
specify both the base hypotheses and the descent topology of the
comparison categories.

Spitzweck constructed a motivic cohomology spectrum over Dedekind
domains of mixed characteristic and the associated categories of
modules \cite{Spitzweck}. Hoyois subsequently identified its absolute
extension with the framed suspension of the constant degree sheaf
\cite[Theorem~21 and Corollary~22]{HoyoisLocalization}. More recently,
Bachmann--Elmanto--Morrow proved, for every quasicompact
quasiseparated scheme \(S\), that the zeroth slice of the sphere
is an absolute commutative motivic algebra and agrees with the
zeroth slice of algebraic \(K\)-theory
\cite[Theorem~9.3 and Corollary~9.7]{BEM}. Their construction gives
a category of modules with a six-functor formalism
\cite[Corollary~9.10]{BEM}.

Theorem~\ref{thm:main} compares this absolute zero-slice construction
with the category defined by Nisnevich sheaves with finite
correspondences. It removes regularity and geometric unibranchness
from the base hypotheses in the coefficient range considered here,
and treats mixed characteristic by the same argument. The essential
step is the compatibility of the intrinsic finite-transfer
coefficient algebra with arbitrary pullback. Once that step is
proved, the reconstruction criterion of Cisinski--D\'eglise
\cite[Proposition~11.4.7]{CD} applies.

\subsection{Outline of the proof}

Let \(P\) be the set of positive residue characteristics of \(S\).
We work over the complementary localization
\[
 B=\operatorname{Spec}\mathbb Z[\ell^{-1}:\ell\notin P],
 \qquad \Lambda=\mathbb Z[p^{-1}:p\in P].
\]
The scheme \(S\) factors through \(B\), and \(R\) is a
\(\Lambda\)-algebra. This choice permits approximation by
finite-type \(B\)-schemes without introducing new residue
characteristics.

The formal reduction is the forward moving-tower criterion,
Theorem~\ref{thm:forward-criterion}. It separates the error of the
comparison over \(B\) from the errors of structural base change.
Vanishing of their colimits gives the algebra and module comparisons;
eventual vanishing of whole finite transitions is a sufficient
condition. The proof below establishes the vanishing-colimit
conditions directly. Reconstruction and continuity are included in
the criterion, so the remaining work concerns the coefficient
comparison and the geometry of finite cycles.

The coefficient spectrum with transfers is presented by the reduced
finite-cycle sheaves on split tori. On seminormal bases, these cycle
sheaves admit a presentation by group completions of symmetric powers
after inverting multiplicities supported on \(P\). A uniform bound
on the denominators follows from Rydh's theory of families of cycles.
We therefore study the canonical base-change maps for geometric
permutation quotients of tori. Induction on stabilizers reduces their
comparison to smooth fixed-locus quotients. Averaging handles the
locus on which the group order is invertible. At a residue
characteristic \(p\), symmetric-polynomial formulas and homotopies
between lifts of Frobenius give vanishing after a coordinate
\(p\)-power telescope.

Group completion and additive delooping pass these geometric
comparisons to the additive cycle objects. An integral polynomial
homotopy shows that coordinatewise \(p\)-th power acts by
\(p^r\) on the fully reduced cycle object of a rank-\(r\) torus.
Since \(p\) is invertible in \(\Lambda\), the additive telescope
can be removed. Pointwise conservativity then proves the canonical
coefficient exchange on seminormal bases. The field comparison,
invariance under universal homeomorphisms, and finite
seminormalization establish the theorem on finite-type
\(B\)-schemes. Continuity extends the module equivalence to all
noetherian finite-dimensional bases, and derived scalar extension
passes from \(\Lambda\) to \(R\).

Section~\ref{sec:models} fixes the coefficient conventions and the
cycle presentation. Section~\ref{sec:criterion} proves the forward
moving-tower criterion and its reconstruction and continuity inputs.
Section~\ref{sec:framed-comparison} constructs
the canonical comparison. Sections~\ref{sec:arithmetic-setup}
and~\ref{acg-section} develop the arithmetic base and the geometric
base-change argument. Section~\ref{sec:acc-cycles} treats cycle
families and power maps, Section~\ref{aca-linear-section} proves
the additive stage comparison, and Section~\ref{aca-reconstruction}
verifies the two vanishing-colimit hypotheses and completes the
proof by applying Theorem~\ref{thm:forward-criterion}.

\subsection{Conventions}

All schemes serving as bases are noetherian and of finite Krull
dimension unless stated otherwise. The category \(\mathrm{Sm}_S\)
consists of separated smooth \(S\)-schemes of finite type.
Representable presheaves of possibly singular schemes are restricted
to this smooth site. All tensor products, inverse images, cofibers,
and colimits are derived. We write
\(T_R=\mathbf1_{S,R}(1)[2]\) for the Tate suspension object.
Every comparison is considered in its stable enhancement, with its
multiplicative and pullback compatibilities.

\section{Coefficients and reduced torus stages}\label{sec:models}

\subsection{Linear coefficients and the zero-slice algebra}

Throughout this paper, $R$ is an ordinary commutative ring with unit.
Tensor products with $R$, and all subsequent changes of coefficients, are
derived.  The categories $\mathrm{DA}(S,R)$ and $\mathrm{DM}(S,R)$ use the
Nisnevich topology.  Write
\begin{equation}\label{eq:criterion-adjunction}
 F_{S,R}:\mathrm{DA}(S,R)
 \rightleftarrows\mathrm{DM}(S,R):G_{S,R}
\end{equation}
for the adjunction induced by graphs of morphisms.  The left adjoint is
symmetric monoidal.  Write $\mathbf1_{S,R}$ for the unit of $\mathrm{DA}(S,R)$ and
$\mathbf1_R$ for the unit of $\mathrm{DM}(S,R)$. Let
$v_{S,R}:\mathrm{DA}(S,R)\to\mathrm{SH}(S)$ be the
forgetful functor of Lemma~\ref{lem:linear-model}.  Define
$U_{S,R}=v_{S,R}G_{S,R}$.  The motivic Eilenberg--MacLane
spectrum is, by definition,
\[
 HR_S:=U_{S,R}(\mathbf1_R)\quad\text{in }\mathrm{SH}(S).
\]
Its canonical linear lift is $G_{S,R}(\mathbf1_R)$.
We use the same notation $HR_S$ for that specified lift when the
ambient category is $\mathrm{DA}(S,R)$.  The right adjoint
$G_{S,R}$ is lax symmetric monoidal, so in this category
\begin{equation}\label{eq:criterion-transfer-ring}
 HR_S=G_{S,R}(\mathbf1_R)
 \quad\text{belongs to}\quad
 \operatorname{CAlg}(\mathrm{DA}(S,R)).
\end{equation}
For $f:T\to S$, the exchange transformation for this adjunction gives a
canonical map of commutative algebras
\begin{equation}\label{eq:criterion-exchange}
 \tau_{f,R}:f^*HR_S
 \longrightarrow HR_T.
\end{equation}
These maps satisfy the composition law, with its natural higher coherences.
Their existence does not require that they be equivalences.

We distinguish ordinary linear coefficients from the motivic cohomology
algebra in \eqref{eq:criterion-transfer-ring}.  Let
$HR^{\mathrm{top}}_S$ be the image of the ordinary Eilenberg--MacLane
spectrum $HR$ under the canonical symmetric monoidal functor
$\mathrm{Sp}\to\mathrm{SH}(S)$.  The spectrum $HR_S$
defined above is obtained from the transferred unit.  The independent
zero-slice algebra $A_{S,R}$ will be compared with its canonical
linear lift; their equivalence is a conclusion of the comparison
theorem.  In particular the coefficient spectrum $HR^{\mathrm{top}}_S$
and the motivic spectrum $HR_S$ have different definitions.

\begin{lemma}[Linearization and module transitivity]\label{lem:linear-model}
There are natural symmetric monoidal equivalences
\begin{equation}\label{eq:criterion-linear-model}
 \mathrm{DA}(S,R)
 \simeq
 \operatorname{Mod}_{HR^{\mathrm{top}}_S}(\mathrm{SH}(S)).
\end{equation}
If $A$ is a commutative algebra in $\mathrm{DA}(S,\mathbb Z)$ and
$\underline A$ is its underlying algebra in $\mathrm{SH}(S)$, set
$A_R=A\otimes_{\mathbb Z}^{\mathbf L}R$ in $\mathrm{DA}(S,R)$.  Then
\begin{align}
 \underline {A_R}
 &\simeq
 \underline A\otimes_{H\mathbb Z^{\mathrm{top}}_S}
 HR^{\mathrm{top}}_S,\label{eq:criterion-relative-coefficients}\\
 \operatorname{Mod}_{A_R}(\mathrm{DA}(S,R))
 &\simeq
 \operatorname{Mod}_{\underline {A_R}}(\mathrm{SH}(S)).
 \label{eq:criterion-module-transitivity}
\end{align}
All these identifications commute with pullback in $S$.
\end{lemma}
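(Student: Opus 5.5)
The plan is to present both sides as localizations of presheaf categories and compare the linear structures. Write $\mathrm{DA}(S,R)$ as the $\mathbb P^1$-stabilization of the $\mathbb A^1$-invariant Nisnevich-local objects in $\operatorname{Fun}(\mathrm{Sm}_S^{\mathrm{op}},\operatorname{Mod}_{HR})$, where $\operatorname{Mod}_{HR}=D(R)$ is the derived $\infty$-category of $R$. Since $\operatorname{Mod}_{HR}(\mathrm{Sp})\simeq D(R)$ symmetric monoidally, we have
\[\operatorname{Fun}(\mathrm{Sm}_S^{\mathrm{op}},D(R))\simeq\operatorname{Fun}(\mathrm{Sm}_S^{\mathrm{op}},\mathrm{Sp})\otimes\operatorname{Mod}_{HR}\]
in $\mathrm{Pr}^L$. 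Nisnevich localization, $\mathbb A^1$-localization and $\otimes$-inversion of $T$ are compatible with $-\otimes_{\mathrm{Sp}}\operatorname{Mod}_{HR}$. They are generated by sets of maps, and tensoring a presentably symmetric monoidal localization with a presentable stable category localizes at the tensored maps. This gives $\mathrm{DA}(S,R)\simeq\mathrm{SH}(S)\otimes_{\mathrm{Sp}}\operatorname{Mod}_{HR}$.

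The standard identity $\mathcal C\otimes_{\mathrm{Sp}}\operatorname{Mod}_{A}(\mathrm{Sp})\simeq\operatorname{Mod}_{A_{\mathcal C}}(\mathcal C)$ for $\mathcal C$ presentably symmetric monoidal and $A\in\operatorname{CAlg}(\mathrm{Sp})$ is Lurie, HA~4.8.4.6. Applied here it gives \eqref{eq:criterion-linear-model}, with $A_{\mathcal C}=HR^{\mathrm{top}}_S$. The forgetful functor $v_{S,R}$ is then the forgetful functor from modules.

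Naturality in $S$ follows because $f^*$ on $\mathrm{SH}$ is a map in $\operatorname{CAlg}(\mathrm{Pr}^L)$ under $\mathrm{Sp}$. It therefore carries $HR^{\mathrm{top}}_S$ to $HR^{\mathrm{top}}_T$. Moreover, the presheaf-level description of $f^*$ (left Kan extension along $X\mapsto X\times_ST$) commutes with the base change $-\otimes\operatorname{Mod}_{HR}$.

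For \eqref{eq:criterion-relative-coefficients}, derived extension of scalars $\mathrm{DA}(S,\mathbb Z)\to\mathrm{DA}(S,R)$ corresponds under \eqref{eq:criterion-linear-model} to the functor $-\otimes_{H\mathbb Z^{\mathrm{top}}_S}HR^{\mathrm{top}}_S$. Indeed, both are obtained by applying $\mathrm{SH}(S)\otimes_{\mathrm{Sp}}(-)$ to $D(\mathbb Z)\to D(R)$, which is $-\otimes_{H\mathbb Z}HR$. The underlying object of $A\otimes^{\mathbf L}_{\mathbb Z}R$ is therefore $\underline A\otimes_{H\mathbb Z^{\mathrm{top}}_S}HR^{\mathrm{top}}_S$, as a commutative algebra, since the functor is symmetric monoidal.

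For \eqref{eq:criterion-module-transitivity}, use the general transitivity $\operatorname{Mod}_{B}(\operatorname{Mod}_{E}(\mathcal C))\simeq\operatorname{Mod}_{B}(\mathcal C)$ for a commutative $E$-algebra $B$ (HA~3.4.1.9 together with 4.5), with $E=HR^{\mathrm{top}}_S$ and $B=\underline{A_R}$. By construction, $A_R$ viewed in $\operatorname{Mod}_{E}$ is $\underline{A_R}$ with its $E$-algebra structure. Compatibility with pullback again follows because $f^*$ is symmetric monoidal and preserves the relative tensor products, which are geometric realizations of bar constructions, so colimits.

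The main obstacle I expect is the first step: making sure that the paper's model of $\mathrm{DA}(S,R)$, namely $R$-linear complexes of Nisnevich sheaves, stabilized, agrees with the $\mathrm{Pr}^L$-tensor product. I would handle it by working with the $\infty$-categorical presentations throughout. The key input is that the Nisnevich and $\mathbb A^1$-local objects in $D(R)$-valued presheaves are exactly those whose underlying spectrum-valued presheaves are local. This holds because the generating maps are images of maps of representables and $D(R)\to\mathrm{Sp}$ is conservative and preserves limits. The Tate stabilization commutes with $-\otimes\operatorname{Mod}_{HR}$ by Robalo's universal property.
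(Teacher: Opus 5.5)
Your proposal is correct and follows essentially the same route as the paper. Both arguments identify presheaves of complexes with presheaves of $HR$-modules, note that Nisnevich and $\mathbb A^1$-localization and Tate inversion commute with passing to $HR$-modules, obtain the relative coefficients by extension along $H\mathbb Z^{\mathrm{top}}_S\to HR^{\mathrm{top}}_S$, and conclude by module transitivity and symmetric monoidality of $f^*$. Your packaging of the middle step as $\mathrm{SH}(S)\otimes_{\mathrm{Sp}}\operatorname{Mod}_{HR}$ together with HA~4.8.4.6 is a more explicit version of the paper's remark that the localized module category is obtained by imposing the same relations on free modules.
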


\begin{proof}
Before motivic localization, complexes of presheaves of $R$-modules are
equivalent to presheaves of $HR$-module spectra.  The localizations imposing
Nisnevich descent and $\mathbb A^1$-invariance are monoidal.  Passing to
modules over the image of the constant algebra $HR$ commutes with these
localizations: the localized module category is characterized by imposing
the same descent and homotopy relations on free modules.  Inverting the
Tate suspension has the same compatibility with modules, by the universal
property of tensor inversion.  This gives
\eqref{eq:criterion-linear-model}.  This argument uses the Nisnevich
motivic localization; it does not appeal to an equivalence between
motivic spectra and motives with finite transfers.

Extension from $\mathbb Z$ to $R$ is extension along the algebra map
$H\mathbb Z^{\mathrm{top}}_S\to HR^{\mathrm{top}}_S$.  This proves
\eqref{eq:criterion-relative-coefficients}.  The last equivalence is the
transitivity of module categories: an $A_R$-module in
$HR^{\mathrm{top}}_S$-modules is the same as a module over the underlying
algebra $\underline {A_R}$.  The constructions of constant coefficients,
localization, and tensor inversion all commute with pullback.
\end{proof}

We use the following construction of the zero-slice algebra.  The
graded commutative complexes of $\mathbb A^1$-invariant motivic cohomology,
with their first Chern class and Tate bonding maps, give a commutative
algebra
\begin{equation}\label{eq:criterion-A-definition}
 A_S\in\operatorname{CAlg}(\mathrm{DA}(S,\mathbb Z)).
\end{equation}
We use the construction of Bachmann--Elmanto--Morrow
\cite[Remarks~4.28 and~4.37]{BEM}.  Their identification with the zero
slice and their pullback theorem give
\begin{equation}\label{eq:criterion-A-properties}
 \underline A_S\simeq s_0(\mathbf1_S),
 \qquad f^*A_S\simeq A_T
 \quad(f:T\to S);
\end{equation}
see \cite[Theorem~9.3 and Corollary~9.7]{BEM}.  The latter gives
these comparisons as equivalences of $E_\infty$-algebras.  The linear
module description is also made explicit in \cite[Remark~9.11]{BEM}.  In using these statements,
we retain the linear algebra furnished by the complexes, including its
multiplication and its natural pullback maps.  The pullback maps are
equivalences in $\mathrm{DA}$ because their underlying maps in
$\mathrm{SH}$ are equivalences and the forgetful functor in
\eqref{eq:criterion-linear-model} is conservative.

Put
\[
 A_{S,R}=A_S\otimes_{\mathbb Z}^{\mathbf L}R.
\]
Thus $A_{S,R}$ is an absolute family of commutative algebras.  At integral
coefficients, Lemma~\ref{lem:linear-model} identifies
\begin{equation}\label{eq:criterion-zero-slice-modules}
 \operatorname{Mod}_{A_S}(\mathrm{DA}(S,\mathbb Z))
 \simeq
 \operatorname{Mod}_{s_0(\mathbf1_S)}(\mathrm{SH}(S)).
\end{equation}
For general $R$, the left side with $A_{S,R}$ is the $R$-linearization of
this integral module category; explicitly, it is its category of
$R$-module objects, or equivalently its tensor product with $D(R)$ over
$D(\mathbb Z)$.  In particular, its underlying coefficient
algebra is the relative tensor product in
\eqref{eq:criterion-relative-coefficients}.  The ordinary smash product
$s_0(\mathbf1_S)\wedge HR^{\mathrm{top}}_S$ is a different construction
and is not used here.

\subsection{The reduced torus tower}

Let $\mathbb Z^{\mathrm{tr}}_S(X)$ denote the Nisnevich sheaf of finite
relative cycles represented by a smooth $S$-scheme $X$, and forget its
transfers when viewing it as an ordinary sheaf.  For
$X=\mathbb G_{m,S}^r$, let $e_i$ be the endomorphism induced by setting
the $i$th coordinate equal to $1$.  The $e_i$ are commuting idempotents.
Define
\begin{equation}\label{eq:criterion-reduced-cycles}
 C_{S,r,\mathbb Z}
 =\operatorname{im}\left(
     \prod_{i=1}^r(1-e_i):
     \mathbb Z^{\mathrm{tr}}_S(\mathbb G_m^r)
     \longrightarrow
     \mathbb Z^{\mathrm{tr}}_S(\mathbb G_m^r)
   \right),
 \qquad
 C_{S,r,R}=C_{S,r,\mathbb Z}\otimes_{\mathbb Z}^{\mathbf L}R.
\end{equation}
At $r=0$ the empty product is the identity.  The integral object is placed
in degree zero before derived extension of coefficients.  As in \cite[Paragraph~10.0.1]{CD}, our
correspondence category with arbitrary coefficients is obtained by
$R$-linear extension of the integral correspondence category.  For
coefficient localizations $\Lambda\subset\mathbb Q$, this agrees
with the category defined by $\Lambda$-universal relative cycles:
\cite[Remark~9.1.3(3)]{CD} identifies
$c_S(X,Y)_{\mathbb Z}\otimes\Lambda$ with
$c_S(X,Y)_\Lambda$.  Equivalently,
one may take the total reduced summand in the cubical diagram of tori.
Because the projectors split, this reduced summand commutes with every
derived extension of coefficients.

Write $L_{\mathrm{Nis},\mathbb A^1}$ for effective motivic localization
of complexes without transfers and $\Sigma_T^\infty$ for Tate
stabilization.  Our convention is
$T_R=\mathbf1_{S,R}(1)[2]$.  Define the normalized stages
\begin{equation}\label{eq:criterion-stages}
 E_{S,r,R}
 =\Sigma_T^\infty
   L_{\mathrm{Nis},\mathbb A^1}(C_{S,r,R}[-r])(-r).
\end{equation}
Exterior product of cycles and the graph of the coordinate on
$\mathbb G_m$ provide Tate bonding maps
\begin{equation}\label{eq:criterion-bonds}
 b_{S,r,R}:E_{S,r,R}\longrightarrow E_{S,r+1,R}.
\end{equation}
The shifts in \eqref{eq:criterion-stages} are fixed by the convention on
$T_R$: the unnormalized $r$th Tate level is $C_{S,r,R}[r]$, and
tensoring its suspension spectrum with $T_R^{-r}$ gives
$\Sigma_T^\infty L_{\mathrm{Nis},\mathbb A^1}(C_{S,r,R})(-r)[-r]$.

The graph map from reduced ordinary linear representables to reduced
cycles gives
\begin{equation}\label{eq:criterion-normalized-graph}
 g_{S,r,R}:\mathbf1_{S,R}\longrightarrow E_{S,r,R}.
\end{equation}
Here the source is the normalization of
$\widetilde R[(\mathbb G_m,1)^{\wedge r}]$, whose stabilized image is
$\mathbf1_{S,R}(r)[r]$.  These maps satisfy
$b_{S,r,R}g_{S,r,R}=g_{S,r+1,R}$ coherently.  For every $f:T\to S$ there
are also natural exchange maps
\begin{equation}\label{eq:criterion-stage-exchange}
 \beta_{f,r,R}:f^*E_{S,r,R}\longrightarrow E_{T,r,R}.
\end{equation}
They commute with bonding maps, graph maps, exterior products, and
composition of pullbacks.  All inverse images in this formula are derived;
no flatness of $f$ or of $R$ is assumed.

\begin{proposition}[Presentation of the transfer algebra]
\label{prop:cycle-tower-presentation}
There is a natural equivalence
\begin{equation}\label{eq:criterion-cycle-colimit}
 \operatorname*{colim}_{r\geq0}E_{S,r,R}
 \simeq HR_S.
\end{equation}
Its commutative multiplication is induced by exterior products of cycles,
and its unit is induced by the maps $g_{S,r,R}$.  Under this presentation,
the colimit of \eqref{eq:criterion-stage-exchange} is
$\tau_{f,R}$.  Moreover, for every ring homomorphism $R\to R'$,
\begin{equation}\label{eq:criterion-cycle-scalars}
 E_{S,r,R}\otimes_R^{\mathbf L}R'\simeq E_{S,r,R'},
 \qquad
 HR_S\otimes_R^{\mathbf L}R'
 \simeq HR'_S.
\end{equation}
These equivalences respect the stated maps and algebra structures.
\end{proposition}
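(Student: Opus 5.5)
The plan is to compute \(HR_S=U_{S,R}(\mathbf1_R)\) by writing \(\mathbf1_R\) in \(\mathrm{DM}(S,R)\) as a Tate spectrum and applying \(G_{S,R}\) levelwise. In the effective category with transfers, the \(r\)th Tate level of the unit spectrum is represented by \(L_{\mathrm{Nis},\mathbb A^1}^{\mathrm{tr}}(C_{S,r,R})[r]\). Here \(C_{S,r,R}\) is the total reduced summand of \(R^{\mathrm{tr}}_S(\mathbb G_m^r)\), which splits because the \(e_i\) are commuting idempotents. The bonding maps are exterior product with the graph of the coordinate on \(\mathbb G_m\), i.e.\ the map \(\mathbf1_R(r)[r]\otimes\mathbf1_R(1)[1]\to\mathbf1_R(r+1)[r+1]\).

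\emph{Colimit presentation.} I will use the standard formula for a symmetric \(T\)-spectrum: an object \(M=(M_r)\) is the colimit \(\operatorname{colim}_r\Sigma_T^\infty M_r(-r)\) of its shifted desuspended levels. Applying this to \(\mathbf1_R\) and using that \(G_{S,R}\) commutes with \(\Sigma_T^\infty\), with twists, and with filtered colimits gives a colimit of levelwise forgetful images. The point needing justification is that, for the effective level, the transfer-side localization \(L^{\mathrm{tr}}_{\mathrm{Nis},\mathbb A^1}\) of \(C_{S,r,R}\) and the localization \(L_{\mathrm{Nis},\mathbb A^1}\) of its underlying sheaf can be compared after stabilization. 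I plan to avoid any strictly \(\mathbb A^1\)-invariance result with transfers, since the base is not a perfect field. Instead I will argue formally: \(G\) commutes with colimits because \(F\) preserves compact generators, and the colimit over \(r\) of \(\Sigma_T^\infty L_{\mathrm{Nis},\mathbb A^1}(C_{S,r,R})(-r)[-r]\) is already \(T\)-local and Nisnevich–\(\mathbb A^1\) local as a spectrum object. Concretely, the family \((L_{\mathrm{Nis},\mathbb A^1}C_{S,r,R}[r])_r\) is a prespectrum without transfers. Its spectrification computes \(G\) applied to the spectrification with transfers, because the forgetful functor on sheaves with transfers preserves Nisnevich local equivalences and \(\mathbb A^1\)-homotopies (via \(\mathbb A^1\)-homotopy of correspondences). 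I expect this identification, namely that forgetting transfers commutes with the \(\Omega_T\)-spectrification on these cycle prespectra, to be the main obstacle. I would first try the Suslin complex model \(C_*\), which is an explicit \(\mathbb A^1\)-homotopy-invariant replacement compatible with both localizations. Nisnevich sheafification of the \(C_*\) model then commutes with forgetting transfers, and the stabilization is handled by the colimit formula. Hence \eqref{eq:criterion-cycle-colimit}.

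\emph{Structures.} The multiplication on \(\mathbf1_R\) as a spectrum is given levelwise by the exterior product \(C_{S,r}\otimes C_{S,s}\to C_{S,r+s}\), since exterior products of reduced cycles stay reduced. The lax monoidal structure of \(G\) transports this, so the algebra structure on the colimit is induced by exterior products. The unit \(\mathbf1_{S,R}\to HR_S\) is the adjunction unit, which levelwise is the graph map from \(\widetilde R[(\mathbb G_m,1)^{\wedge r}]\); this gives the \(g_{S,r,R}\), compatibly with the bonds. For the exchange maps, \(f^*\) commutes with colimits, \(\Sigma_T^\infty\) and twists. The map \(\tau_{f,R}\) is the mate of \(f^*_{\mathrm{tr}}\mathbf1_R\simeq\mathbf1_R\), and levelwise this mate is the map induced by pullback of relative cycles, \(f^*C_{S,r}\to C_{T,r}\), namely \(\beta_{f,r,R}\). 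Naturality of mates then gives that the colimit of the \(\beta_{f,r,R}\) is \(\tau_{f,R}\).

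\emph{Scalars.} By the convention that correspondences with \(R\)-coefficients are the \(R\)-linear extension of the integral ones, we have \(C_{S,r,R'}\simeq C_{S,r,R}\otimes_R^{\mathbf L}R'\). The functors \(L_{\mathrm{Nis},\mathbb A^1}\), \(\Sigma_T^\infty\) and twists are \(D(R)\)-linear and commute with colimits, so they commute with \(-\otimes^{\mathbf L}_RR'\). This gives the first equivalence in \eqref{eq:criterion-cycle-scalars}. Passing to the colimit and using \eqref{eq:criterion-cycle-colimit} for both \(R\) and \(R'\) yields the second. Compatibility with bonds, graphs and products is clear levelwise, since all of them are defined integrally and then base-changed.
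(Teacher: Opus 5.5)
Your route is essentially the paper's. You identify \(G_{S,R}(\mathbf1_R)\) with the restricted transferred unit Tate spectrum, using that forgetting transfers preserves the relevant local equivalences and commutes with sequential \(\Omega_T\)-spectrification: it preserves filtered colimits, and it commutes with \(\Omega_T\) by adjunction since \(F\) sends \(T\) to the transferred Tate object. You then apply the sequential colimit formula and handle scalars by linearity of the constructions.

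One correction: your opening claim that \(G_{S,R}\) commutes with \(\Sigma_T^\infty\) is false. Already for \(r=0\), \(G_{S,R}\Sigma_T^\infty\) of the effective transferred unit is \(HR_S\), whereas \(\Sigma_T^\infty\) of its forgetful image is \(\mathbf1_{S,R}\), and these differ in general. The term-by-term identification must therefore be replaced by commutation with spectrification of the whole prespectrum, which is what you go on to do.
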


\begin{proof}
The unit Tate spectrum with transfers has $r$th level represented by the
reduced transferred Tate power $C_{S,r,R}[r]$.  Restricting this spectrum
along the graph functor gives the cycle Tate spectrum without transfers.
The comparison with the derived right adjoint is a substantive feature of
this model: the forgetful functor on Tate spectra preserves stable
$\mathbb A^1$-equivalences
\cite[Proposition~6.3.9, Paragraph~11.2.16, and Definition~11.2.17]{CD}.  Consequently
the restricted unit spectrum represents $G_{S,R}(\mathbf1_R)$; one does
not need to replace forgetting transfers by an unjustified interchange
with a right adjoint to stabilization.

For the underlying-object calculation, use sequential Tate spectra.
The effective forgetful right adjoint preserves filtered colimits:
its left adjoint sends the compact smooth generators to compact smooth
generators.  By adjunction it also commutes with the Tate loop functor,
since its left adjoint sends the Tate object to the transferred Tate
object.  It therefore commutes with sequential spectrification.  This
identifies the sequential cycle spectrum with the derived restriction of
the transferred unit spectrum.  Sequential spectrum reconstruction now
expresses its image in the stabilized category as the colimit of its
normalized suspended levels.  Applied to $C_{S,r,R}[r]$, this is
\eqref{eq:criterion-cycle-colimit}.  Thus no reconstruction assertion for
arbitrary nonsemistable raw symmetric spectra is needed.  The
commutative multiplication is retained from the symmetric model above.
Exterior products, graph units, and canonical exchange maps are maps of
these models, so the identifications retain their multiplicative and
functorial structures.

Effective localization, stabilization, and normalized suspension are
colimit-preserving linear functors.  The first equivalence in
\eqref{eq:criterion-cycle-scalars} therefore follows from the definition
of $C_{S,r,R}$.  Taking colimits gives the second equivalence.  In
particular, this coefficient compatibility follows from the explicit
cycle model; it is not an assumed projection formula for $G_{S,R}$.
\end{proof}

\begin{remark}\label{rem:criterion-complex-not-space}
The object in \eqref{eq:criterion-stages} is the localization of a
\emph{complex of linear sheaves}.  It is not the free linear motive of
the underlying space of $C_{S,r,R}$.  These two constructions have
different coefficient behavior.  A comparison of underlying cycle spaces
must be passed through a justified linearization argument before it can
be used for this tower.  Likewise, none of the individual stages is
identified with the right adjoint applied to an individual transferred
Tate motive.
\end{remark}

\section{The forward moving-tower criterion}\label{sec:criterion}

The comparison separates into two assertions: the identification of
the coefficient algebra over an arithmetic base, and its compatibility
with structural pullback. We formulate this reduction in terms of
the cycle stages of Section~\ref{sec:models}. The criterion includes
both vanishing of the assembled errors and the stronger condition
that sufficiently long finite transitions vanish.

Throughout this section, \(B\) is the spectrum of a localization of
\(\mathbb Z\), and \(R\) is an arbitrary commutative ring. All
schemes over \(B\) under consideration are noetherian and finite
dimensional. Write \(p_S:S\to B\) for the structural morphism.
No relation between the localization defining \(B\) and the
coefficient ring \(R\) is assumed.

\newpage
\subsection{Presentations and comparison errors}

\begin{definition}[Multiplicative presentation]\label{def:forward-presentation}
A multiplicative presentation of \(A_{B,R}\) is a coherent
sequential diagram \(D_{B,\bullet,R}\), with bonding maps
\(d_r:D_{B,r,R}\to D_{B,r+1,R}\), and an equivalence
\begin{equation}\label{eq:forward-presentation}
 \delta:\operatorname*{colim}_{r\geq0}D_{B,r,R}
       \xrightarrow{\simeq}A_{B,R}.
\end{equation}
Its coherent indexed products and units equip the colimit with a
commutative algebra structure, and \(\delta\) is required to
respect that structure. In particular, the normalized levels of a
multiplicative Tate prespectrum give such a presentation. The
individual objects \(D_{B,r,R}\) need not be commutative algebras.
\end{definition}

\begin{definition}[Forward and structural errors]\label{def:forward-errors}
Fix a multiplicative presentation and a coherent map of towers
\[
 q_{r,R}:D_{B,r,R}\longrightarrow E_{B,r,R}
\]
whose assembly is a specified unital commutative-algebra map
\begin{equation}\label{eq:forward-anchor}
 \nu_{B,R}:A_{B,R}\longrightarrow HR_B.
\end{equation}
Define the forward comparison errors and the structural errors by
\begin{align}
 \mathcal U_{r,R}
   &=\operatorname{cofib}(q_{r,R}),
       \label{eq:forward-errors}\\
 \mathcal F_{S,r,R}
   &=\operatorname{cofib}\left(
       \beta_{p_S,r,R}:p_S^*E_{B,r,R}\longrightarrow E_{S,r,R}
      \right).
       \label{eq:forward-structural-errors}
\end{align}
Their transitions are induced by the given bonding maps and the
canonical exchange squares. Thus they are coherent sequential
diagrams in \(\mathrm{DA}(B,R)\) and \(\mathrm{DA}(S,R)\),
respectively.
\end{definition}

Exactness of colimits and Proposition~\ref{prop:cycle-tower-presentation}
identify their assemblies as
\begin{equation}\label{eq:forward-error-colimits}
 \operatorname*{colim}_r\mathcal U_{r,R}
       \simeq\operatorname{cofib}(\nu_{B,R}),
 \qquad
 \operatorname*{colim}_r\mathcal F_{S,r,R}
       \simeq\operatorname{cofib}(\tau_{p_S,R}).
\end{equation}

\begin{lemma}[Vanishing by finite transitions]\label{lem:telescope-zero}
Let \(Z_0\to Z_1\to\cdots\) be a coherent sequential diagram
in a stable infinity-category admitting countable coproducts. If
for every \(r\) there is an \(m>r\) for which the transition
\(Z_r\to Z_m\) is zero in the homotopy category, then
\(\operatorname*{colim}_rZ_r=0\).
\end{lemma}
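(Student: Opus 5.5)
The plan is to show that the identity of the colimit is null, using only the universal property of the sequential colimit and the hypothesis on finite transitions. We work with the colimit as a mapping telescope. Put \(Z=\operatorname*{colim}_rZ_r\), write \(\iota_r:Z_r\to Z\) for the canonical maps, and write \(t_{r,m}:Z_r\to Z_m\) for the transitions. Since the category is stable and has countable coproducts, \(Z\) is the cofiber of the map
\[
 1-\mathrm{sh}:\textstyle\bigoplus_r Z_r\longrightarrow\bigoplus_r Z_r .
\]
Here \(\mathrm{sh}\) is assembled from the bonding maps.

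First we check that each \(\iota_r\) is zero in the homotopy category. The structure maps satisfy \(\iota_r\simeq\iota_m\circ t_{r,m}\) for \(m>r\). Choosing \(m\) with \(t_{r,m}=0\) in the homotopy category gives \(\iota_r=0\). Consequently the map \(\bigoplus_r Z_r\to Z\) induced by the \(\iota_r\) is zero, since a map out of a coproduct is zero precisely when each of its components is.

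We then show that \(Z\) itself vanishes. Homotopy classes of maps out of \(Z\) are not simply compatible families of maps out of the \(Z_r\); there is a \(\lim^1\) contribution. To handle it, we apply \(\pi_0\operatorname{Map}(-,W)\) to the cofiber sequence above, for an arbitrary object \(W\). This gives the Milnor exact sequence
\[
 0\to{\lim_r}^1[Z_r[1],W]\to[Z,W]\to\lim_r[Z_r,W]\to0 .
\]
The hypothesis says the inverse system \(\{[Z_r,W]\}_r\), and likewise its shifted versions, is Mittag-Leffler in the strongest sense. For every \(r\) there is \(m>r\) such that the map \([Z_m,W]\to[Z_r,W]\) is zero. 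An inverse system of abelian groups with this property (it is pro-zero) has vanishing \(\lim\) and vanishing \(\lim^1\). The reason is that the system is pro-isomorphic to zero, and \(\lim\) and \(\lim^1\) are invariants of pro-isomorphism; alternatively one can verify this directly, since the images stabilize at zero. Hence \([Z,W]=0\) for all \(W\). Taking \(W=Z\) shows that \(\mathrm{id}_Z=0\), so \(Z\simeq0\).

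The main point needing care is the \(\lim^1\) term. The first step alone only shows that each \(\iota_r\) is null, not that \(\mathrm{id}_Z\) is. The pro-zero argument is what closes this gap.

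We also expect a more hands-on alternative. One could pass to a subsequence \(r_0<r_1<\cdots\) whose successive transitions \(t_{r_k,r_{k+1}}\) are zero; cofinality does not change the colimit. The colimit is then the cofiber of \(1-\mathrm{sh}\) on \(\bigoplus_k Z_{r_k}\), and in this situation \(\mathrm{sh}\) is zero in the homotopy category. So the map \(1-\mathrm{sh}\) is homotopic to the identity, hence an equivalence, and its cofiber vanishes. This version avoids \(\lim^1\) entirely. The only point to check is that cofinal restriction of a coherent sequential diagram preserves the colimit, which is standard. We would present this second argument as the proof, because it is shorter.
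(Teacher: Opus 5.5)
Your proposal is correct. The argument you would present is essentially the paper's own proof: pass to a cofinal subsequence whose successive transitions are zero, observe that the shift on \(\bigoplus_k Z_{r_k}\) is then zero in the homotopy category, and conclude that \(1-\mathrm{sh}\) is an equivalence with vanishing cofiber. Your first argument, via the Milnor sequence and the vanishing of \(\lim\) and \(\lim^1\) for a pro-zero system, is also valid but is not needed.
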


\begin{proof}
Choose an increasing cofinal sequence \(r_0<r_1<\cdots\) whose
successive transition maps are zero in the homotopy category. The
colimit is the cofiber of
\[
 1-s:\bigoplus_{j\geq0}Z_{r_j}
       \longrightarrow\bigoplus_{j\geq0}Z_{r_j},
\]
where \(s\) restricts on each summand to its transition into the
next summand. The coproduct universal property gives \(s=0\) in
the homotopy category. Consequently \(1-s\) is an equivalence,
and its cofiber vanishes. Cofinality identifies this colimit with
the original one. The argument requires a coherent original
diagram, but no compatible choice of nullhomotopies.
\end{proof}

\subsection{Reconstruction and continuity}

For every \(S/B\), the chosen anchor defines
\begin{equation}\label{eq:forward-kappa}
 \kappa_{S,R}:A_{S,R}\simeq p_S^*A_{B,R}
   \xrightarrow{p_S^*\nu_{B,R}}p_S^*HR_B
   \xrightarrow{\tau_{p_S,R}}HR_S.
\end{equation}
This is a map of commutative algebras even before either comparison
is known to be invertible. Absoluteness of \(A\) and the exchange
cocycle give, coherently for every \(f:T\to S\),
\begin{equation}\label{eq:forward-kappa-naturality}
 \tau_{f,R}\circ f^*\kappa_{S,R}=\kappa_{T,R}.
\end{equation}
The adjoint algebra map \(F_{S,R}(A_{S,R})\to\mathbf1_R\)
therefore defines a colimit-preserving symmetric monoidal functor
\begin{equation}\label{eq:forward-module-functor}
 \Theta_{S,R}^{\kappa}(M)=
 \mathbf1_R\otimes_{F_{S,R}(A_{S,R})}F_{S,R}(M).
\end{equation}
This family commutes with pullback, and on free modules it satisfies
\begin{equation}\label{eq:forward-free-modules}
 \Theta_{S,R}^{\kappa}(A_{S,R}\otimes M)=F_{S,R}(M).
\end{equation}
It also commutes with smooth pushforward: this follows on free
modules from absoluteness of \(A\) and the smooth projection
formula, and then on all modules by colimits.

\begin{proposition}[Reconstruction on finite-type bases]
\label{prop:forward-finite-type}
Suppose that \(\nu_{B,R}\) is an equivalence and that
\(\tau_{p_S,R}\) is an equivalence for every finite-type
\(B\)-scheme \(S\). Then \(\kappa_{S,R}\) and
\(\Theta_{S,R}^{\kappa}\) are equivalences for every such
\(S\). All canonical coefficient exchanges between finite-type
\(B\)-schemes are equivalences.
\end{proposition}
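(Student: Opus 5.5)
The algebra statement is immediate from the definitions. By \eqref{eq:forward-kappa}, \(\kappa_{S,R}\) is the composite of three maps: the absoluteness equivalence \(A_{S,R}\simeq p_S^*A_{B,R}\) from \eqref{eq:criterion-A-properties}, tensored with \(R\); the map \(p_S^*\nu_{B,R}\); and \(\tau_{p_S,R}\). The second and third are equivalences by hypothesis, so \(\kappa_{S,R}\) is an equivalence of commutative algebras. Now let \(f:T\to S\) be a morphism of finite-type \(B\)-schemes. Then \(p_T=p_S\circ f\), and the cocycle identity \eqref{eq:forward-kappa-naturality} gives \(\tau_{f,R}\circ f^*\kappa_{S,R}=\kappa_{T,R}\). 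Both \(f^*\kappa_{S,R}\) and \(\kappa_{T,R}\) are equivalences, so \(\tau_{f,R}\) is one too, by two-out-of-three. This proves the last sentence of the statement.

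For the module functor, first reduce to the case where the anchor is the identity. Once \(\kappa_{S,R}\) is an equivalence, the adjoint map \(F_{S,R}(A_{S,R})\to\mathbf1_R\) is identified with the counit \(F_{S,R}G_{S,R}(\mathbf1_R)\to\mathbf1_R\). Hence \(\Theta^{\kappa}_{S,R}\) is the standard comparison functor
\[
\operatorname{Mod}_{HR_S}(\mathrm{DA}(S,R))\to\mathrm{DM}(S,R),
\]
precomposed with the restriction equivalence along \(\kappa_{S,R}\).

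I would then apply the reconstruction criterion \cite[Proposition~11.4.7]{CD}, or equivalently a Barr--Beck--Lurie argument for the adjunction \(F_{S,R}\dashv G_{S,R}\). Several inputs are needed.

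\begin{enumerate}[label=\textup{(\alph*)}]
\item \(G_{S,R}\) preserves colimits. This holds because \(F_{S,R}\) sends the compact generators \(M(X)(n)\), for \(X\in\mathrm{Sm}_S\), to compact generators.
\item \(\mathrm{DM}(S,R)\) is generated under colimits by the image of \(F_{S,R}\).
\item The projection formula \(G_{S,R}(F_{S,R}(M))\simeq HR_S\otimes M\) holds for all \(M\).
\end{enumerate}

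Given (a)--(c), the monad \(G_{S,R}F_{S,R}\) is \(HR_S\otimes(-)\), and the monadicity theorem yields the equivalence.

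The main obstacle is (c). Both sides commute with colimits in \(M\), so it suffices to check it on the generators \(M=p_{X\sharp}\mathbf1_{X,R}(n)\) with \(p_X:X\to S\) smooth. For such generators, \(G_{S,R}F_{S,R}(p_{X\sharp}\mathbf1(n))\simeq p_{X\sharp}G_{X,R}(\mathbf1_R)(n)=p_{X\sharp}HR_X(n)\). This uses that \(F\) and \(G\) commute with \(p_{X\sharp}\) for smooth \(p_X\), which is a standard property of the transfer adjunction. On the other hand, the smooth projection formula gives \(HR_S\otimes p_{X\sharp}\mathbf1(n)\simeq p_{X\sharp}(p_X^*HR_S)(n)\). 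The two sides therefore agree precisely when \(\tau_{p_X,R}:p_X^*HR_S\to HR_X\) is an equivalence. This is the step where the hypothesis on all finite-type \(B\)-schemes is essential, and it has just been verified for \(X\), which is of finite type over \(B\).

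The same comparison can be phrased through \(\Theta^\kappa\) directly. By \eqref{eq:forward-free-modules}, \(\Theta^{\kappa}\) sends the free modules \(A_{S,R}\otimes p_{X\sharp}\mathbf1(n)\) to the generators \(M(X)(n)\). Full faithfulness on these generators then reduces, via compatibility with smooth pushforward and the computation above, to the equivalence \(\tau_{p_X,R}\). Since both categories are compactly generated by these objects and \(\Theta^\kappa\) preserves colimits and sends generators to generators, \(\Theta^\kappa_{S,R}\) is an equivalence.
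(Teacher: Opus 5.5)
Your first step matches the paper: \(\kappa_{S,R}\) is a composite of equivalences, and then \(\tau_{f,R}\) follows by two-out-of-three from \eqref{eq:forward-kappa-naturality}. So does your identification of \(\Theta^{\kappa}_{S,R}\) with the standard functor out of \(\operatorname{Mod}_{HR_S}(\mathrm{DA}(S,R))\). The Barr--Beck framing is also fine: (a) and (b) give \(\mathrm{DM}(S,R)\simeq\operatorname{Mod}_{G_{S,R}F_{S,R}}(\mathrm{DA}(S,R))\), so everything rests on (c).

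\textbf{The gap is your verification of (c).} The step \(G_{S,R}F_{S,R}(p_{X\sharp}\mathbf1(n))\simeq p_{X\sharp}G_{X,R}(\mathbf1_R)(n)\) needs \(G_{S,R}\,p_{X\sharp}\simeq p_{X\sharp}\,G_{X,R}\) on units. This is not a standard property of the transfer adjunction. By adjunction, \(G\) commutes with every \(f_*\) and with \(p^*\) for smooth \(p\), but not formally with \(p_\sharp\). It already fails for presheaves. Take \(S=\operatorname{Spec}k\) and \(X=\operatorname{Spec}L\), with \(L/k\) a nontrivial finite separable extension. The left Kan extension \(p_\sharp G(\mathbb Z^{\mathrm{tr}}_X(X))\) vanishes on \(\operatorname{Spec}k\), since \(\operatorname{Spec}k\) admits no \(k\)-map to anything over \(\operatorname{Spec}L\). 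Yet \(c_k(\operatorname{Spec}k,\operatorname{Spec}L)\cong\mathbb Z\). Given the smooth exchange, this commutation on units is \emph{equivalent} to the projection formula (c) on generators, i.e.\ to the module comparison you are trying to prove. The argument is therefore circular at its only nontrivial step.

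There is a clear symptom of this. For smooth \(p_X\), the map \(\tau_{p_X,R}\) is the elementary smooth exchange, which holds unconditionally because \(F\) commutes with \(p_{X\sharp}\). Your argument therefore never uses the hypothesis of the proposition; the remark that \(X\) is of finite type over \(B\) does no work. As written, it would give \(\operatorname{Mod}_{HR_S}(\mathrm{DA}(S,R))\simeq\mathrm{DM}(S,R)\) over every base, with no residue-characteristic condition on \(R\).

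The paper instead invokes the Cisinski--D\'eglise reconstruction criterion [CD, Proposition~11.4.7] as a black box, applied on the full category of finite-type \(B\)-schemes. Its hypothesis is coefficient exchange for \emph{all} morphisms in that category, non-smooth ones included, which is exactly what your first paragraph supplies. The content of that criterion is what replaces your unproved commutation. To repair the proposal, either cite the criterion with that hypothesis, as the paper does, or give a real proof that \(G_{S,R}\) commutes with \(p_{X\sharp}\) on units, using the non-smooth exchanges.
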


\begin{proof}
Formula~\eqref{eq:forward-kappa} makes \(\kappa_{S,R}\) an
equivalence on every finite-type \(B\)-scheme. The naturality
identity~\eqref{eq:forward-kappa-naturality} then proves that
\(\tau_{f,R}\) is an equivalence for every morphism between
schemes in this family.

Apply \cite[Proposition~11.4.7]{CD} on the full category of
finite-type \(B\)-schemes. The hypothesis of that criterion is
the canonical coefficient exchange just established throughout
this category. It gives
\[
 \operatorname{Mod}_{HR_S}(\mathrm{SH}(S))
       \simeq\mathrm{DM}(S,R).
\]
Lemma~\ref{lem:linear-model} and \(\kappa_{S,R}\) identify the
source with \(\operatorname{Mod}_{A_{S,R}}(\mathrm{DA}(S,R))\).
Under this identification the functor is precisely
\(\Theta_{S,R}^{\kappa}\), defined by the graph adjunction and
extension of scalars in~\eqref{eq:forward-module-functor}.

The functors already exist on stable enhancements. Full faithfulness
on the homotopy categories in every shift identifies all homotopy
groups of their mapping spectra, so the cited comparison gives an
equivalence of these enhancements. Its symmetric monoidal structure
is the one in~\eqref{eq:forward-module-functor}.
\end{proof}

\begin{lemma}[Continuity of the comparison]\label{lem:forward-continuity}
If \(\Theta_{S,R}^{\kappa}\) is an equivalence for every
finite-type \(B\)-scheme \(S\), then it is an equivalence for
every noetherian finite-dimensional \(B\)-scheme. On all these
bases, \(\kappa_{S,R}\) and every canonical exchange
\(\tau_{f,R}\) are equivalences as well.
\end{lemma}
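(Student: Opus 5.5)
The plan is to write an arbitrary noetherian finite-dimensional \(B\)-scheme \(S\) as a limit of finite-type \(B\)-schemes and pass the finite-type equivalences to the limit by continuity of both \(\mathrm{DA}\) and \(\mathrm{DM}\). First reduce to the affine case. Both sides of \(\Theta^{\kappa}_{S,R}\) satisfy Nisnevich (indeed Zariski) descent in \(S\), and \(\Theta^{\kappa}\) commutes with pullback and with smooth pushforward. So \(\Theta^{\kappa}_{S,R}\) is the limit, over a finite affine Zariski cover and its intersections, of the functors \(\Theta^{\kappa}_{U,R}\). A limit of equivalences is an equivalence, so it suffices to treat affine (or quasi-compact separated) \(S\). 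For affine \(S=\operatorname{Spec}A\), write \(A\) as the filtered colimit of its finitely generated \(B\)-subalgebras. Then \(S=\lim_\alpha S_\alpha\) with \(S_\alpha\) affine of finite type over \(B\) and affine transition maps. No new residue characteristics appear, since everything lives over \(B\).

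Next invoke continuity. For \(\mathrm{DA}\) we use \(\mathrm{DA}(S,R)\simeq\operatorname{Mod}_{HR^{\mathrm{top}}_S}(\mathrm{SH}(S))\) (Lemma~\ref{lem:linear-model}) together with continuity of \(\mathrm{SH}\) along limits with affine transitions. This gives \(\mathrm{DA}(S,R)\simeq\operatorname{colim}_\alpha\mathrm{DA}(S_\alpha,R)\) in presentable categories, with compact generators given by pullbacks of compact objects. Because \(A_{S,R}\simeq f_\alpha^*A_{S_\alpha,R}\) by absoluteness \eqref{eq:criterion-A-properties}, module categories also pass to the colimit:
\[
\operatorname{Mod}_{A_{S,R}}(\mathrm{DA}(S,R))\simeq\operatorname*{colim}_\alpha\operatorname{Mod}_{A_{S_\alpha,R}}(\mathrm{DA}(S_\alpha,R)).
\]
For \(\mathrm{DM}\) we use the continuity of Cisinski--D\'eglise for motives with finite correspondences, which holds for noetherian finite-dimensional limits with affine transitions: \(\mathrm{DM}(S,R)\simeq\operatorname{colim}_\alpha\mathrm{DM}(S_\alpha,R)\). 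Concretely, mapping spectra between compact generators \(M_S(X)(n)\) are the colimits of the corresponding mapping spectra over \(S_\alpha\), after spreading out a smooth \(X\) to some \(S_\alpha\). Since \(\Theta^{\kappa}\) commutes with pullback, \(\Theta^{\kappa}_{S,R}\) is the colimit of the equivalences \(\Theta^{\kappa}_{S_\alpha,R}\) supplied by the hypothesis, hence an equivalence.

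It remains to get \(\kappa\) and \(\tau\). By \eqref{eq:forward-free-modules}, \(\Theta^{\kappa}_{S,R}(A_{S,R})=\mathbf1_R\). An equivalence of this form identifies its inverse composed with the forgetful functor as \(G_{S,R}\) up to the module structure. Hence the adjoint map \(A_{S,R}\to G_{S,R}(\mathbf1_R)=HR_S\), which is \(\kappa_{S,R}\), is an equivalence. For \(f:T\to S\) between such bases, the naturality \eqref{eq:forward-kappa-naturality} gives \(\tau_{f,R}\circ f^*\kappa_{S,R}=\kappa_{T,R}\) with both \(\kappa\)'s invertible, so \(\tau_{f,R}\) is an equivalence.

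The main obstacle is the continuity statement for \(\mathrm{DM}\) on singular and nonreduced bases without excellence. One needs that finite correspondences and their Nisnevich \(\mathbb A^1\)-localized mapping complexes commute with the limit \(S=\lim S_\alpha\). I would first try the Cisinski--D\'eglise continuity result for \(\mathrm{DM}\) in its \(R\)-linearly extended form, which relies on relative cycles spreading out over noetherian limits. Failing that, I would check compact generators directly. The groups \(c_S(X,Y)\) are filtered colimits of \(c_{S_\alpha}(X_\alpha,Y_\alpha)\) by the limit properties of universal relative cycles, and Nisnevich cohomology commutes with such limits on noetherian finite-dimensional schemes, with uniform cohomological dimension bounded by the Krull dimension.
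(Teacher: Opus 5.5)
Your proposal is correct and is essentially the paper's argument. You approximate an affine base by its finitely generated $\mathcal O(B)$-subalgebras; their transitions are affine and dominant, which is the setting of the Cisinski--D\'eglise continuity the paper cites. You then use continuity of $\mathrm{DA}$ and $\mathrm{DM}$ with absoluteness of $A$ on the module side, globalize over a finite affine cover, and read off $\kappa_{S,R}$ as the adjunction unit on the free rank-one module and $\tau_{f,R}$ from \eqref{eq:forward-kappa-naturality}.

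The differences are only of packaging. You phrase continuity as a colimit in presentable categories and globalize first by Zariski descent. The paper instead checks mapping spectra of free compact modules directly, writing $A_{S,R}$ as a filtered colimit of compact objects, and globalizes afterwards by joint conservativity of restriction to the affine opens, which sidesteps non-affine intersections.
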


\begin{proof}
First let \(S=\operatorname{Spec}D\) be affine. Express \(D\)
as the filtered union of its finitely generated
\(\mathcal O(B)\)-subalgebras \(D_i\). The schemes
\(S_i=\operatorname{Spec}D_i\) are finite type over \(B\),
their transition maps are affine and dominant, and their inverse
limit is \(S\).

The categories \(\mathrm{DA}(-,R)\) and \(\mathrm{DM}(-,R)\)
are compactly generated by smooth Tate objects and satisfy weak
continuity along these systems
\cite[Proposition~4.3.4, Example~6.1.13, Theorems~11.1.13
and~11.1.24, and Example~11.1.25]{CD}. These assertions hold for
arbitrary commutative \(R\). Their mapping-spectrum formulation
follows by applying weak continuity in every shift. The module
category is compactly generated by the free \(A_{S,R}\)-modules
on the same compact generators, since its forgetful functor is
conservative and preserves filtered colimits.

Here is the additional continuity calculation for modules. Descend
compact objects \(M,N\) to a common stage \(S_{i_0}\), and
write \(A_{S_{i_0},R}=\operatorname*{colim}_j a_j\) as a
filtered colimit of compact objects. Absoluteness gives
\(A_{S,R}=\operatorname*{colim}_j a_{j,S}\). Hence
\begin{align*}
 &\operatorname{Map}_{A_{S,R}}
       (A_{S,R}\otimes M,A_{S,R}\otimes N)\\
 &\qquad\simeq
   \operatorname{Map}_{\mathrm{DA}(S,R)}(M,A_{S,R}\otimes N)\\
 &\qquad\simeq
   \operatorname*{colim}_j
     \operatorname{Map}_{\mathrm{DA}(S,R)}(M,a_{j,S}\otimes N).
\end{align*}
Tensor products of compact objects remain compact. Ordinary weak
continuity applied to each term, followed by interchange of the
two filtered colimits, proves continuity of these module mapping
spectra. The argument uses no compactness assumption on
\(A_{S,R}\) itself.

Every smooth finite-presentation generator descends to a finite-type
model. The finite-type equivalences and
\eqref{eq:forward-free-modules} consequently give full faithfulness
on the free compact generators over \(S\), and their images
generate \(\mathrm{DM}(S,R)\). Thus
\(\Theta_{S,R}^{\kappa}\) is an equivalence for affine \(S\).

For general \(S\), choose a finite affine open cover. If
\(j:U\to S\) is one of its open immersions, compatibility with
the smooth pushforward \(j_\sharp\) gives by adjunction
\[
 j^*(\Theta_{S,R}^{\kappa})^{\mathrm R}
 \simeq(\Theta_{U,R}^{\kappa})^{\mathrm R}j^*.
\]
The unit and counit of the comparison adjunction restrict to
equivalences on the affine opens. Joint conservativity of these
restrictions therefore proves the global equivalence.

The right adjoint of \(\Theta_{S,R}^{\kappa}\), after forgetting
the module structure, is \(G_{S,R}\), with \(A_{S,R}\)-action
induced by \(\kappa_{S,R}\). Its adjunction unit on the free
rank-one module is precisely \(\kappa_{S,R}\). This map is
therefore an equivalence on every base under consideration. Finally,
\eqref{eq:forward-kappa-naturality} proves all the canonical
coefficient exchanges.
\end{proof}

\subsection{The criterion and its application}

\begin{theorem}[Forward moving-tower criterion]
\label{thm:forward-criterion}
Let \(R\) be a commutative ring and let \(B\) be the spectrum
of a localization of \(\mathbb Z\). Suppose that a coherent
multiplicative presentation~\eqref{eq:forward-presentation} and
the forward maps of Definition~\ref{def:forward-errors} have been
constructed. Assume
\begin{equation}\label{eq:forward-vanishing}
 \operatorname*{colim}_r\mathcal U_{r,R}=0,
 \qquad
 \operatorname*{colim}_r\mathcal F_{S,r,R}=0
 \quad\text{for every finite-type }S/B.
\end{equation}
Then, for every noetherian finite-dimensional \(B\)-scheme \(S\),
the maps and functors induced by the chosen comparison are
equivalences
\[
 \kappa_{S,R}:A_{S,R}\xrightarrow{\simeq}HR_S,
 \qquad
 \Theta_{S,R}^{\kappa}:
 \operatorname{Mod}_{A_{S,R}}(\mathrm{DA}(S,R))
       \xrightarrow{\simeq}\mathrm{DM}(S,R).
\]
The first is an equivalence of commutative algebras and the second
is symmetric monoidal. They commute with arbitrary pullback, and
every canonical exchange \(\tau_{f,R}\) is an equivalence.

In particular, the conclusions hold under the following
finite-transition hypotheses:
\begin{enumerate}[label=\textup{(\arabic*)},leftmargin=2em]
\item For every \(r\geq0\), there is an \(m>r\) such that the
whole transition \(\mathcal U_{r,R}\to\mathcal U_{m,R}\)
is zero in the homotopy category of \(\mathrm{DA}(B,R)\).
\item For every finite-type \(B\)-scheme \(S\) and every
\(r\geq0\), there is an \(m>r\) such that the whole transition
\(\mathcal F_{S,r,R}\to\mathcal F_{S,m,R}\) is zero in the
homotopy category of \(\mathrm{DA}(S,R)\).
\end{enumerate}
For \(B=\operatorname{Spec}\mathbb Z\), the conclusion covers
all noetherian finite-dimensional schemes.
\end{theorem}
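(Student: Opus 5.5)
The plan is to assemble the criterion from the results already proved in this section. First, from the vanishing hypotheses~\eqref{eq:forward-vanishing} and the colimit identifications~\eqref{eq:forward-error-colimits}, we obtain \(\operatorname{cofib}(\nu_{B,R})=0\) and \(\operatorname{cofib}(\tau_{p_S,R})=0\) for every finite-type \(S/B\). In the stable categories \(\mathrm{DA}(B,R)\) and \(\mathrm{DA}(S,R)\), vanishing of a cofiber is equivalent to the map being an equivalence. Thus \(\nu_{B,R}\) and all structural exchanges \(\tau_{p_S,R}\) are equivalences. They are already maps of commutative algebras by construction, and the forgetful functor \(\operatorname{CAlg}\to\mathrm{DA}\) is conservative, so they are equivalences of commutative algebras.

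Next, these are exactly the hypotheses of Proposition~\ref{prop:forward-finite-type}. It yields that \(\kappa_{S,R}\) and \(\Theta^{\kappa}_{S,R}\) are equivalences for every finite-type \(B\)-scheme, and that all exchanges between such schemes are equivalences. Feeding this into Lemma~\ref{lem:forward-continuity} extends the conclusions to every noetherian finite-dimensional \(B\)-scheme: \(\Theta^{\kappa}_{S,R}\) and \(\kappa_{S,R}\) are equivalences, and every \(\tau_{f,R}\) is an equivalence. The symmetric monoidal structure of \(\Theta^{\kappa}_{S,R}\) comes from~\eqref{eq:forward-module-functor}. Compatibility with arbitrary pullback follows from~\eqref{eq:forward-kappa-naturality} together with the pullback compatibility of \(\Theta^{\kappa}\) noted after~\eqref{eq:forward-module-functor}. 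These are structural facts of the construction, and invertibility does not change them.

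For the finite-transition version, Lemma~\ref{lem:telescope-zero} applies directly to the coherent diagrams \(\mathcal U_{\bullet,R}\) in \(\mathrm{DA}(B,R)\) and \(\mathcal F_{S,\bullet,R}\) in \(\mathrm{DA}(S,R)\). Both categories are presentable stable, so they admit countable coproducts. Hypotheses (1) and (2) therefore imply~\eqref{eq:forward-vanishing}. For \(B=\operatorname{Spec}\mathbb Z\), every scheme lies over \(B\) uniquely, so the conclusion covers all noetherian finite-dimensional schemes.

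I expect no deep obstacle, since the analytic content lies in the earlier proposition and lemma. The one point requiring care is the identification~\eqref{eq:forward-error-colimits}. This uses that sequential colimits commute with cofibers in a stable category, and that the colimit of the \(q_{r,R}\) is \(\nu_{B,R}\) under \(\delta\) and Proposition~\ref{prop:cycle-tower-presentation}. One must also check that the colimit of the \(\beta_{p_S,r,R}\) is \(\tau_{p_S,R}\), which needs \(p_S^*\) to commute with colimits as a left adjoint. Once that is granted, the argument is a direct chaining of the cited results.
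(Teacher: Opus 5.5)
Your proposal is correct and follows essentially the same route as the paper's proof. The paper uses \eqref{eq:forward-error-colimits} to turn the vanishing hypotheses into invertibility of \(\nu_{B,R}\) and of every finite-type \(\tau_{p_S,R}\); Proposition~\ref{prop:forward-finite-type} and Lemma~\ref{lem:forward-continuity} then give the conclusions, and Lemma~\ref{lem:telescope-zero} reduces the finite-transition hypotheses to \eqref{eq:forward-vanishing}. Your additional remarks (cofiber vanishing detects equivalences, conservativity on commutative algebras, the case \(B=\operatorname{Spec}\mathbb Z\)) only spell out steps the paper leaves implicit.
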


\begin{proof}
The identities~\eqref{eq:forward-error-colimits} turn
\eqref{eq:forward-vanishing} into invertibility of \(\nu_{B,R}\)
and of every finite-type structural exchange \(\tau_{p_S,R}\).
Proposition~\ref{prop:forward-finite-type} gives the comparison on
finite-type bases, and Lemma~\ref{lem:forward-continuity} gives
the stated generality. The finite-transition conditions imply
\eqref{eq:forward-vanishing} by Lemma~\ref{lem:telescope-zero}.
\end{proof}

The finite-transition conditions allow bounds depending on both
the stage and the base; they require the entire error map to vanish.
They are sufficient for the vanishing-colimit conditions, which
can also be established directly.

In the application to Theorem~\ref{thm:main}, use the arithmetic
base \(B\) and coefficients \(\Lambda\) of
Section~\ref{sec:arithmetic-setup}. Take
\(D_{B,r,\Lambda}=D^{\mathrm{fr}}_{B,r,\Lambda}\) and the
canonical framed-to-cycle maps of
Section~\ref{sec:framed-comparison}. Then
\(\nu_{B,\Lambda}=\mu_{B,\Lambda}\), and naturality identifies
\(\kappa_{S,\Lambda}\) with \(\mu_{S,\Lambda}\). The geometric
argument below proves structural coefficient exchange, while the
arithmetic base comparison identifies \(\mu_{B,\Lambda}\). These establish
the vanishing-colimit hypotheses of the criterion. This is the
formulation used in the final proof of Theorem~\ref{thm:main}.

\section{The canonical framed comparison}
\label{sec:framed-comparison}

We construct the unital map from the absolute zero-slice algebra to
the finite-transfer coefficient algebra. The construction is integral
and commutes with derived extension of coefficients. Its compatibility
with pullback will identify the algebra comparison once coefficient
base change has been established.

\subsection{The framed degree object and its derived Tate powers}

Let $\operatorname{Corr}^{\mathrm{fr}}(\mathrm{Sm}_S)$ be the
category of finite syntomic framed correspondences.  We use additive
$D(\mathbb Z)$-valued presheaves on this enriched category, with
derived Day convolution.  Write $h^{\mathrm{fr}}_{\mathbb Z,S}(X)$
for the derived additive representable.  Thus its construction retains
the spaces of framed correspondences; it is not obtained by taking
their sets of connected components first.  Let $U^{\mathrm{fr}}_S$ denote
restriction along graphs to ordinary presheaves of complexes.
This restriction is different from the stable functor $U$ used in
the definition of $HR_S$.

The constant sheaf of integers admits degree transfers.  More
explicitly, a finite syntomic framed span $X\xleftarrow{p}W
\xrightarrow{v}Y$ sends a locally constant integer-valued function
on $Y$ to its degree-weighted sum along $p$.  Denote this additive
framed sheaf by $\mathcal D_S=\mathbb Z^{\mathrm{deg}}_S$.  Its underlying
ordinary sheaf is the constant sheaf $\mathbb Z$.  It is essential
that $\mathcal D_S$ is this existing degree sheaf, rather than the free framed
representable $h^{\mathrm{fr}}_{\mathbb Z,S}(S)$.

Set
\begin{align}
 \mathcal L^{\mathrm{fr}}_S
   &=\operatorname{cofib}\bigl(
       h^{\mathrm{fr}}_{\mathbb Z,S}(S)
       \xrightarrow{1}
       h^{\mathrm{fr}}_{\mathbb Z,S}(\mathbb G_m)\bigr),
       \label{actual:framed-reduced}\\
 V_{S,r,\mathbb Z}
   &=L_{\mathrm{Nis},\mathbb A^1}U^{\mathrm{fr}}_S\!\left(
       \mathcal D_S\otimes^{\mathbf L}_{\mathrm{Day}}
       (\mathcal L^{\mathrm{fr}}_S)^{\otimes r}\right),
       \label{actual:framed-effective}\\
 D^{\mathrm{fr}}_{S,r,R}
   &=\Sigma_T^\infty
       (V_{S,r,\mathbb Z}\otimes^{\mathbf L}_{\mathbb Z}R)(-r)[-r].
       \label{actual:framed-stages}
\end{align}
Effective localization is understood after scalar extension in the
last line.  The raw $r$th Tate level is
$V_{S,r,\mathbb Z}[r]$, since the framed Tate object is
$\mathcal L^{\mathrm{fr}}_S[1]$.  Thus the normalization in
\eqref{actual:framed-stages} is exactly suspension of that level
followed by tensoring with $T_R^{-r}$.

Derived Day convolution has its usual two-sided bar model.  In this
case the terms involve composable strings of framed correspondences,
the reduced torus labels, and an endpoint coefficient in the degree
sheaf.  The faces compose correspondences or apply the degree
transfer, and the degeneracies insert identities.  All mapping
spaces and their chains remain in this bar construction.  In
particular, replacing it by a set of supports would change the
construction.  The bar model also gives the external products and
their higher associativity and symmetry compatibilities.  The
framed graph of the new torus coordinate supplies the Tate bonds.
We denote the resulting normalized unit maps by
\[
 u^{\mathrm{fr}}_{S,r,R}:\mathbf1_{S,R}
       \longrightarrow D^{\mathrm{fr}}_{S,r,R}.
\]
In degree zero the underlying degree sheaf is constant, so
$D^{\mathrm{fr}}_{S,0,R}=\mathbf1_{S,R}$.

\begin{proposition}[Framed presentation and base change]
\label{actual:framed-presentation}
There is a natural equivalence of commutative algebras
\begin{equation}\label{actual:framed-assembly}
 \delta^{\mathrm{fr}}_{S,R}:
 \operatorname*{colim}_{r\geq0}D^{\mathrm{fr}}_{S,r,R}
       \xrightarrow{\simeq}A_{S,R}.
\end{equation}
Its underlying motivic spectrum is $\underline{A_{S,R}}$.
For every morphism $f:S'\to S$, the canonical maps give equivalences
\begin{equation}\label{actual:framed-base-change}
 f^*D^{\mathrm{fr}}_{S,r,R}
       \xrightarrow{\simeq}D^{\mathrm{fr}}_{S',r,R}
\end{equation}
compatible with bonds, units, products, and composition of pullbacks.
\end{proposition}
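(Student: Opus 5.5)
We reduce the assembly equivalence \eqref{actual:framed-assembly} to the framed-motivic description of the zero slice. By the reconstruction theorem of Elmanto--Hoyois--Khan--Sosnilo--Yakerson, $\mathrm{SH}(S)$ is equivalent to framed motivic spectra. Under this equivalence the $\mathbb P^1$- or Tate-spectrum whose $r$th level is the framed motivic localization of $\mathcal D_S\otimes(\mathbb G_m^{\wedge r})$ is the framed suspension spectrum of the degree sheaf. Hoyois \cite[Theorem~21 and Corollary~22]{HoyoisLocalization} identifies this spectrum with the absolute motivic cohomology spectrum. Bachmann--Elmanto--Morrow identify the latter with $s_0(\mathbf1_S)$ and with the linear algebra $A_S$ \cite[Theorem~9.3, Corollary~9.7, Remark~9.11]{BEM}.

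The first step is to check that the linear (additive, $D(\mathbb Z)$-valued) Day-convolution model used in \eqref{actual:framed-effective} computes the same levels as the space-level framed model after forgetting. This uses that $\mathcal D_S$ is already an additive framed sheaf. Tensoring it over the framed correspondence category with the additive representables $\mathcal L^{\mathrm{fr}}_S$ therefore agrees with tensoring the underlying framed presheaf of spaces with $\mathbb G_m^{\wedge r}$ and linearizing; the bar model makes this explicit levelwise. Sequential spectrum reconstruction, exactly as in the proof of Proposition~\ref{prop:cycle-tower-presentation}, then presents the stabilization as $\operatorname{colim}_r D^{\mathrm{fr}}_{S,r,\mathbb Z}$. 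The multiplicative structure comes from the external products of the bar model, the units from $u^{\mathrm{fr}}$, and the degree-zero identification from constancy of $\mathcal D_S$. Derived extension to $R$ commutes with $\Sigma^\infty_T$, with effective localization and with colimits. Hence $\operatorname{colim}_r D^{\mathrm{fr}}_{S,r,R}\simeq A_{S,\mathbb Z}\otimes^{\mathbf L}R=A_{S,R}$ as commutative algebras, and Lemma~\ref{lem:linear-model} gives the underlying spectrum $\underline{A_{S,R}}$.

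For \eqref{actual:framed-base-change} we use that $f^*$ is colimit preserving and symmetric monoidal, and commutes with $\Sigma^\infty_T$, Tate twists and $\otimes R$. It therefore suffices to show that $f^*$ of the unstable level $V_{S,r,\mathbb Z}$ agrees with $V_{S',r,\mathbb Z}$ after motivic localization. For the torus factors this is immediate, since $h^{\mathrm{fr}}$ of smooth schemes and Day convolution are compatible with pullback; on the framed side, pullback of representables is representable. The substantive point is the degree sheaf: we need $f^*$ of the framed suspension of $\mathcal D_S$ to be that of $\mathcal D_{S'}$. We invoke the framed presentation of $\mathcal D$ as a motivic localization of a colimit of framed representables, namely the group completion of framed finite syntomic schemes of degree $d$ (Hoyois's computation). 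This presentation is stable under arbitrary base change because finite syntomic correspondences and their degrees pull back. The compatibility of the stable exchange with these presentations then follows from left Kan extension.

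Compatibility with bonds, units, products and composition is formal, because every map is induced by maps of the bar models, which are natural in $S$. The main obstacle is the degree-sheaf base change: one must make sure the stable object, not merely its naive unstable level, is the pullback. I would attack this by passing to the equivalent statement $f^*\Sigma^\infty_{\mathrm{fr}}\mathbb Z^{\deg}_S\simeq\Sigma^\infty_{\mathrm{fr}}\mathbb Z^{\deg}_{S'}$. This holds because both sides are $s_0$ of the sphere, and $s_0(\mathbf 1)$ is stable under pullback by \cite[Corollary~9.7]{BEM} together with \eqref{eq:criterion-A-properties}. The levelwise statement then follows, since the exchange maps are compatible with the tower and with the assembly.
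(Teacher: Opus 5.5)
The assembly half of your proposal follows the paper's route: Hoyois's identification of the framed suspension of the degree sheaf with Spitzweck's spectrum, then Bachmann--Elmanto--Morrow, then sequential reconstruction. The base-change half, however, has a genuine gap.

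The level $V_{S,r,\mathbb Z}$ is $L_{\mathrm{Nis},\mathbb A^1}U^{\mathrm{fr}}_S(\cdots)$, so framed transfers are forgotten \emph{before} $f^*$ is applied in $\mathrm{DA}$. You use that framed pullback carries $\mathcal L^{\mathrm{fr}}_S$ to $\mathcal L^{\mathrm{fr}}_{S'}$ and is monoidal for Day convolution, and you add a left Kan extension argument for $\mathcal D_S$. These only compute the \emph{framed} pullback. They say nothing about the exchange $f^*U^{\mathrm{fr}}_S\to U^{\mathrm{fr}}_{S'}f^*_{\mathrm{fr}}$. That exchange is not formal, because $U^{\mathrm{fr}}$ does not send framed representables to representables. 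Its analogue for finite correspondences, $f^*\mathbb Z^{\mathrm{tr}}_S(X)$ versus $\mathbb Z^{\mathrm{tr}}_{S'}(X_{S'})$, is precisely what Sections~\ref{acg-section}--\ref{aca-linear-section} are devoted to.

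In the framed setting the needed input is Hoyois's Lemma~16: forgetting framed transfers commutes with arbitrary pullback after motivic localization, also $S^1$-stably. This is a consequence of his localization theorem. The paper combines it with his Lemma~20 (the pullback of the degree framed sheaf is the degree sheaf) and with monoidality of framed pullback to obtain \eqref{actual:framed-base-change}. Your substitute for Lemma~20 does not work as stated. The group completion of framed finite syntomic schemes is not the degree sheaf; its framed suspension is algebraic cobordism, not motivic cohomology.

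Your fallback does not close the gap either. Stability of $s_0(\mathbf 1)$ under pullback gives $f^*\Sigma^\infty_{\mathrm{fr}}\mathcal D_S\simeq\Sigma^\infty_{\mathrm{fr}}\mathcal D_{S'}$, i.e.\ that the exchange is an equivalence on $\operatorname{colim}_r$. The step ``the levelwise statement then follows'' is invalid: a map of sequential diagrams inducing an equivalence on colimits need not be a levelwise equivalence. The proposition asserts \eqref{actual:framed-base-change} for each $r$.

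There are also two smaller points in the assembly part.
\begin{itemize}
\item The correct level comparison is the free-factor identity $U_{H\mathbb Z}\bigl(\mathcal D_S\otimes(\mathcal L^{\mathrm{fr}}_S)^{\otimes r}\bigr)\simeq\underline{\mathcal D_S}\otimes(\mathcal L^{\mathrm{fr}}_{S,\mathrm{Sp}})^{\otimes r}$, where the linear structure is carried by $\mathcal D_S$ alone. If your ``linearizing'' means applying free $\mathbb Z$-chains to the framed space $\mathcal D_S\otimes\mathbb G_m^{\wedge r}$, you get $H\mathbb Z\otimes(\text{correct level})$ and hence the wrong assembly.
\item To land in $\operatorname{CAlg}(\mathrm{DA}(S,\mathbb Z))$ rather than $\operatorname{CAlg}(\mathrm{SH}(S))$, you must also match the ordinary $H\mathbb Z^{\mathrm{top}}$-structures. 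The paper does this by showing that $\operatorname{Map}_{\operatorname{CAlg}}(H\mathbb Z^{\mathrm{top}}_{B_0},\underline A_{B_0})$ is contractible for $B_0=\operatorname{Spec}\mathbb Z$.
\end{itemize}
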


\begin{proof}
We first identify the linear raw levels with the spectral framed
ones, before motivic localization.  Let $\mathcal P_S^{\mathrm{fr}}$
be the category of spectrum-valued presheaves on
$\operatorname{Corr}^{\mathrm{fr}}(\mathrm{Sm}_S)$ that carry finite
disjoint unions to products, equipped with derived Day convolution.
The category of additive $D(\mathbb Z)$-valued framed presheaves is
$\operatorname{Mod}_{H\mathbb Z}(\mathcal P_S^{\mathrm{fr}})$.
Here the coefficient algebra $H\mathbb Z$ means the ordinary spectrum
$H\mathbb Z\in\mathrm{Sp}$ tensored with the Day-convolution unit.
We use the forgetful functor
\[
 U_{H\mathbb Z}:
 \operatorname{Mod}_{H\mathbb Z}(\mathcal P_S^{\mathrm{fr}})
       \longrightarrow\mathcal P_S^{\mathrm{fr}}.
\]
It forgets the $H\mathbb Z$-module action while retaining the
underlying spectra and all framed transfers.  In contrast,
$U_S^{\mathrm{fr}}$ restricts along graphs and forgets framed
transfers.  The functor $U_{H\mathbb Z}$ does
not apply a free-module or free-chain construction.
Let
$\mathcal L^{\mathrm{fr}}_{S,\mathrm{Sp}}$ be the reduced spectral
additive representable and let $\underline{\mathcal D_S}$ be the underlying
spectral degree object.  The linear representables are free
coefficient modules, so
\[
 \mathcal L^{\mathrm{fr}}_S
       \simeq H\mathbb Z\otimes
                         \mathcal L^{\mathrm{fr}}_{S,\mathrm{Sp}}.
\]
For $N\in\operatorname{Mod}_{H\mathbb Z}(\mathcal P_S^{\mathrm{fr}})$
and $X\in\mathcal P_S^{\mathrm{fr}}$, the free-factor identity
\[
 U_{H\mathbb Z}
   \bigl(N\otimes^{\mathbf L}_{H\mathbb Z}
                    (H\mathbb Z\otimes X)\bigr)
       \simeq U_{H\mathbb Z}(N)\otimes X
\]
therefore gives, with all tensor products interpreted as Day
convolution in the corresponding presheaf category,
\[
 U_{H\mathbb Z}
   \bigl(\mathcal D_S\otimes^{\mathbf L}_{\mathrm{Day}}
                      (\mathcal L^{\mathrm{fr}}_S)^{\otimes r}\bigr)
       \simeq
 \underline{\mathcal D_S}\otimes^{\mathbf L}_{\mathrm{Day}}
                   (\mathcal L^{\mathrm{fr}}_{S,\mathrm{Sp}})^{\otimes r}.
\]
These identities are integral and respect the indexed products,
unit maps, and bonds.  Nisnevich and $\mathbb A^1$ localization
are imposed by the corresponding free-module relations, so the
identification survives motivic localization and forgetting framed
transfers.  Consequently the raw levels above are the ordinary
linear levels of the framed Tate suspension of the degree object.
In particular, the construction does not apply free integral chains
again to the spectrum obtained by assembly.

The framed reconstruction theorem
and Hoyois's identification of the framed suspension of the degree
sheaf identify its assembly with Spitzweck's motivic cohomology
spectrum, denoted here by $M\mathbb Z_S$, as an $E_\infty$-algebra
\cite[Theorem~21 and Corollary~22]{HoyoisLocalization}.
This notation distinguishes Spitzweck's construction from the
transfer-defined spectrum $H\mathbb Z_S=U(\mathbf1_{\mathbb Z})$.
The linear enhancement is part of Corollary~22: coefficients are
extended in $D(\mathbb Z)$, rather than by a second free
linearization of the resulting spectrum.  The unit-compatible
zero-slice identification gives
$M\mathbb Z_S\simeq\underline A_S$
\cite[Remark~4.14, Corollary~9.7, and Remarks~4.37, 9.11]{BEM}.
The ordinary coefficient structures agree coherently. To see this,
put $B_0=\operatorname{Spec}\mathbb Z$. The weight-zero comparison
\cite[Construction~4.35 and Corollary~6.26]{BEM} gives
\[
 \tau_{\geq0}\Gamma(B_0,\underline A_{B_0})\simeq H\mathbb Z.
\]
The constant-spectrum adjunction, followed by connective truncation,
therefore identifies
\[
 \operatorname{Map}_{\operatorname{CAlg}(\mathrm{SH}(B_0))}
       (H\mathbb Z^{\mathrm{top}}_{B_0},\underline A_{B_0})
 \simeq
 \operatorname{Map}_{\operatorname{CAlg}(\mathrm{Sp})}
       (H\mathbb Z,H\mathbb Z)\simeq *.
\]
Here maps from the connective algebra $H\mathbb Z$ factor through
the connective cover. Thus the unital identification has a unique
coherent ordinary integral linear enhancement. Pullback supplies
the same compatibility over $S$.
Sequential spectrum reconstruction now gives
\eqref{actual:framed-assembly}; the multiplication is retained from
the symmetric prespectrum model.

For base change, Hoyois's Lemma~20 identifies the pullback of the
degree sheaf with the degree sheaf on $S'$.  Framed pullback is
symmetric monoidal and carries the reduced framed representable
in \eqref{actual:framed-reduced} to the corresponding object on
$S'$.  Lemma~16 of the same paper identifies pullback followed by
forgetting framed transfers with forgetting followed by pullback,
after motivic localization, including after $S^1$-stabilization.
The resulting exchange is compatible with the ordinary coefficient
algebra $H\mathbb Z$, which pullback preserves, and hence gives the
same exchange on its module objects.  Equivalently, this extension
can be computed on free coefficient modules and then on their
monadic bar resolutions; both sides preserve those realizations.
Thus the exchange applies to the integral linear levels just
identified.  Applying these statements to
\eqref{actual:framed-effective} proves
\eqref{actual:framed-base-change}.  These are natural monoidal
exchange transformations, so the asserted compatibilities are
retained.  Finally, derived coefficient extension commutes with
all the constructions and proves the assertions for $R$.
The underlying motivic spectrum of the assembly has coefficients
extended by the relative tensor product
\[
 \underline{A_{S,R}}\simeq
 \underline{A_S}\otimes_{H\mathbb Z^{\mathrm{top}}_S}
                    HR^{\mathrm{top}}_S
\]
of Lemma~\ref{lem:linear-model}.
\end{proof}

\subsection{The canonical map from framed stages to cycle stages}

There is a symmetric monoidal functor
\[
 \gamma_S:\operatorname{Corr}^{\mathrm{fr}}(\mathrm{Sm}_S)
       \longrightarrow\operatorname{Cor}_S
\]
which takes the fundamental relative cycle of a finite syntomic
span and pushes it to the product of its source and target
\cite[Section~5.3]{EHKInfinite}.  Restriction along this functor
identifies the Nisnevich sheafified transferred unit with $\mathcal D_S$.
Indeed, the transferred representable of the base, as a Nisnevich
sheaf, is the constant sheaf of integers
\cite[Proposition~10.2.5]{CD}, and its framed transfers are degrees.
The extension--restriction adjunction below can be formed in
additive Nisnevich sheaves.  Equivalently, one forms the presheaf
adjunction and applies its monoidal Nisnevich localization.  No
identification with a free framed representable is involved.

Let $\mathcal L^{\mathrm{tr}}_S$ be the reduced transferred
representable of $(\mathbb G_m,1)$.  Extension along $\gamma_S$
takes $\mathcal L^{\mathrm{fr}}_S$ to
$\mathcal L^{\mathrm{tr}}_S$.  The unit of extension and restriction,
followed by the lax monoidal structure of restriction, gives
\begin{equation}\label{actual:projection-formula-map}
 \mathcal D_S\otimes^{\mathbf L}_{\mathrm{Day}}
      (\mathcal L^{\mathrm{fr}}_S)^{\otimes r}
 \longrightarrow
 \gamma_S^*((\mathcal L^{\mathrm{tr}}_S)^{\otimes r}).
\end{equation}
This is the usual projection-formula morphism; no projection-formula
equivalence is assumed.  Forgetting transfers, localizing, and
normalizing yields
\begin{equation}\label{actual:q}
 q_{S,r,R}:D^{\mathrm{fr}}_{S,r,R}
       \longrightarrow E_{S,r,R}.
\end{equation}
The construction at the adjunction level supplies the compatible
products and Tate bonds.  In particular,
\begin{equation}\label{actual:q-unit}
 q_{S,r,R}u^{\mathrm{fr}}_{S,r,R}=g_{S,r,R}
\end{equation}
coherently.  It also supplies the natural square relating
\eqref{actual:framed-base-change}, $q$, and the cycle exchange
map $\beta_{f,r,R}$.

By Proposition~\ref{prop:cycle-tower-presentation} and
\eqref{actual:framed-assembly}, assembly gives a specified unital
map of commutative algebras
\begin{equation}\label{actual:mu}
 \mu_{S,R}:A_{S,R}\longrightarrow HR_S.
\end{equation}
It is compatible with coefficient extension and with base change
in the precise sense
\begin{equation}\label{actual:mu-base-change}
 \mu_{S',R}\circ(f^*A_{S,R}\simeq A_{S',R})
   =\tau_{f,R}\circ f^*\mu_{S,R}.
\end{equation}
Thus this comparison exists before any assertion that a comparison
map or an exchange map is an equivalence.

Its adjoint is a commutative-algebra map
\(F_{S,R}(A_{S,R})\to\mathbf1_R\). Consequently there is an
unconditionally defined comparison functor
\begin{equation}\label{actual:module-comparison}
 \Theta_{S,R}(M)
   =\mathbf1_R\otimes_{F_{S,R}(A_{S,R})}F_{S,R}(M):
 \operatorname{Mod}_{A_{S,R}}(\mathrm{DA}(S,R))
       \longrightarrow\mathrm{DM}(S,R).
\end{equation}
It is symmetric monoidal and preserves colimits. Equation
\eqref{actual:mu-base-change} and the smooth projection formula
make it compatible with pullback and smooth pushforward. 
We record the free-module identity for later use:
\begin{equation}\label{eq:comparison-free-modules}
 \Theta_{S,R}(A_{S,R}\otimes M)=F_{S,R}(M).
\end{equation}

\section{Arithmetic base and geometric motives}
\label{sec:arithmetic-setup}

Fix a set \(P\) of prime numbers and set
\begin{equation}\label{aca-arithmetic-base}
 B=\operatorname{Spec}\mathbb Z[\ell^{-1}:\ell\notin P],
 \qquad \Lambda=\mathbb Z[p^{-1}:p\in P].
\end{equation}
The base \(B\) is noetherian, excellent, and regular, of dimension
at most one. Every positive residue characteristic of a
\(B\)-scheme belongs to \(P\). For a given \(S\) in
Theorem~\ref{thm:main}, take \(P\) to be its set of positive
residue characteristics. Every prime outside \(P\) is a unit in
each local ring of \(S\), and hence in its structure sheaf. Thus
\(S\) factors uniquely through \(B\), and the coefficient
hypothesis supplies a homomorphism \(\Lambda\to R\).

We shall prove the theorem for noetherian finite-dimensional
\(B\)-schemes with coefficients \(\Lambda\), and then extend
coefficients. The use of \(B\) ensures that finite-type
approximations introduce no new residue characteristics. The choices
\(P=\varnothing\), \(P=\{p\}\), and \(P\) equal to all primes
give, respectively,
\[
 (B,\Lambda)=
 (\operatorname{Spec}\mathbb Q,\mathbb Z),\quad
 (\operatorname{Spec}\mathbb Z_{(p)},\mathbb Z[1/p]),\quad
 (\operatorname{Spec}\mathbb Z,\mathbb Q).
\]

For a sheaf of spaces \(\mathcal P\) on \(\mathrm{Sm}_S\) and
a sheaf of \(\Lambda\)-modules \(\mathcal A\), put
\[
 M_S(\mathcal P)=\Sigma_T^\infty
     L_{\mathrm{Nis},\mathbb A^1}\Lambda[\mathcal P],
 \qquad
 N_S(\mathcal A)=\Sigma_T^\infty
     L_{\mathrm{Nis},\mathbb A^1}(\mathcal A[0]).
\]
The first construction is the free linear motive of a space; the
second uses its given additive structure. Both take values in
\(\mathrm{DA}(S,\Lambda)\). The functor \(M_S\) preserves
colimits and takes Cartesian products to tensor products.

Let \(\mathcal Z_{S,r}\) be the sheaf of integral universal finite
relative cycles on \(\mathbb G_{m,S}^r\), with transfers forgotten.
The reduced summand \(C_{S,r,\Lambda}\) and the normalized stages
of Section~\ref{sec:models} satisfy
\begin{equation}\label{aca-stages}
 E_{S,r,\Lambda}=N_S(C_{S,r,\Lambda})[-r](-r),
 \qquad
 \operatorname*{colim}_{r}E_{S,r,\Lambda}\simeq H\Lambda_S.
\end{equation}
Their canonical exchange maps, induced by relative-cycle pullback,
assemble to \(\tau_{f,\Lambda}\). We prove these maps invertible
first between seminormal bases.

\section{Geometric base change and arithmetic power germs}
\label{acg-section}

Retain \(B\), \(P\), and \(\Lambda\) from
\eqref{aca-arithmetic-base}, and let \(f:T\to S\) be a morphism
of noetherian finite-dimensional \(B\)-schemes. Seminormality is
not assumed in this section. The geometric arguments use the
ordinary Nisnevich category \(\mathrm{DA}\) and are valid with
arbitrary coefficients.

A permutation model is an invariant open subscheme
\[
 Y\subseteq
 (\mathbb G_{m,\mathbb Z}^{r})^n\times_{\mathbb Z}
 \mathbb A_{\mathbb Z}^{d},
\]
with a finite constant group \(G\) acting by permutations of the
\(n\) torus blocks and trivially on the affine coordinates.
We also allow coordinate fixed loci, finite disjoint unions, and
induced schemes \(G\times_H V\).  These have the same coordinate
description componentwise, possibly with a constant kernel in the
permutation action.  All models and their morphisms in this section
are defined over \(\mathbb Z\), with restriction to open subsets
allowed.  In particular, adjoining the inverse of a nonzero integer
is allowed as an open restriction of a model.
Set \(Y_S=Y\times_{\mathbb Z}S\), and similarly for \(T\).

\subsection{Quotients, base change, and geometric errors}

\begin{lemma}\label{acg-quotient-base-change}
For every permutation model, formation of its geometric quotient
commutes with arbitrary base change from \(\operatorname{Spec}\mathbb Z\):
\[
 (Y/G)\times_{\mathbb Z}S\simeq Y_S/G.
\]
The assertion includes the coordinate fixed loci, invariant opens,
finite disjoint unions, and induced schemes specified above.
\end{lemma}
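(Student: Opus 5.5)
The plan is to reduce to the affine case over \(\mathbb Z\) and then prove that forming rings of invariants commutes with base change. Two pieces of structure make this work: the invariant ring of a permutation action on monomials is a free \(\mathbb Z\)-module with an orbit-sum basis, and this basis remains a basis after any base change.

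\emph{Step 1 (reduction to the affine ambient model).} Since \(G\) is finite and \(Y\) is quasi-projective over \(\mathbb Z\), the quotient \(Y/G\) exists. It is covered by the quotients of \(G\)-invariant affine opens. Formation of the quotient is compatible with invariant open restriction: for an invariant open \(U\subseteq X\), the map \(U/G\to X/G\) is an open immersion onto the image of \(U\). The same compatibility holds with finite disjoint unions, and for induced schemes we have \((G\times_H V)/G\simeq V/H\). So it suffices to treat \(X=(\mathbb G_m^r)^n\times\mathbb A^d\) with a permutation group \(G\subseteq\Sigma_n\), acting possibly with a constant kernel, together with its coordinate fixed loci. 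A coordinate fixed locus is again a product of tori and affine space, carrying the induced permutation action of the stabiliser. Adjoining the inverse of an integer \(N\) is a further open restriction, and it commutes with invariants because \(N\) is invariant. On the quotient side the open immersion then reads as the base change \(Y_S/G\to X_S/G\), once the affine case is known.

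\emph{Step 2 (the affine computation).} Write \(A=\mathbb Z[t_{ij}^{\pm1}]\otimes\mathbb Z[x_1,\dots,x_d]\). Then \(G\) permutes the Laurent monomial basis of \(A\), so \(A\) is a permutation module \(\bigoplus_{\text{orbits}}\mathbb Z[G/G_m]\). Its invariants \(A^G\) are free on the orbit sums. For any ring \(D\) the invariants \((A\otimes D)^G\) are likewise the \(D\)-span of the orbit sums, since an invariant element has constant coefficients along each orbit. Hence \(A^G\otimes D\to(A\otimes D)^G\) is an isomorphism for every \(D\), with no flatness assumption. Sheafifying over affine opens of \(S\) gives \((X/G)\times_{\mathbb Z}S\simeq X_S/G\).

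\emph{Step 3 (invariant opens and gluing).} An invariant open \(Y\subseteq X\) need not be affine. I would cover it by opens of the form \(D(h)\) with \(h\in A^G\): every point of \(X\) has a \(G\)-stable affine neighbourhood given by a nonvanishing invariant norm. On such an open, \(A_h^G=(A^G)_h\), and the computation of Step 2 localises, remaining valid after \(\otimes D\). Compatibility on overlaps is automatic because all maps are canonical. The quotient \(Y_S/G\) is then glued from these pieces.

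I expect the main obstacle to be Step 3. The difficulty is making sure that the open \(Y\) is a union of opens of the form \(D(h)\) with \(h\) invariant. This follows because \(X\to X/G\) is finite and surjective, so an invariant open is the preimage of the open image of \(Y\) in \(X/G\). Under base change, the geometric quotient \(Y_S/G\) must be identified with this categorical quotient. That identification holds precisely because of the universal invariant computation in Step 2.
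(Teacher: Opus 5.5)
Your proposal is correct and follows essentially the same route as the paper: the orbit-sum basis of the Laurent monomial ring gives \(A^G\otimes D\cong(A\otimes D)^G\) for every ring \(D\) with no flatness, and an invariant open is the preimage of an open in the finite quotient, which is covered by principal opens of invariant functions. Fixed loci, finite disjoint unions and induced schemes then reduce to the same calculation. In Step~3, say explicitly that the fibres of \(X\to X/G\) are exactly the \(G\)-orbits, not merely that the map is finite and surjective; this is what makes \(\pi(Y)\) open with \(Y=\pi^{-1}\pi(Y)\), as the paper notes.
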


\begin{proof}
For the full ambient affine model the coordinate ring is
\[
 A=\mathbb Z[x_{a,j}^{\pm1},z_1,\ldots,z_d].
\]
Its Laurent monomials form a \(\mathbb Z\)-basis permuted by \(G\).
An invariant element has constant coefficients on each monomial
orbit.  Consequently the orbit sums form a basis of \(A^G\), and,
for every ring \(R\), the canonical map
\[
 A^G\otimes_{\mathbb Z}R
       \longrightarrow (A\otimes_{\mathbb Z}R)^G
\]
is an isomorphism of rings.  No averaging or flatness of \(R\)
is involved.

The quotient of the full ambient model is finite.  An invariant
open \(Y\) is the inverse image of an open in that quotient:
its invariant closed complement has closed image, and quotient
fibers are precisely the orbits.  Thus its quotient is this
open subscheme.  Covering the quotient open by principal opens
defined by invariant functions reduces its base-change assertion
to the affine calculation, since localization at an invariant
function commutes with invariants.  A coordinate fixed locus is
again a Laurent-polynomial model, and its normalizer permutes the
resulting orbit blocks.  The same calculation applies to it.
Finite disjoint unions and
\((G\times_H V)/G=V/H\) give the remaining assertions.
\end{proof}

For a possibly singular scheme, its representable presheaf is
always restricted to the smooth site.  Lemma
\ref{acg-quotient-base-change} gives the canonical geometric
base-change error
\begin{equation}\label{acg-error}
 \mathcal B_f(Y,G)=
 \operatorname{cofib}\left(
 f^*M_S(h_{Y_S/G})\longrightarrow M_T(h_{Y_T/G})\right).
\end{equation}
This is a functor of equivariant maps, including maps which
increase stabilizers.  A constant kernel of the action can be
removed without changing this error: both geometric quotients
are unchanged.

\begin{lemma}\label{acg-excision}
The errors \eqref{acg-error} have the following properties.
\begin{enumerate}
\item They satisfy Mayer--Vietoris for invariant open covers and
are invariant under adjoining an affine-line coordinate with
trivial group action.
\item An equivariant separated \'etale map \(V\to Y\) which preserves
geometric stabilizers and is an isomorphism over an invariant
closed subscheme \(Z\subseteq Y\) gives a pushout square of errors,
with the other two vertices the complements of \(Z\).
\item If \(Y/G\) is smooth over \(\mathbb Z\), the error is zero.
In particular this holds for a free action on a smooth model.
\item There are canonical identifications
\[
 \mathcal B_f(G\times_H V,G)\simeq\mathcal B_f(V,H),
\]
and errors of finite disjoint unions are the corresponding sums.
\end{enumerate}
\end{lemma}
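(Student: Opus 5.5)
Each item follows from the defining cofiber \eqref{acg-error}, combined with Lemma~\ref{acg-quotient-base-change} and formal properties of \(M_S\), \(f^*\) and \(L_{\mathrm{Nis},\mathbb A^1}\). Since \(f^*\) and \(M_T\) are exact and preserve colimits, any colimit or pushout relation among the representable presheaves \(h_{Y_S/G}\) that holds after Nisnevich localization compatibly over \(S\) and \(T\) passes to the errors. The work is therefore to produce such relations at the level of geometric quotients.

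For (1), an invariant open cover \(Y=Y_1\cup Y_2\) gives an open cover \(Y/G=Y_1/G\cup Y_2/G\), because invariant opens are inverse images of opens in the quotient, as in the proof of Lemma~\ref{acg-quotient-base-change}. It also gives \((Y_1\cap Y_2)/G=Y_1/G\cap Y_2/G\). By Lemma~\ref{acg-quotient-base-change} these identities persist after base change to \(S\) and \(T\). Restricted to the smooth site, a Zariski cover of a possibly singular scheme yields a Nisnevich pushout of representable sheaves. Applying \(M\) and the exchange map then gives a map of cocartesian squares, so their cofibers form a cocartesian square. For the affine-line coordinate we have \((Y\times\mathbb A^1)/G=(Y/G)\times\mathbb A^1\). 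The projection \(h_{X\times\mathbb A^1}\to h_X\) is an \(\mathbb A^1\)-equivalence on the smooth site for arbitrary \(X\), since maps from smooth \(U\) into \(X\times\mathbb A^1\) admit the standard linear contraction. This is compatible with \(f^*\).

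For (4), \((G\times_H V)/G\cong V/H\) canonically and compatibly with base change, so the two errors are literally the same cofiber. Disjoint unions of quotients give coproducts of representables, and \(M\) converts these into sums.

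For (3), if \(X=Y/G\) is smooth over \(\mathbb Z\), then \(X_S\) is smooth over \(S\). Moreover \(f^*M_S(h_{X_S})\to M_T(h_{X_T})\) is the standard equivalence \(f^*\Sigma^\infty_T\Lambda(X_S)\simeq\Sigma^\infty_T\Lambda(X_T)\) for smooth representables. For a free action on a smooth model the quotient map is étale, and the quotient is smooth.

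Item (2) is where I expect the main difficulty. The plan is to show that \(V/G\to Y/G\) is étale and an isomorphism over \(Z/G\), so that \((V/G,\,(V\setminus Z)/G)\to(Y/G,\,(Y\setminus Z)/G)\) is an elementary Nisnevich square after every base change. Étaleness uses the hypothesis that geometric stabilizers are preserved. The standard argument (for example SGA1 or Luna-type) shows that a stabilizer-preserving equivariant étale map between finite quotients induces an étale map of quotients, with \(V\simeq V/G\times_{Y/G}Y\). I would verify this by passing to strict henselizations at points of \(Y/G\) and reducing to the action of the stabilizer \(H\) via the induction of (4). Separatedness together with the isomorphism over \(Z\) then gives the isomorphism over \(Z/G\). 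Because these quotients commute with base change by Lemma~\ref{acg-quotient-base-change}, the same square is Nisnevich distinguished over \(S\) and over \(T\).

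The remaining subtlety is that Nisnevich excision for singular schemes must hold after restriction to the smooth site. Here I would use the fact that an elementary distinguished square of arbitrary schemes yields a cocartesian square of restricted representable sheaves. This holds because, for \(U\) smooth and henselian local, maps into the square satisfy the lifting property: points of \(Y/G\) lying in the open complement factor through it, and points over the closed set lift uniquely through the étale neighbourhood by the henselian property. Applying \(M\) and taking cofibers of the exchange maps then gives the pushout of errors.
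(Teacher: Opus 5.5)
Your proposal is correct and follows the paper's proof essentially item by item. Invariant opens descend to opens of the quotient for (1), and the scheme-level quotient identifications give (3) and (4). For (2), a stabilizer-preserving \'etale descent result produces the Cartesian square and a Nisnevich distinguished square of quotients; the paper cites Rydh's fixed-point-reflecting descent where you invoke SGA1/Luna-type arguments. You then pass to restricted representables by a henselian-stalk lifting argument, whereas the paper pulls the square back along smooth test schemes; either works.

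One small correction: the isomorphism of \(V/G\to Y/G\) over the reduced image of \(Z\) does not come from separatedness. It comes from the Cartesian square \(V\simeq V/G\times_{Y/G}Y\): this identifies its geometric fibres there with the singleton fibres of \(V\to Y\) over \(Z\), and an \'etale map with singleton geometric fibres is an isomorphism.
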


\begin{proof}
Invariant open covers descend to open covers of the geometric
quotient.  Also
\((Y\times\mathbb A^1)/G=(Y/G)\times\mathbb A^1\).
These statements, the monoidal property of \(M\), and the
colimit properties of \(f^*\) prove the first assertion.

For the second, fixed-point-reflecting \'etale descent for finite
quotients gives a Cartesian square
\[
 \begin{matrix}
 V&\longrightarrow&Y\\
 \downarrow&&\downarrow\\
 V/G&\longrightarrow&Y/G
 \end{matrix}
\]
with \'etale bottom arrow; see
\cite[Definitions~3.6 and~3.17, and Corollary~5.4]{RydhQuotients}.
The bottom arrow is an isomorphism over the reduced closed image
of \(Z\).  Indeed, after lifting a geometric point of that image
to \(Z\), the Cartesian square identifies its geometric fiber
with the singleton fiber upstairs.  An \'etale morphism with
singleton geometric fibers over this closed subscheme is an
isomorphism there.  The quotient square with complementary opens
is therefore Nisnevich distinguished.

This assertion remains valid for the restricted representables:
pulling the square back along a smooth test scheme mapping to
\(Y_S/G\) gives a Nisnevich distinguished square of smooth
\(S\)-schemes.  Thus its representable square is a pushout after
Nisnevich localization on the smooth site.  Apply the same argument
over \(T\), and then take the cofiber of the comparison of the
two squares.  In a stable category this is again a pushout.

For a smooth quotient the comparison in \eqref{acg-error} is
ordinary smooth representable base change.  A free finite constant
group action on a smooth model has an \'etale torsor as its quotient
map, so that quotient is smooth.  Finally, the identifications
for induction and disjoint unions hold already for the quotient
schemes and their canonical base-change arrows.
\end{proof}

\subsection{Stabilizer neighborhoods and the tame comparison}

We spell out the common geometric induction.  Remove a constant
kernel first if necessary.  Let \(m\) be the largest order of a
geometric stabilizer on a nonempty model \(Y\).  For each subgroup
\(H\subseteq G\) of order \(m\), its fixed locus \(Y^H\) is smooth:
it is an open in the coordinate diagonal which identifies all
torus blocks in each \(H\)-orbit.  Distinct nonempty such loci
are disjoint, since a point in their intersection would have
stabilizer of order greater than \(m\).  Their union
\[
 Z=\coprod_{|H|=m}Y^H
\]
is an invariant closed subscheme, and \(Y\setminus Z\) has smaller
stabilizers.

For a conjugacy representative \(H\), put \(N=N_G(H)\).
Remove from \(Y\) every other order-\(m\) fixed locus and the
fixed loci of every element of \(G\setminus N\).
The resulting \(N\)-invariant open \(V_H\) contains \(Y^H\);
all its stabilizers are contained in \(N\), and its maximal
fixed locus is \(Y^H\).  The map
\begin{equation}\label{acg-normalizer-neighborhood}
 \coprod_{[H]}G\times_{N_G(H)}V_H\longrightarrow Y
\end{equation}
is \'etale, preserves stabilizers, and is an isomorphism over \(Z\).
On each component it is an open immersion followed by a coordinate
permutation.  Lemma~\ref{acg-excision} therefore reduces an induction
on \(m\) to
\begin{equation}\label{acg-normalizer-case}
 H\triangleleft N,\qquad Z=V^H,
\end{equation}
where \(V\setminus Z\) has smaller stabilizers.
The group \(H\) acts trivially on \(Z\), and \(N/H\) acts freely.
Hence \(Z/N=Z/(N/H)\) is smooth and
\begin{equation}\label{acg-fixed-zero}
 \mathcal B_f(Z,N)=0.
\end{equation}
There is no group homology or division by \(|H|\) in this argument.

\begin{proposition}\label{acg-tame}
If \(n!\) is invertible on \(S\), then
\(\mathcal B_f(Y,G)=0\) for every displayed \(n\)-block model.
Without this hypothesis, the restriction of its error to
\(T[1/n!]\) is zero.
\end{proposition}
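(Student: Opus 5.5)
We argue by induction on the maximal stabilizer order \(m\), using the stabilizer-neighborhood reduction just described, and we show that when \(n!\) is invertible the remaining error on the normalizer neighborhood vanishes by averaging. The order of every subgroup of \(G\subseteq\Sigma_n\) divides \(n!\), so the hypothesis makes all stabilizer orders invertible. If \(m=1\), the action, after removing a constant kernel, is free on a smooth model, and Lemma~\ref{acg-excision}(3) gives \(\mathcal B_f(Y,G)=0\). For \(m>1\), the map \eqref{acg-normalizer-neighborhood} together with Lemma~\ref{acg-excision}(2),(4) expresses \(\mathcal B_f(Y,G)\) through a pushout of four errors. Two of these have strictly smaller stabilizers: the complement \(Y\setminus Z\) and the complements \(V_H\setminus Y^H\). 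By induction they vanish. It therefore suffices to treat the normalizer case \eqref{acg-normalizer-case}, and there we must show \(\mathcal B_f(V,N)=0\). We already have \(\mathcal B_f(Z,N)=0\) by \eqref{acg-fixed-zero}, and \(\mathcal B_f(V\setminus Z,N)=0\) by induction.

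The key step is a tubular (deformation to the normal bundle) comparison near \(Z\). Since \(Z\) is a coordinate diagonal in a product of tori and affine spaces, it has an explicit \(N\)-equivariant retraction and linear normal coordinates. Take the differences of the logarithmic torus coordinates \(x_{a,j}/x_{b,j}-1\) along the \(H\)-orbits. Then an \(N\)-invariant Zariski neighborhood of \(Z\) in \(V\) is equivariantly étale over \(Z\times\mathbb A(\nu)\), where \(\nu\) is the normal representation. This map is an isomorphism over \(Z\) and preserves stabilizers. By Lemma~\ref{acg-excision}(2) we reduce to the linear model \(Z\times\mathbb A(\nu)\), with \(N\) acting through \(H\) linearly on \(\nu\) and freely through \(N/H\) on \(Z\).

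When \(|H|\) is invertible, we can average with \(e=|H|^{-1}\sum_{h\in H}h\). This shows that the quotient \((Z\times\mathbb A(\nu))/N\) is a fibration over the smooth scheme \(Z/(N/H)\) whose fibres are \(\mathbb A(\nu)/H\). The quotient \(\mathbb A(\nu)/H\) is the spectrum of a graded invariant ring, so it is contracted to the origin by the \(\mathbb G_m\)-scaling, equivalently by the \(\mathbb A^1\)-homotopy \(t\cdot v\), which commutes with \(H\) and descends to the quotient. Hence the restricted representable of the quotient is \(\mathbb A^1\)-equivalent to that of \(Z/N\), naturally in the base, and this equivalence commutes with base change because the homotopy is defined over \(\mathbb Z[1/n!]\). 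The error therefore equals \(\mathcal B_f(Z,N)=0\). Gluing back via Mayer--Vietoris in Lemma~\ref{acg-excision}(1) completes the induction.

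For the second statement, without the hypothesis on \(S\), we apply the same argument after base change to \(T[1/n!]\to S[1/n!]\). Lemma~\ref{acg-quotient-base-change} makes the quotients compatible with restriction to this open, and \(f^*\) commutes with restriction to opens. So the restricted error is \(\mathcal B_{f'}(Y,G)\) for \(f':T[1/n!]\to S[1/n!]\), which vanishes by the first part.

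The main obstacle is the linearization step. We must produce an equivariant, stabilizer-preserving étale neighborhood identifying \(V\) near \(Z\) with the normal representation, defined over \(\mathbb Z\) after inverting \(n!\). We must also verify that \(\mathbb A^1\)-contracting \(\mathbb A(\nu)/H\) gives an equivalence of restricted representables, since that quotient is singular and only its smooth-site presheaf is involved. We would first try using the invariance of \(\mathbb A^1\)-homotopies of singular schemes under restriction to \(\mathrm{Sm}_S\): a homotopy \(X\times\mathbb A^1\to X\) induces one on \(\operatorname{Hom}(-,X)\) directly.
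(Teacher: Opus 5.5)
The skeleton of your argument agrees with the paper: the induction on the maximal stabilizer order, the reduction via \eqref{acg-normalizer-neighborhood} to the normalizer case, and the final passage to \(T[1/n!]\to S[1/n!]\). The gap is the step you flag yourself, the equivariant chart near \(Z\), and the sketch you give for it does not work.

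A map to \(Z\times\mathbb A(\nu)\) needs, as its first component, an \(N\)-equivariant retraction onto the diagonal \(Z\). You assert this retraction but never construct it, and it cannot come from the ratio coordinates. An \(H\)-invariant character of \(\mathbb G_m^{I}\) has the form \(x\mapsto\prod_{a\in I}x_a^{c}\), and its restriction to the diagonal is \(x\mapsto x^{c|I|}\), never the identity when \(|I|>1\). Moreover, the ratios \(x_{a,j}/x_{b,j}-1\) do not by themselves give an equivariant \'etale chart onto \(\nu\), which has rank \(|I|-1\) per orbit and coordinate.

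The retraction you need is the additive mean \(R_{I,j}=|I|^{-1}\sum_{a\in I}x_{a,j}\). It is a map to \(Z\) only on the invariant open \(V'\subseteq V\) where the means are units and land in \(Z\). This is exactly where invertibility of \(n!\) is used. In your write-up the averaging idempotent is spent elsewhere, where it is not needed:
\begin{itemize}
\item the twisted-product description of \((Z\times\mathbb A(\nu))/N\) holds without averaging, because \(H\) acts trivially on the flat factor \(Z\) (note also that all of \(N\), not just \(H\), acts on \(\nu\));
\item the scaling homotopy is already defined over \(\mathbb Z\).
\end{itemize}

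Once you have \(R\), your route can be completed. The map \(x\mapsto(R(x),x-R(x))\) is an \(N\)-equivariant open immersion \(V'\hookrightarrow Z\times\mathbb A(\nu)\). It therefore preserves stabilizers, and it is an isomorphism over \(Z\times0\). Two further points then need care:
\begin{itemize}
\item \(Z\times\mathbb A(\nu)\) is not a permutation model, so Lemma~\ref{acg-quotient-base-change} must be replaced by a Reynolds-operator argument for its quotient base change (valid because \(n!\) is invertible);
\item its punctured complement lies outside your induction hypothesis.
\end{itemize}
You do not actually need that complement. In the square of Lemma~\ref{acg-excision}(2), \(\mathcal B_f(V'\setminus Z,N)=0\) by induction, and \(\mathcal B_f(Z\times\mathbb A(\nu),N)\simeq\mathcal B_f(Z,N)=0\) by the scaling contraction. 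The cofibre sequence then exhibits \(\mathcal B_f(V',N)\) as a summand of zero, and Mayer--Vietoris for \(V=V'\cup(V\setminus Z)\) finishes the argument.

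The paper uses the same retraction but never leaves the permutation models. It interpolates \((1-\lambda)x+\lambda R(x)\) on the open \(W\subseteq V'\times\mathbb A^1\) where this lands in \(V\). Mayer--Vietoris for \(V'\times\mathbb A^1=W\cup((V'\setminus Z)\times\mathbb A^1)\) shows that both endpoint sections induce the same map on errors. Hence the inclusion \(V'\to V\), which is an equivalence on errors, factors through \(\mathcal B_f(Z,N)=0\).

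That intrinsic formulation is what the paper reuses almost verbatim in Proposition~\ref{acg-arithmetic}. There no equivariant retraction or splitting of the permutation representation exists, and the \(q\)-power germ takes the place of \(R\). A linear normal model has no analogue in that case.
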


\begin{proof}
First replace the integer models by their opens on which \(n!\)
is invertible.  This changes neither base-changed term, and makes
the averaging formulas below maps of the permitted integer models.
Induct on the maximum stabilizer order, simultaneously for all
invariant opens and auxiliary affine coordinates.  The free case
is Lemma~\ref{acg-excision}.  Use
\eqref{acg-normalizer-neighborhood} and excision to reach
\eqref{acg-normalizer-case}.  For every \(H\)-orbit \(I\) of
torus blocks, replace each coordinate in that orbit by
\[
 R_{I,j}=|I|^{-1}\sum_{a\in I}x_{a,j}.
\]
These functions, with the affine coordinates unchanged, define
an \(N\)-equivariant retraction \(\rho:V'\to Z\) on an invariant
open \(V'\subseteq V\) containing all of \(Z\).
The cover \(V=V'\cup(V\setminus Z)\), together with induction
on its second member and intersection, shows that
\(\mathcal B_f(V',N)\to\mathcal B_f(V,N)\) is an equivalence.

Interpolate the torus coordinates between \(x_{a,j}\) and
\(R_{I,j}\).  Let \(W\subseteq V'\times\mathbb A^1\) be the
open where the interpolation lands in \(V\).  It contains both
endpoint fibers and \(Z\times\mathbb A^1\).  The cover
\[
 V'\times\mathbb A^1
   =W\cup((V'\setminus Z)\times\mathbb A^1)
\]
has smaller stabilizers on the second member and intersection.
Mayer--Vietoris and affine-line invariance identify the error of
\(W\) with that of \(V'\) by projection.  Both endpoint sections
are inverses to this projection and thus induce the same map.
Their composites with the interpolation identify the inclusion
\(V'\to V\) with a map factoring through \(Z\).
Equation~\eqref{acg-fixed-zero} makes this latter map zero.
The inclusion is an equivalence on errors, so the error is zero.

For general \(S\), restriction to the open \(T[1/n!]\) identifies
the error with that for
\(T[1/n!]\to S[1/n!]\).  Open pullback of a restricted
representable is the restricted representable of its open
base change.  The first assertion applies there.
\end{proof}

\subsection{Germs along a residue-characteristic fiber}

Fix a prime \(p\), write \(i_p:T_p\hookrightarrow T\) for the
closed fiber, and put
\[
 E_{f,p}(Y,G)=i_p^*\mathcal B_f(Y,G).
\]
If \(p\notin P\), this is the zero category calculation because
\(T_p\) is empty.  A \(p\)-neighborhood in a model \(Y\) is an
open containing its entire \(\mathbb F_p\)-fiber.

\begin{lemma}\label{acg-neighborhood}
An invariant \(p\)-neighborhood \(V\subseteq Y\) induces an
equivalence \(E_{f,p}(V,G)\simeq E_{f,p}(Y,G)\).
This remains true with any number of auxiliary affine parameters.
\end{lemma}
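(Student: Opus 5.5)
The plan is to reduce the statement to showing that the error of the complementary open vanishes after restriction to the fibre \(T_p\), and then to apply Mayer--Vietoris.

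Let \(Z=Y\setminus V\). It is an invariant closed subscheme of \(Y\), and by hypothesis its \(\mathbb F_p\)-fibre is empty, so \(p\) is invertible on \(Z\). The invariant open \(Y[1/p]\) contains \(Z\), and together with \(V\) it gives an invariant open cover
\[
 Y=V\cup Y[1/p].
\]
Adjoining \(1/p\) is one of the permitted open restrictions of a model, so this is a cover by permutation models in the sense of this section. By Lemma~\ref{acg-excision}(1), the errors form a pushout square. Applying the exact functor \(i_p^*\) gives a pushout square of the \(E_{f,p}\). So it suffices to prove
\[
 E_{f,p}(U,G)=0\qquad\text{for every invariant open }U\text{ with }p\text{ invertible on }U.
\]
This applies to both \(U=Y[1/p]\) and \(U=V[1/p]\). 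Once it holds, the pushout square makes \(E_{f,p}(V,G)\to E_{f,p}(Y,G)\) an equivalence.

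For this vanishing, note that \(U_S\) and \(U/G\) both lie over \(\operatorname{Spec}\mathbb Z[1/p]\). By Lemma~\ref{acg-quotient-base-change},
\[
 U_S/G=(U/G)\times_{\mathbb Z}S=(U/G)\times_{\mathbb Z}S[1/p].
\]
Hence every smooth \(S\)-scheme mapping to \(U_S/G\) factors through the open \(j:S[1/p]\to S\). It follows that \(M_S(h_{U_S/G})\simeq j_\sharp M_{S[1/p]}(h_{U_{S[1/p]}/G})\), and likewise over \(T\), via \(j':T[1/p]\to T\). Smooth base change \(f^*j_\sharp\simeq j'_\sharp f^*\) is compatible with the exchange maps, so the error \(\mathcal B_f(U,G)\) is \(j'_\sharp\) of an object on \(T[1/p]\). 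Finally, \(i_p^*j'_\sharp=0\), because \(T_p\) and \(T[1/p]\) are disjoint; this is localization for open and closed complements, and it holds in \(\mathrm{DA}\) with any coefficients.

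For the auxiliary affine parameters, the same argument applies with \(\mathbb A^d\) factors included, since everything above is compatible with products with \(\mathbb A^d\). Alternatively one can use the affine-line invariance of Lemma~\ref{acg-excision}(1).

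The main obstacle is the identification of the restricted representable of \(U_S/G\) with an extension by zero from \(S[1/p]\). Since \(U_S/G\) is singular, this has to be checked on the smooth site: a smooth test scheme \(X\to S\) admits a morphism to \(U_S/G\) only if it factors through \(S[1/p]\). That holds because \(X\times_S S_p\) maps into the empty fibre, so \(p\) is invertible on \(X\). This shows the representable is \(j_\sharp\) of the restricted representable over \(S[1/p]\) before localization, and \(j_\sharp\) commutes with Nisnevich and \(\mathbb A^1\)-localization and with Tate stabilization, as required.
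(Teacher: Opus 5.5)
Your proposal is correct and follows the paper's proof: the same invariant cover \(Y=V\cup Y[1/p]\) with Mayer--Vietoris, and the vanishing under \(i_p^*\) of the errors of \(Y[1/p]\) and \(V[1/p]\). That vanishing comes, as in the paper, from the extension-by-zero identification \(M_S(h_Q)\simeq j_{S\sharp}M_{S[1/p]}(h_Q)\) on the smooth site, open base change \(f^*j_{S\sharp}\simeq j_{T\sharp}f_{[1/p]}^*\), and \(i_p^*j_{T\sharp}=0\); the affine parameters are handled identically.
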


\begin{proof}
There is an invariant open cover \(Y=V\cup Y[1/p]\).
For an arbitrary scheme \(Q\) over \(S[1/p]\), even if \(Q\)
is singular, extension by zero on the smooth site gives
\[
 M_S(h_Q)\simeq
 j_{S\sharp}M_{S[1/p]}(h_Q),\qquad
 j_S:S[1/p]\hookrightarrow S.
\]
One can verify this before localization by left Kan extension
of representables, and then pass through free linearization.
Open base change gives
\(f^*j_{S\sharp}\simeq j_{T\sharp}f_{[1/p]}^*\).
Both terms of the error for \(Y[1/p]\), and for its intersection
with \(V\), consequently restrict to zero under \(i_p^*\),
since \(i_p^*j_{T\sharp}=0\).
Mayer--Vietoris proves the assertion.  The argument is unchanged
after adjoining affine parameters.  Only localization in
ordinary \(\mathrm{DA}\) is used here;
this localization is available with arbitrary coefficients
\cite[Theorem~6.2.1]{CD}.
\end{proof}

A map on a germ is a map defined on a \(p\)-neighborhood, with
two maps identified after restriction to a common such neighborhood.
Intersections give common refinements, and inverse images allow
composition.  Lemma~\ref{acg-neighborhood} permits application of
\(E_{f,p}\) to these germs, or equivalently to the localization
at their open inclusions.

Coordinatewise \(p\)-th powers on all torus and affine coordinates
give a germ
\[
 P_Y:Y\dashrightarrow Y.
\]
Its domain \(Y\cap P^{-1}(Y)\) is a \(p\)-neighborhood, because
its reduction is absolute Frobenius of the \(\mathbb F_p\)-model.
For a finite disjoint union this operation fixes the component.
It induces an endomorphism \(\psi_Y\) of \(E_{f,p}(Y,G)\).
Coordinate permutations, fixed diagonals, open inclusions,
projections, induction on components, and sections at \(0,1\)
commute with these germs.  All affine parameters are raised to
their \(p\)-th powers as part of \(P_Y\).

\begin{lemma}\label{acg-lifts}
Two equivariant germs \(a,b:Y\dashrightarrow Z\) with the same
reduction modulo \(p\) induce homotopic maps under \(E_{f,p}\).
Their coordinate interpolation supplies a specified such homotopy.
In particular it supplies
\[
 E_{f,p}(P_Za)\simeq E_{f,p}(aP_Y).
\]
The homotopy is constant on a parameter face where the two
coordinate formulas agree exactly.  It factors through a fixed
coordinate diagonal on a face where both formulas do so.
\end{lemma}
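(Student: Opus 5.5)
The plan is the standard "straight-line homotopy between two lifts" argument, run inside \(E_{f,p}\) and using only the \(\mathbb A^1\)-invariance and neighborhood properties already established. Write \(a\) and \(b\) in coordinates. Since both reduce to the same map modulo \(p\), each torus coordinate satisfies \(b^*x_{a,j}=a^*x_{a,j}+p\,u_{a,j}\) for some function \(u_{a,j}\) on a common \(p\)-neighborhood, and likewise for the affine coordinates. Equivariance of \(a\) and \(b\) lets us choose the \(u_{a,j}\) permuted compatibly. One way to do this is to set \(u=(b^*x-a^*x)/p\) canonically; this is legitimate where \(p\) is a nonzerodivisor, which holds on the integral models.

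The interpolation is
\[
 h^*x_{a,j}=(1-t)\,a^*x_{a,j}+t\,b^*x_{a,j}=a^*x_{a,j}+t\,p\,u_{a,j},
\]
and it defines an equivariant map \(h:W\to Z\). Here \(W\subseteq Y\times\mathbb A^1\) is the open set where the torus coordinates stay invertible and the point lands in \(Z\).

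The key step is to show that \(W\) is an invariant \(p\)-neighborhood of the \(p\)-fiber of \(Y'\times\mathbb A^1\), where \(Y'\) is the common domain of \(a\) and \(b\). This holds because modulo \(p\) the map \(h\) reduces to \(a\circ\mathrm{pr}\) on the whole \(\mathbb F_p\)-fiber, which lands in \(Z\) with invertible coordinates. Lemma~\ref{acg-neighborhood}, applied with an auxiliary affine parameter, then gives
\[
 E_{f,p}(W,G)\simeq E_{f,p}(Y'\times\mathbb A^1,G)\simeq E_{f,p}(Y',G).
\]
The second equivalence is affine-line invariance from Lemma~\ref{acg-excision}(1), transported through \(i_p^*\). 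The sections \(t=0\) and \(t=1\) both lie in \(W\), and each is a section of the projection. Since the projection is an equivalence, the two sections induce the same map, and this identification is the specified homotopy. Composing with \(h\) gives \(E_{f,p}(a)\simeq E_{f,p}(b)\).

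For the particular case, take the pair \(P_Z a\) and \(a P_Y\). They agree modulo \(p\) because Frobenius commutes with every map of \(\mathbb F_p\)-schemes, and both are equivariant because the \(p\)-th power commutes with permutations. The face statements follow from the formula for \(h\). Where \(a^*x=b^*x\) exactly, \(u=0\), so the homotopy is constant in \(t\). Where both formulas satisfy the equalities defining a coordinate diagonal, the convex combination satisfies the same linear equalities, so \(h\) factors through that diagonal.

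The main obstacle I expect is making the \(u_{a,j}\), and hence \(h\), genuine maps of permitted models rather than merely defined on \(S\). They must be defined over \(\mathbb Z\) on an open of a model, and they must be equivariant. Both points need care when \(p\) is a zero divisor after base change, which is why the division by \(p\) is done on the integral models before base changing. Beyond that, I would check coherence of the specified homotopy with composition of germs, which follows from uniqueness up to the contractible choice of sections of \(\mathbb A^1\)-projections.
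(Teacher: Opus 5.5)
Your proposal is correct and is essentially the paper's argument: interpolate coordinatewise on the common domain, note that the resulting open contains the whole $\mathbb F_p$-fiber times $\mathbb A^1$ because the interpolation is constant there, use Lemma~\ref{acg-neighborhood} with affine-line invariance to identify the two endpoint sections, and read the face statements off the formula. The one bookkeeping step you omit is the paper's preliminary restriction, when the target is a finite disjoint union, to the open-and-closed $p$-neighborhood on which $a$ and $b$ land in the same component, which is needed for the coordinate interpolation to make sense; your division by $p$ to produce the $u_{a,j}$ is harmless but unnecessary.
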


\begin{proof}
Restrict to a common domain.  If necessary, further restrict to
the open-and-closed \(p\)-neighborhood where the two maps land
in the same target component.  Interpolate each ambient coordinate
by \((1-s)a+s b\).  The open where the torus coordinates are
units and the image lies in \(Z\) contains the entire
characteristic-\(p\) fiber times \(\mathbb A^1\): on that fiber
the interpolation is independent of \(s\).
Lemma~\ref{acg-neighborhood} and affine-line invariance therefore
turn it into the asserted homotopy.  Equal coordinate formulas
remain equal throughout this interpolation, which gives the
relative statements.  Finally, \(P_Za\) and \(aP_Y\) have the
same reduction by naturality of absolute Frobenius over
\(\mathbb F_p\).
\end{proof}

For an object with endomorphism \(\psi\), use the notation
\[
 \operatorname{Tel}_{\psi}(E)=
 \operatorname*{colim}
       (E\xrightarrow{\psi}E\xrightarrow{\psi}E\longrightarrow\cdots).
\]
The induced endomorphism on this telescope is invertible.
Colimits of diagrams with commuting endomorphisms commute
with this construction.

\begin{lemma}\label{acg-sequential}
A map \(a:E\to F\) with a specified homotopy
\(a\psi_E\simeq\psi_Fa\) induces a map of these telescopes.
If the map and its specified square factor through a third such
diagram, the telescope map has the same factorization.
The construction also applies to a finite diagram equipped
with compatible specified squares.
\end{lemma}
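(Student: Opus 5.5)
The plan is to build the telescope map directly from the standard presentation of a sequential colimit as a mapping telescope, so that all three assertions reduce to statements about coherent maps of $\mathbb N$-indexed diagrams. Write $\operatorname{Tel}_\psi(E)$ as the colimit of the functor $\underline E:\mathbb N\to\mathcal C$ sending every object to $E$ and every successor arrow to $\psi_E$. A map of telescopes will come from a natural transformation $\underline E\to\underline F$, that is, a map of functors out of $\mathbb N$. Since $\mathbb N$ is freely generated by its successor arrows, a natural transformation between two such functors is determined, up to a contractible space of choices, by its components together with a homotopy filling each elementary square. Concretely, the $\infty$-category $\operatorname{Fun}(\mathbb N,\mathcal C)$ is equivalent to the category of sequences of objects with specified successor maps, and a morphism in it amounts to componentwise maps together with specified homotopies for the successor squares.

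\textbf{Step 1 (existence).} Take the constant component $a:E\to F$ at every level $n$, and use the given homotopy $h:a\psi_E\simeq\psi_F a$ to fill every square. This defines a transformation $\underline a:\underline E\to\underline F$, and applying $\operatorname*{colim}$ gives $\operatorname{Tel}(a)$. Because $\operatorname{colim}$ is a functor on $\operatorname{Fun}(\mathbb N,\mathcal C)$, the construction is functorial in the pair $(a,h)$ and compatible with composition. Composing specified squares corresponds to pasting the homotopies.

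\textbf{Step 2 (factorization).} Suppose $a=c\circ b$ with $b:E\to G$, $c:G\to F$, specified squares $h_b$, $h_c$, and an identification of the pasted square $h_c\ast h_b$ with $h$. Then $\underline a\simeq\underline c\circ\underline b$ in $\operatorname{Fun}(\mathbb N,\mathcal C)$. Applying the colimit functor gives $\operatorname{Tel}(a)\simeq\operatorname{Tel}(c)\operatorname{Tel}(b)$. In particular, if $\operatorname{Tel}_{\psi}(G)=0$ (as will happen when the third diagram has zero error), the telescope map is zero.

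\textbf{Step 3 (finite diagrams).} For a finite diagram $K\to\mathcal C$ of objects with endomorphisms and compatible specified squares, I would interpret the datum as a functor $K\to\operatorname{Fun}(B\mathbb N,\mathcal C)\to\operatorname{Fun}(\mathbb N,\mathcal C)$, using restriction along $\mathbb N\to B\mathbb N$. Then I compose with $\operatorname*{colim}$. Commutation of colimits with colimits gives the stated compatibility of the construction with colimits of diagrams.

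The main obstacle is the coherence claim in Step 1: that a single homotopy per square suffices with no higher data. I would settle it by observing that $\mathbb N$ is the nerve of the free category on the graph $0\to1\to2\to\cdots$. Hence restriction $\operatorname{Fun}(\mathbb N\times\Delta^1,\mathcal C)\to$ (data on the generating squares) is a trivial fibration, by inner-anodyne extension along spine inclusions. The same argument, applied to $\mathbb N\times\Delta^2$, gives the factorization in Step 2.
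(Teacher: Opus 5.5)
Your proposal is correct and is essentially the paper's argument. The paper treats the specified square as an endomorphism of \(a\) in the arrow category (in \(\operatorname{Fun}(\Delta^2,\mathcal C)\) or \(\operatorname{Fun}(K,\mathcal C)\) for factorizations and finite diagrams) and iterates it, using that a string of composable arrows extends along the free sequential category with contractible choices, then computes the colimit levelwise. Your spine/inner-anodyne description of \(\operatorname{Fun}(\mathbb N\times\Delta^1,\mathcal C)\simeq\operatorname{Fun}(\mathbb N,\operatorname{Fun}(\Delta^1,\mathcal C))\) and your restriction along \(\mathbb N\to B\mathbb N\) are the same construction in different packaging.
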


\begin{proof}
The specified square is an endomorphism of the object
\(a\) in the arrow infinity-category.  Iterating it gives a
sequence in that category.  A string of composable arrows extends
to the free sequential indexing category with a contractible
space of choices, so this specifies the required sequential
diagram.  Its colimit is computed on source and target separately.
For a factorization, or a finite diagram, use its functor category
instead of the arrow category.  This proves all assertions.
\end{proof}

\subsection{Arithmetic stabilizer induction}

\begin{proposition}\label{acg-arithmetic}
For every permutation model and every prime \(p\),
\[
 \operatorname{Tel}_{\psi}
       i_p^*\mathcal B_f(Y,G)=0.
\]
For the full torus model, \(\psi\) is induced by the actual
coordinate \(p\)-power map on its geometric quotient.  In
particular, for \(G=\Sigma_n\) the conclusion is the vanishing
of the power telescope of the canonical geometric symmetric-power
base-change error.
\end{proposition}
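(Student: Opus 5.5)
We argue by induction on the maximal geometric stabilizer order \(m\), simultaneously for all permutation models, invariant opens and auxiliary affine parameters, mirroring the proof of Proposition~\ref{acg-tame}. We replace the averaging retraction by a Frobenius-type retraction that is only defined after passing to the power telescope. First we record that \(\operatorname{Tel}_\psi i_p^*\mathcal B_f\) inherits the formal properties of Lemma~\ref{acg-excision}: Mayer--Vietoris, affine-line invariance, excision for stabilizer-preserving \'etale neighborhoods, induction \(G\times_HV\), and vanishing for smooth quotients. This holds because \(i_p^*\) is exact and \(\operatorname{Tel}_\psi\) commutes with colimits of diagrams with commuting endomorphisms, and the relevant maps commute with the germs \(P_Y\). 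Lemmas~\ref{acg-neighborhood} and~\ref{acg-sequential} make this precise. The free case is then immediate, and the normalizer neighborhoods \eqref{acg-normalizer-neighborhood} reduce the inductive step to the situation \eqref{acg-normalizer-case}: \(H\triangleleft N\), \(Z=V^H\), with \(V\setminus Z\) of smaller stabilizers and \(\mathcal B_f(Z,N)=0\) by \eqref{acg-fixed-zero}.

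In this situation we try to show that \(\psi\) on \(E_{f,p}(V,N)\) factors, up to the homotopies of Lemma~\ref{acg-lifts}, through \(E_{f,p}(Z,N)=0\), modulo terms with smaller stabilizers. We split \(H\) into its \(p\)-part and its prime-to-\(p\) part. Since we work after \(i_p^*\) and may restrict to \(p\)-neighborhoods (Lemma~\ref{acg-neighborhood}), orbits of length prime to \(p\) can still be averaged as in Proposition~\ref{acg-tame}. The averaging coefficients \(|I|^{-1}\) are now units on a \(p\)-neighborhood, although not over \(\mathbb Z\); we handle this by an open restriction inverting \(|I|\), which is a permitted model operation. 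For an \(H\)-orbit \(I\) of \(p\)-power length \(p^k\), we instead use symmetric-polynomial formulas. Modulo \(p\), the \(p^k\)-th power of each coordinate \(x_{a,j}\) on a neighborhood of the diagonal agrees with a function of the elementary symmetric functions of the block. More precisely, \(\prod_{a\in I}(t-x_{a,j})\equiv (t-y)^{p^k}\) on the diagonal fiber. Following this, we define an \(N\)-equivariant germ \(\sigma:V\dashrightarrow Z\) whose composite with the inclusion \(Z\to V\) has the same reduction mod \(p\) as \(P_V^{k}\) (iterated Frobenius). For example, we send each block coordinate to a suitable expression in the invariants \(\sum_{a\in I}x_{a,j}^{p^k}\) and the products, checked to reduce to \(x^{p^k}\) on \(\mathbb F_p\)-points of \(Z\) and to be equivariant. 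Lemma~\ref{acg-lifts} then gives a homotopy \(E_{f,p}(\iota\sigma)\simeq\psi^k\) on a \(p\)-neighborhood. As in Proposition~\ref{acg-tame}, we use the cover \(V=V'\cup(V\setminus Z)\) and induction to pass to the neighborhood \(V'\) where \(\sigma\) is defined.

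It follows that \(\psi^k\) on \(E_{f,p}(V',N)\) factors through \(E_{f,p}(Z,N)=0\), so it is nullhomotopic. The specified squares of Lemma~\ref{acg-sequential} make the telescope inherit the factorization, and hence \(\operatorname{Tel}_\psi E_{f,p}(V',N)=0\). Combining this with the inductive hypothesis on the smaller-stabilizer members of the cover, and with the excision pushouts, gives the claim for \(Y\). For the full torus model, \(P_Y\) is a genuine equivariant endomorphism defined over \(\mathbb Z\). It descends to the quotient, which by Lemma~\ref{acg-quotient-base-change} commutes with base change, so \(\psi\) is induced by the actual \(p\)-power map on \(Y/G\). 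Specializing to \(G=\Sigma_n\) gives the symmetric-power statement.

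The main obstacle is the wild step, namely constructing the equivariant germ \(\sigma\) for a \(p\)-group block and verifying that \(\iota\sigma\) and \(P^k\) agree modulo \(p\) on an entire \(p\)-neighborhood. It is not enough for them to agree on \(Z\) alone, and mixed orbits make the construction delicate. If a single power does not suffice, I would first try taking a higher power \(p^{k'}\) and allowing the homotopy of Lemma~\ref{acg-lifts} to pass through intermediate fixed diagonals, using its relative statements. This would show compatibility of the homotopies with the coordinate diagonals, so that they descend to the telescope coherently.
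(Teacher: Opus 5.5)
\textbf{There is a genuine gap at the step you identify as the main obstacle, and no choice of formulas or higher powers can close it.}

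Lemma~\ref{acg-lifts} needs \(\iota\sigma\) and the power germ to have the same reduction on an entire \(p\)-neighborhood. That means agreement on the whole fibre \(V'_{\mathbb F_p}\). This fibre is open in the ambient \(\mathbb F_p\)-fibre, and once \(H\neq1\) acts faithfully it contains points off \(Z\). On this fibre the two maps behave differently:
\begin{itemize}
\item \(\iota\sigma\) sends the whole fibre into \(Z_{\mathbb F_p}\).
\item The reduction of \(P^k\) is a power of absolute Frobenius. It is injective on geometric points, so it sends points with distinct block coordinates to points with distinct block coordinates.
\end{itemize}
So no equivariant retraction \(\sigma\) can satisfy \(\iota\sigma\equiv P^k\) modulo \(p\), and the homotopy \(E_{f,p}(\iota\sigma)\simeq\psi^k\) is not available.

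There is also a secondary problem with your orbit-by-orbit formulas. Plain averaging on prime-to-\(p\) orbits restricts to the identity on \(Z\), not to a \(p\)-power. Orbits whose length is neither a \(p\)-power nor prime to \(p\) are not covered. And \(H\) need not split into a \(p\)-part and a \(p'\)-part.

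\textbf{The missing idea: the homotopy only has to exist near \(Z\), and the inductive hypothesis must be used inside the homotopy argument, not only for excision.}
\begin{enumerate}
\item The paper takes \(R_{I,j}=(b_I^{-1}e_{a_I}(x_{a,j}:a\in I))^{q/a_I}\), where \(a_I=p^{v_p(|I|)}\) and \(b_I=\binom{|I|}{a_I}\) is prime to \(p\). Then \(R_{I,j}=x_j^q\) holds \emph{exactly} on \(Z\) over the localized integers, uniformly for all orbit sizes.
\item It uses the integral interpolation \(\mathcal H_\lambda=(1-\lambda)x^q+\lambda R\) only on the open \(W\subseteq V'\times\mathbb A^1\) where it lands in \(V\). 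This \(W\) contains both endpoint fibres and \((V'\cap Z)\times\mathbb A^1\), but not the whole characteristic-\(p\) fibre.
\item The other member \((V'\setminus Z)\times\mathbb A^1\) of the cover, and its intersection with \(W\), have smaller stabilizers. Mayer--Vietoris, induction and affine-line invariance therefore make the projection \(W\to V'\) an equivalence on telescoped errors. Hence the two endpoint sections induce the same map there.
\item Lemma~\ref{acg-lifts} is applied only to the square \(P_V\mathcal H\simeq\mathcal H P_W\), whose two sides genuinely have equal reductions. Its relative statements at \(\lambda=0\) (exact agreement) and \(\lambda=1\) (factorization through \(Z\)) are used as well. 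Through Lemma~\ref{acg-sequential}, this lets \(\mathcal H\) induce a map of telescopes.
\item The conclusion: the \(q\)-power map \(V'\dashrightarrow V\) is zero on telescopes, because one endpoint factors through \(\operatorname{Tel}_\psi E_{f,p}(Z,N)=0\). It is also invertible, because \(V'\to V\) is a telescopic equivalence and \(\psi^e\) is invertible. So the telescoped error vanishes.
\end{enumerate}
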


\begin{proof}
Remove a constant kernel of the action, which leaves the quotient
and error unchanged.  Induct on the maximum stabilizer order,
simultaneously for all invariant opens and auxiliary affine
coordinates.  The free case is zero before taking a telescope;
the empty model has zero error as well.

The stabilizer neighborhoods
\eqref{acg-normalizer-neighborhood} are defined over
\(\mathbb Z\) and their structural maps commute with power germs.
Excision, induction on the complements, and the compatibility
with induction of group actions reduce the problem to
\eqref{acg-normalizer-case}.  Here
\(E_{f,p}(Z,N)=0\) by \eqref{acg-fixed-zero}.

For an \(H\)-orbit \(I\) of torus blocks put
\[
 d_I=|I|,\qquad a_I=p^{v_p(d_I)},\qquad
 b_I=\binom{d_I}{a_I}.
\]
The integer \(b_I\) is prime to \(p\): if \(d_I=a_Id'_I\)
with \(p\nmid d'_I\), the coefficient of \(t^{a_I}\) in
\((1+t)^{d_I}=(1+t^{a_I})^{d'_I}\) modulo \(p\) is \(d'_I\).
Inverting these finitely many integers restricts the model to
a \(p\)-neighborhood and does not change \(E_{f,p}\).

Choose \(q=p^e\) divisible by every \(a_I\), and put
\[
 R_{I,j}(x)=
 \left(b_I^{-1}
       e_{a_I}(x_{a,j}:a\in I)\right)^{q/a_I}.
\]
On the coordinate diagonal fixed by \(H\) there is the exact
identity \(R_{I,j}=x_j^q\), over the localized integer ring.
Moreover \(N\) permutes the \(H\)-orbits of a given size, so
these formulas are \(N\)-equivariant.  On each auxiliary affine
coordinate use its \(q\)-th power.

Restrict to the invariant open \(V'\subseteq V\) on which the
displayed torus coordinates are units, their image lies in \(Z\),
and the coordinate \(q\)-power map lands in \(V\).
This defines \(\rho:V'\to Z\).  The open \(V'\) contains
\(Z_{\mathbb F_p}\), because on \(Z\) the formulas are the
\(q\)-power germ and modulo \(p\) are Frobenius.
It need not contain every characteristic-zero point of \(Z\).
Nevertheless \(V'\cup(V\setminus Z)\) is a \(p\)-neighborhood
of \(V\).  Induction on \(V\setminus Z\) and on its intersection
with \(V'\), together with Mayer--Vietoris and
Lemma~\ref{acg-neighborhood}, proves that the inclusion \(V'\to V\)
is an equivalence on power-telescoped errors.

Consider the interpolation
\[
 \mathcal H_\lambda(x_{a,j})=
       (1-\lambda)x_{a,j}^{q}+\lambda R_{I,j}(x),
       \qquad a\in I.
\]
Keep the already \(q\)-powered affine coordinates fixed.
Let \(W\subseteq V'\times\mathbb A^1\) be the open where this
defines a map to \(V\).  It contains both complete endpoint
fibers and \((V'\cap Z)\times\mathbb A^1\), since on that
fixed locus the interpolation is constant.  Hence
\[
 V'\times\mathbb A^1
   =W\cup((V'\setminus Z)\times\mathbb A^1).
\]
The second member and intersection have smaller stabilizers.
All inclusions and the projection commute with power germs.
Taking telescopes in Mayer--Vietoris, and using affine-line
invariance, gives an equivalence
\begin{equation}\label{acg-projection}
 \operatorname{Tel}_{\psi}E_{f,p}(W,N)
       \longrightarrow
 \operatorname{Tel}_{\psi}E_{f,p}(V',N)
\end{equation}
induced by projection.  Its two endpoint sections are both
inverses and therefore agree.

The interpolation itself need not commute strictly with powers
over \(\mathbb Z\); this last point must be supplied.
Apply Lemma~\ref{acg-lifts} to the germs
\(P_V\mathcal H\) and \(\mathcal H P_W\).
Their reductions agree, with \(P_W\) also raising \(\lambda\)
to its \(p\)-th power.  Their coordinate interpolation supplies
the square required by Lemma~\ref{acg-sequential}.
At \(\lambda=0\) the formulas agree exactly, since both
are coordinate \(pq\)-th powers.  The specified square is
therefore constant on that endpoint.
At \(\lambda=1\) both formulas, and their interpolation,
land in \(Z\); the specified square is the inclusion of the
corresponding square for \(\rho:V'\to Z\).
These relative statements give compatible squares for the
finite diagram consisting of \(\mathcal H\), its endpoint
sections, and this factorization through \(Z\).

Consequently the induced telescope map has, at its zero
endpoint, the canonical \(q\)-power germ \(V'\dashrightarrow V\).
At its other endpoint it factors through
\(\operatorname{Tel}_{\psi}E_{f,p}(Z,N)=0\).
The endpoint sections in \eqref{acg-projection} agree, so the
\(q\)-power map on these telescopes is zero.  It is also an
equivalence: it is the inclusion \(V'\to V\), already proved
an equivalence on telescopes, composed with the invertible
endomorphism \(\psi^e\).  Thus both telescoped errors are zero.
This proves the induction.
\end{proof}

The two conclusions needed later are therefore integral in their
geometry: each fixed-degree error vanishes on the tame open
\(T[1/n!]\), and its restriction to every characteristic-\(p\)
fiber vanishes after the indicated power telescope.  No assertion
has been made that pullback of a singular restricted representable
is its intrinsic fiber representable.  The latter comparison is
exactly the error being studied.

\section{Cycle presentations away from the residue characteristics}
\label{sec:acc-cycles}

Retain the arithmetic base and coefficients in
\eqref{aca-arithmetic-base}.

For a noetherian \(B\)-scheme \(S\), write
\(X_{S,r}=\mathbb G_{m,S}^{r}\). On its smooth Nisnevich site set
\[
 D_{S,r}=\coprod_{n\geq0}h_{\operatorname{Sym}_S^n X_{S,r}}.
\]
Let \(C^+_{S,r}\) denote the sheaf of effective integral universal
finite relative cycles on \(X_{S,r}\).
The coproduct is a sheaf coproduct, allowing locally constant degrees.
Write \(\mathcal Z_{S,r}\) for the integral finite-correspondence
sheaf represented by \(X_{S,r}\), with transfers forgotten.
For a commutative monoid sheaf \(D\), the notation
\(D[P^{-1}]\) means localization by the multiplicative monoid
generated by \(P\), formed as the filtered colimit of multiplication
maps. This is a localization of the monoid, before taking any free
linear motive.

\begin{lemma}[Uniform cycle denominators]\label{lem:acc-denominators}
Suppose that \(S\) is seminormal. There is a canonical inclusion
\(C^+_{S,r}\hookrightarrow D_{S,r}\), inducing an isomorphism
\begin{equation}\label{eq:acc-positive-localization}
 C^+_{S,r}[P^{-1}]\xrightarrow{\ \sim\ }D_{S,r}[P^{-1}].
\end{equation}
For a family of constant degree \(n\geq1\), one denominator that
works simultaneously at every point is
\begin{equation}\label{eq:acc-denominator-bound}
 m_{P,n,r}=
 \prod_{\substack{p\in P\\p\leq n^r}}
       p^{\lfloor\log_p(n^r)\rfloor}.
\end{equation}
This is the \(P\)-primary part of
\(\operatorname{lcm}(1,\ldots,n^r)\); an empty product is one.
Multiplication by this integer makes the family an integral universal
cycle. The identifications respect base change, addition, and finite
coordinate power maps.
\end{lemma}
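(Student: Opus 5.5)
We will compare both sides of \eqref{eq:acc-positive-localization} section by section. For a smooth \(S\)-scheme \(U\) (which is again seminormal, being smooth over a seminormal base), we compare effective universal relative cycles on \(X_{U,r}/U\) with morphisms \(U\to\operatorname{Sym}^n_S X_{S,r}\). The tool is Rydh's theory of families of cycles. A morphism \(U\to\operatorname{Sym}^n_{S}X_{S,r}\) corresponds to a family of zero cycles of degree \(n\) on \(X_{U,r}/U\) (a relative cycle in Rydh's sense). Over a seminormal base, Rydh's comparison identifies such families with Suslin--Voevodsky relative cycles whose coefficients may be nonintegral only at multiplicities divisible by residue characteristics. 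The inclusion \(C^+_{S,r}\hookrightarrow D_{S,r}\) sends an effective universal cycle of locally constant degree \(n\) to its classifying map to the \(n\)th symmetric power, using the Chow-form (divided-power) construction. Injectivity holds because a cycle on a reduced base is determined by its fibers over points, and the symmetric power map records these fibers.

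Next we prove that every section of \(D_{S,r}\) becomes universal after multiplication by \(m_{P,n,r}\). Let \(\alpha:U\to\operatorname{Sym}^n X_{S,r}\) be such a section, and let \(u\in U\) be a point of residue characteristic \(p\). The obstruction to integrality of the pullback to \(u\) (and to all traits through \(u\)) comes from comparing specialisations of cycles with inseparable residue extensions. The relevant denominators divide the \(p\)-part of the multiplicities of the cycle and of the degrees of inseparable field extensions of points in the fibre. Every point of the fibre of \(X_{S,r}\) is cut out inside a degree-\(n\) family of \(\mathbb G_m^r\)-points by \(r\) coordinate functions. Hence its residue field extension has degree at most \(n\) in each coordinate, so at most \(n^r\) in total, and the relevant \(p\)-powers are bounded by \(p^{\lfloor\log_p n^r\rfloor}\). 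Taking the product over \(p\in P\), and noting that primes outside \(P\) are units on \(S\) so contribute no obstruction, gives \eqref{eq:acc-denominator-bound}. This shows that \(m_{P,n,r}\cdot\alpha\) lies in \(C^+\). Together with injectivity, this yields the isomorphism after inverting \(P\): the filtered colimit of multiplication maps on \(D\) absorbs the bounded denominators degree by degree.

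Finally, compatibilities. Base change holds because Rydh's families and universal relative cycles both pull back functorially and the classifying map is natural. Addition corresponds to the sum map \(\operatorname{Sym}^n\times\operatorname{Sym}^{n'}\to\operatorname{Sym}^{n+n'}\). Coordinate power maps act by pushforward of cycles, which matches the induced maps on symmetric powers. The main obstacle is the uniform denominator bound: Rydh's results give pointwise integrality up to \(p\)-power denominators, and making the exponent uniform in the point requires the \(n^r\) bound on inseparable degrees. I would first try to reduce to the case where \(U\) is the spectrum of a seminormal local ring with residue field of characteristic \(p\), then to traits via the valuative criterion for universal cycles, and bound there.
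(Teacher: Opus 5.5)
Your proposal follows essentially the same route as the paper. Both arguments use seminormality of smooth tests, Rydh's identification of degree-\(n\) families with effective quasi-integral cycles, and the bound \([L:\kappa(u)]\le n^r\) coming from the monic degree-\(n\) coordinate polynomials. Both then use the fact that denominators divide the inseparable degree, and derive injectivity from uniqueness of the underlying cycles together with cancellation.

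Two details are treated more explicitly in the paper. It replaces \(\Gamma^n\) by \(\operatorname{Sym}^n\) using flatness of the torus. It also passes from pointwise to universal integrality of \(m_{P,n,r}\alpha\), which you left as a plan via traits; the paper does this by noting that multiplication preserves the limit-cycle condition and then invoking Cisinski--D\'eglise's Definition~8.1.47 and Lemma~8.1.48.
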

\begin{proof}
Every smooth test \(U/S\) is seminormal, since seminormality ascends
under regular morphisms \cite[Proposition~38]{Kollar}. Rydh's
Theorem~10.17 in \cite{RydhCycles}, which has no characteristic
restriction, identifies families with effective rational cycles
satisfying the limit-cycle condition and having quasi-integral
specializations at every point. Integral universal cycles satisfy
these conditions. Families of degree \(n\) with finite support are
represented by \(\Gamma_U^n(X_{U,r})\), which equals
\(\operatorname{Sym}_U^n(X_{U,r})\) because \(X_{U,r}/U\) is flat
\cite[Corollary~4.2.5]{RydhZero}. This gives the canonical inclusion.

Fix a point \(u\in U\) and a support point of a degree-\(n\)
family over it, with residue field \(L/\kappa(u)\). Each of its
\(r\) torus coordinates satisfies the monic degree-\(n\) polynomial
obtained by projecting the symmetric family to that coordinate.
Those coordinates generate \(L\), so
\[
 [L:\kappa(u)]\leq n^r.
\]
By \cite[Definitions~8.4--8.5 and Theorem~8.6]{RydhCycles}, a
coefficient denominator divides the inseparable discrepancy, which
divides the inseparable degree. In characteristic zero the denominator
is therefore one. In characteristic \(p>0\), it is a \(p\)-power
at most \(n^r\), and \(p\in P\). Hence it divides
\(m_{P,n,r}\). Although \(P\) can be infinite, only finitely many
of its primes occur in \eqref{eq:acc-denominator-bound}.

Multiplication by \(m_{P,n,r}\) preserves the limit-cycle condition
and makes every point specialization integral. Definition~8.1.47
and Lemma~8.1.48 of \cite{CD} identify this pointwise condition with
integrality after arbitrary base change. Thus the resulting cycle is
integral and universal. The zero-degree family needs no denominator;
locally constant degrees are treated on their open-and-closed pieces.
This proves surjectivity in \eqref{eq:acc-positive-localization}.
Uniqueness of the underlying cycles in Rydh's theorem and cancellation
give injectivity. The same uniqueness, and the functoriality of relative
cycles, give the stated compatibilities.
\end{proof}

\begin{lemma}[Group completion]\label{lem:acc-group-completion}
For seminormal noetherian \(S/B\), there is a canonical isomorphism
\begin{equation}\label{eq:acc-group-completion}
 a_{\mathrm{Nis}}\bigl(D_{S,r}[P^{-1}]^{\mathrm{gp}}\bigr)
 \xrightarrow{\ \sim\ }
 \mathcal Z_{S,r}\otimes_{\mathbb Z}\Lambda.
\end{equation}
The right side is also the finite-correspondence sheaf with
\(\Lambda\)-coefficients.
\end{lemma}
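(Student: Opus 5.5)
The plan is to start from Lemma~\ref{lem:acc-denominators} and pass to group completions. Group completion commutes with localization of commutative monoids, because both are left adjoints built from filtered colimits and coequalizers. So \eqref{eq:acc-positive-localization} gives an isomorphism of presheaves of abelian groups
\[
 D_{S,r}[P^{-1}]^{\mathrm{gp}}\cong (C^+_{S,r})^{\mathrm{gp}}[P^{-1}].
\]
On the right side, multiplication by \(p\in P\) becomes invertible, so
\[
 (C^+_{S,r})^{\mathrm{gp}}[P^{-1}]\cong (C^+_{S,r})^{\mathrm{gp}}\otimes_{\mathbb Z}\Lambda.
\]
This step uses that localizing an abelian group at a multiplicative set of integers is tensoring with the corresponding localization of \(\mathbb Z\). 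It is compatible with the monoid localization because the filtered colimit of multiplication maps is taken in the same way in both settings.

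Next, compare \((C^+_{S,r})^{\mathrm{gp}}\) with \(\mathcal Z_{S,r}\). I will show that the natural map of presheaves \((C^+_{S,r})^{\mathrm{gp}}\to\mathcal Z_{S,r}\) is injective, and that it becomes surjective after Nisnevich sheafification.
\begin{itemize}
\item Injectivity: effective cycles embed in the free abelian group on points of \(X_{U,r}\), and cancellation holds there.
\item Surjectivity: locally every integral universal relative cycle on \(X_{U,r}/U\) is a difference of effective universal cycles. Split a universal cycle \(\alpha=\sum n_iZ_i\) into its positive and negative parts. I expect each part is again a universal relative cycle, since relative cycles are determined by generic points over \(U\), and pullback of effective parts stays effective via the specialization formalism of \cite[Section~8.1]{CD}. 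If this is problematic over non-reduced or non-normal local rings, I will instead pass to Henselian local schemes, where the finite supports over \(U\) split into local pieces.
\end{itemize}
Since \(\Lambda\) is flat over \(\mathbb Z\) and sheafification is exact, tensoring the resulting isomorphism with \(\Lambda\) and sheafifying yields \eqref{eq:acc-group-completion}. The last assertion, that \(\mathcal Z_{S,r}\otimes\Lambda\) is the \(\Lambda\)-coefficient correspondence sheaf, is the convention of Section~\ref{sec:models} together with \cite[Remark~9.1.3(3)]{CD}.

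I expect the main obstacle to be the positive/negative splitting of universal cycles. Universality, meaning integrality after arbitrary base change, need not be inherited by the effective parts when different components specialize onto a common point with cancelling multiplicities. If a direct argument fails, I will use the denominators of Lemma~\ref{lem:acc-denominators} to sidestep it. After inverting \(P\) it suffices to write \(\alpha\) as a difference of \(\Lambda\)-universal effective cycles. The positive part \(\alpha^+\) is an effective family in Rydh's sense with quasi-integral specializations, so by the lemma \(m_{P,n,r}\alpha^+\) is integral universal. Hence \(\alpha=\bigl(m\alpha^+-m\alpha^-\bigr)/m\) lies in the image after inverting \(P\). This gives surjectivity only after tensoring with \(\Lambda\), which is all the statement requires.
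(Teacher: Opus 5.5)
Your outer reductions match the paper: group completion commutes with the multiplicity localization and with sheafification, \((C^+_{S,r})^{\mathrm{gp}}[P^{-1}]\) is \((C^+_{S,r})^{\mathrm{gp}}\otimes\Lambda\), and injectivity is cancellation. The gap is stalkwise surjectivity, and none of your three routes closes it. On a seminormal but non-unibranch test scheme, the positive part of an integral universal cycle need not be a relative cycle at all, so the defect is not a denominator.

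Here is an example. Let \(U\) be the henselization at the origin of the node \(\operatorname{Spec}\mathbb Q[x,y]/(xy)\). It is a seminormal \(\operatorname{Spec}\mathbb Q\)-scheme, so \(P=\varnothing\) and \(m_{P,n,r}=1\). Take \(r=1\) with torus coordinate \(t\), and write \(\Gamma_c\) for the graph of the section \(t=c\). Then \(\alpha=\Gamma_{1+x}+\Gamma_{1+y}+\Gamma_{1+2y}-\Gamma_{1+3y}-\Gamma_1\) is integral universal. It equals \(\{t=1+x\}\) over the branch \(y=0\), and \(\{t=1+y\}+\{t=1+2y\}-\{t=1+3y\}\) over the branch \(x=0\). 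Its positive part has degree \(1\) over the first branch and \(2\) over the second. Its specializations \([t=1]\) and \(2[t=1]\) at the origin therefore disagree, and no multiple of it is a family. So your claim that \(\alpha^+\) is a Rydh family with quasi-integral specializations is false, and Lemma~\ref{lem:acc-denominators} cannot be applied to it. Every component passes through the single closed-fibre point \(t=1\), so henselian splitting of the support does not help either.

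The missing idea, which is the paper's argument, is to leave \(\alpha\) unsplit and dominate its negative part by an auxiliary effective universal cycle. Let \(W\) be the reduced union of the closures of the negatively weighted components of \(\alpha\); it is finite over \(U\). Choose \(r\) functions in the ideal of \(W\) whose restrictions form a system of parameters in the closed fibre at the finitely many closed-fibre points of \(W\). The fibrewise regular-sequence criterion and openness of flatness make part of their zero scheme flat and quasi-finite over \(U\). Henselian decomposition then extracts an open-and-closed piece \(V\) of it that is finite locally free over \(U\), closed in \(X_{U,r}\), and contains \(W\).

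Then \(\beta=[V/U]\) is effective and integral universal, and every negatively weighted component of \(\alpha\) is a generic component of \(V\). Hence \(\alpha=(\alpha+a\beta)-a\beta\) for \(a\gg0\), with both terms effective and universal because universal cycles form a group. In the example, the graph of \(t=1+3y\) serves as \(V\). This gives surjectivity integrally. The denominator lemma is needed only to pass from \(C^+_{S,r}\) to \(D_{S,r}\) after localization.
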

\begin{proof}
First prove that the sheafified group completion of \(C^+_{S,r}\)
is \(\mathcal Z_{S,r}\). Injectivity follows from cancellation.
For surjectivity, work at a Nisnevich stalk, over a noetherian
henselian local smooth test \(U\). Let \(\alpha\) be an integral
universal finite relative cycle, and let \(W\) be the reduced union
of the closures of its negatively weighted generic components.
It is finite over \(U\). If it is empty, \(\alpha\) is already
effective.

In the ideal of \(W\) on \(X_{U,r}\), choose \(r\) functions
whose restrictions form a system of parameters in the smooth closed
fibre at each point of \(W\)'s closed fibre. Semilocal prime
avoidance supplies the functions, and denominators can be cleared
without changing their parameter property at these finitely many
points. Their images form regular sequences in the regular local
rings of that fibre. The fibrewise regular-sequence criterion and
openness of flatness give an open part \(V_0\) of their common zero
scheme which is flat and quasi-finite over \(U\) and contains those
points \cite[Tags~00MG and~00RC]{Stacks}. It contains all of \(W\):
the omitted part of \(W\) is finite over the local base and has empty
closed fibre, so it is empty.

Henselian quasi-finite decomposition gives an open-and-closed part
\(V\subseteq V_0\), finite over \(U\), containing the whole closed
fibre \cite[Tag~02LN]{Stacks}. The same finite-support argument
shows that \(W\subseteq V\). Thus \(V/U\) is finite locally free.
Its map to the separated \(U\)-scheme \(X_{U,r}\) is a closed
immersion: it is an immersion and is proper over \(X_{U,r}\).
The fundamental relative cycle \(\beta=[V/U]\) is effective and
integral universal, and every negatively weighted generic component
of \(\alpha\) occurs in \(\beta\) with positive multiplicity.
For a sufficiently large integer \(a\), therefore,
\[
 \alpha=(\alpha+a\beta)-a\beta
\]
is a difference of effective integral universal cycles. For \(r=0\)
the same argument uses the empty regular sequence. This proves stalkwise
surjectivity, without requiring the separate positive and negative
parts of \(\alpha\) to be universal cycles.

Group completion, sheafification, and localization commute. Applying
Lemma~\ref{lem:acc-denominators} now proves
\eqref{eq:acc-group-completion}. The identification with coefficients
\(\Lambda\) is also \cite[Remark~9.1.3(3)]{CD}. The entire
construction uses the actual relative-cycle sheaves.
\end{proof}

\begin{lemma}[Power maps on additive cycle objects]
\label{lem:acc-power-action}
Let \(S\) be any noetherian scheme, let \(N\geq1\) be an integer,
and apply effective Nisnevich \(\mathbb A^1\)-localization to the
additive complex \(\mathcal Z_{S,r}[0]\). For each coordinate,
let \(c_j\) denote the endomorphism which replaces that coordinate
by \(1\). The commuting idempotents
\[
 e_I=\prod_{j\in I}(1-c_j)\prod_{j\notin I}c_j,
 \qquad I\subseteq\{1,\ldots,r\},
\]
split the additive object. Coordinatewise \(N\)-th power acts by
\(N^{|I|}\) on the summand selected by \(e_I\). In particular,
it acts by \(N^r\) on the fully reduced summand. These assertions
hold integrally, are natural in \(S\), and survive scalar extension
and Tate stabilization.
\end{lemma}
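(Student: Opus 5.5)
Since the coordinate endomorphisms \(c_j\) commute and are idempotent on \(\mathcal Z_{S,r}\), already before localization, the \(e_I\) form a complete system of orthogonal idempotents. Expanding \(\prod_j\bigl(c_j+(1-c_j)\bigr)=1\) gives \(\sum_I e_I=1\). Orthogonality follows from \(c_j(1-c_j)=0\). Idempotents split in the idempotent-complete stable category \(\mathrm{DA}(S,\mathbb Z)\), and effective localization is additive, so the localized object is the direct sum of the images of the \(e_I\). The coordinatewise \(N\)-th power map \(\pi_N\) is a morphism of tori fixing the unit section, so it commutes with every \(c_j\) and hence preserves each summand.

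It therefore suffices to compute \(\pi_N\) one coordinate at a time. Write \(\pi_N=\prod_j\pi_{N,j}\), where \(\pi_{N,j}\) raises only the \(j\)-th coordinate to the \(N\)-th power; these maps commute with one another and with the \(c_k\). On the summand \(e_I\), each \(j\notin I\) contributes a factor \(c_j\), and \(\pi_{N,j}c_j=c_j\) because \(c_j\) forgets coordinate \(j\). So the whole claim reduces to one identity: for \(j\in I\),
\[
 (1-c_j)\,\pi_{N,j}\simeq N\,(1-c_j)
\]
as endomorphisms of the localized additive object. Naturality of this identity in the remaining coordinates and in \(S\) is needed so that the factors can be multiplied. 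Once it holds, \(\pi_N e_I=N^{|I|}e_I\).

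The one-coordinate identity is the main obstacle. It must be realised by an integral \(\mathbb A^1\)-homotopy of finite correspondences that works over any noetherian \(S\), including characteristic \(p\), where the \(N\)-th power may be inseparable. My first attempt would be the following. Over \(\mathbb G_m\times\mathbb A^1_\lambda\), with coordinates \((x,\lambda)\), take the finite flat family of subschemes of the target \(\mathbb G_m\) cut out by
\[
 \Phi_\lambda(t)=t^N-\lambda x-(1-\lambda)\,x^{-?}\cdots
\]
More precisely, take a monic polynomial of degree \(N\) in \(t\), with constant term a unit, interpolating between two endpoints. At \(\lambda=0\) it should be \((t-x)^N\), whose cycle is \(N[\Gamma_{\mathrm{id}}]\). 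At \(\lambda=1\) it should be \(t^N\)-type data whose cycle pushes forward to the graph of \(x\mapsto x^N\).

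A cleaner route is the standard one. Use the cycle \(\{t:\ t^N=x\}\), the transpose of \(\pi_N\), which is finite flat of degree \(N\) over the source. The composite \(\pi_{N}\circ{}^t\pi_N\) then equals \(N\cdot\mathrm{id}\) as a finite correspondence, and \({}^t\pi_N\circ\pi_N\) is the cycle \(\{(x,t): t^N=x^N\}\). Next, homotope \(\{t^N=x^N\}\) to \(\{t^N=1\}\cdot x\), where \(\{t^N=1\}\) is the degree-\(N\) cycle \(\mu_N\) viewed as a point of \(\mathbb G_m\). On the reduced summand \((1-c_j)\), the translate by \(\mu_N\) equals \(N\) times the translate by \(1\). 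To see this, degenerate \(\mu_N=\{t^N=1\}\) to \(N[1]\) through the integral family \(\{(t-1)^N=\lambda(t^N-1-(t-1)^N)+\ldots\}\). Alternatively, use the \(\mathbb A^1\)-homotopy \(\{t^N-(1-\lambda)\}\cdot\) plus the monic family \((t-1)^N+\lambda\bigl(t^N-1-(t-1)^N\bigr)\), which has unit constant term on an open containing both endpoints, together with Nisnevich–\(\mathbb A^1\) invariance.

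This exhibits \(\pi_N\) as acting by \(N\) on \(\widetilde{\mathbb Z}^{\mathrm{tr}}(\mathbb G_m)\) up to the map through \(c_j\). The difficulty is arranging the homotopy family to stay inside \(\mathbb G_m\), that is, with a unit constant term, for all \(\lambda\in\mathbb A^1\). If it does not, I would restrict to an open and argue via the reduced summand, using that \(\mathbb Z^{\mathrm{tr}}(\mathbb G_m)\) modulo the base point is the \(\mathbb G_m\)-object whose endomorphisms are \(\mathcal O^\times\)-linear. Finally, exterior products make the identity hold coordinatewise, and the splitting and the scalar identities are preserved by \(\otimes^{\mathbf L}R\) and \(\Sigma_T^\infty\).
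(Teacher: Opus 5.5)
\paragraph{Review of the proposal.}
Your splitting by the \(e_I\) is fine, and so is the reduction to the single identity \((1-c_j)\pi_{N,j}\simeq N(1-c_j)\); both match the paper. That identity, however, is the whole content of the lemma, and you never produce an integral \(\mathbb A^1\)-homotopy realizing it.

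Your first attempt has a degree mismatch. A degree-\(N\) family cannot end at the degree-one cycle \(\Gamma_{[N]}\). It must end at \(\Gamma_{[N]}\) plus a degree-\((N-1)\) correction that \(1-c_j\) annihilates.

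The transpose route is logically insufficient. From \(\pi_N\,{}^t\pi_N=N\) and \({}^t\pi_N\,\pi_N\simeq N\) on the reduced summand you cannot conclude \(\pi_N\simeq N\) there; abstractly, \(\pi=1\) and \(\sigma=N\) satisfy both relations. You would also need \({}^t\pi_N\) to act as the identity on that summand, which is a homotopy of exactly the kind you are trying to build.

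Worse, the proposed degeneration of \(\mu_N=\{t^N=1\}\) to \(N[1]\) does not exist for even \(N\). A monic family over \(\mathbb Z[\lambda]\) with unit constant term has constant term in \(\mathbb Z[\lambda]^\times=\{\pm1\}\), so that constant term is independent of \(\lambda\). But \(t^N-1\) and \((t-1)^N\) have constant terms \(-1\) and \(+1\). This is intrinsic, not a matter of choosing the family better. Along any finite correspondence \(\mathbb A^1_{\mathbb Q}\to\mathbb G_m\), the norm of the coordinate lies in \(\mathbb Q[\lambda]^\times=\mathbb Q^\times\), so chains of \(\mathbb A^1\)-homotopies preserve norms. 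For even \(N\), the cycles \(\mu_N\) and \(N[1]\) have norms \(-1\) and \(1\).

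Your fallbacks do not rescue this:
\begin{itemize}
\item Restricting the parameter to an open of \(\mathbb A^1\) containing \(0\) and \(1\), such as the locus where \(1-2\lambda\) is invertible, yields no \(\mathbb A^1\)-homotopy.
\item The appeal to \(\mathcal O^\times\)-linear endomorphisms rests on a field-level computation. It is unavailable integrally over an arbitrary, possibly singular or nonreduced, noetherian base.
\end{itemize}

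\paragraph{The missing idea.}
Add the correction term and choose the two endpoints so that their constant terms coincide identically. The paper interpolates between \((z-x)^N\), whose cycle is \(N\Gamma_{\mathrm{id}}\), and \((z-x^N)(z-1)^{N-1}\), whose cycle is \(\Gamma_{[N]}+(N-1)\Gamma_1\). Both have constant term \((-1)^Nx^N\). Hence
\[
P_t(z)=(1-t)(z-x)^N+t(z-x^N)(z-1)^{N-1}
\]
is monic of degree \(N\) with unit constant term on all of \(\mathbb G_m\times\mathbb A^1\). Its zero scheme is finite locally free inside the target \(\mathbb G_m\), and it defines an integral universal finite correspondence over \(\mathbb Z\). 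It therefore works in every characteristic and naturally in \(S\).

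Composition with it gives \([N]_*\simeq N\,\mathrm{id}+(1-N)c_1\). So \([N]_*\) has eigenvalue \(N\) on the image of \(1-c_1\) and eigenvalue \(1\) on the image of \(c_1\). At \(x=1\) the polynomial is \((z-1)^N\), so the homotopy is constant on the unit section and compatible with the projectors. No transpose, no \(\mu_N\), and no parity issue enters. Products with identity correspondences in the other coordinates then give \(N^{|I|}\), exactly as in your reduction.
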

\begin{proof}
For one coordinate use the polynomial over \(\mathbb Z\)
\[
 P_t(z)=(1-t)(z-x)^N+t(z-x^N)(z-1)^{N-1}.
\]
It is monic of degree \(N\) with unit constant coefficient
\((-1)^N x^N\). Its zero scheme is consequently finite locally
free of degree \(N\) over \(\mathbb G_m\times\mathbb A^1\)
and lies in the target \(\mathbb G_m\). It defines an integral
universal finite correspondence. Its endpoint cycles are
\(N\Gamma_{\mathrm{id}}\) and
\(\Gamma_{[N]}+(N-1)\Gamma_1\). Composing with any universal
finite cycle supplies an additive \(\mathbb A^1\)-homotopy.
Consequently
\[
 [N]_* = N\,\mathrm{id}+(1-N)c_1.
\]
The homotopy is constant on the unit section, since its polynomial
there is \((z-1)^N\).

For several coordinates take the product of this correspondence
with the identity correspondences in the remaining coordinates.
The coordinate operations commute. On the \(e_I\)-summand their
eigenvalues are \(N\) for \(j\in I\) and \(1\) otherwise;
their composite has eigenvalue \(N^{|I|}\). This uses coordinate
correspondences on the actual cycle sheaf, and requires no tensor
product description of that sheaf after forgetting transfers.
\end{proof}

\begin{remark}[Canonical maps and the existing cycle bonds]
\label{rem:acc-functoriality}
All isomorphisms above are induced by the canonical relative-cycle
maps. They respect addition, exterior products, coordinate insertion
and projection, and pullback for morphisms between seminormal bases.
Hence the corresponding derived additive comparisons respect the
split reduced projectors. Their compatibility with exterior products
also gives compatibility with the actual cycle bonds, defined by
exterior product with the graph of a new torus coordinate. In the
notation of normalized stages, for \(f:T\to S\) this is the
canonical equality
\[
 \beta_{f,r+1}\,f^*b_{S,r}=b_{T,r}\,\beta_{f,r}.
\]
The power calculation is used levelwise: the scalar on the fully
reduced stage of torus rank \(r\) is \(N^r\). In particular, for
\(p\in P\) every eigenvalue is invertible after extension to
\(\Lambda\), so the additive \(p\)-power telescopes at each
stage are redundant. Compatibility with bonds in the displayed
identity concerns the canonical exchange maps; no assertion that
the unmodified power maps commute with those bonds is needed.
\end{remark}

\section{Base change for additive cycle spectra}
\label{aca-linear-section}

\begin{lemma}[Additive linearization after restriction]
\label{aca-linearization}
Let \(f:T\to S\), let \(i:V\to T\), and let
\(\mathcal A_S,\mathcal A_T\) be abelian sheaves of
\(\Lambda\)-modules with a specified additive comparison. If
\[
 i^*f^*M_S(\mathcal A_S)\longrightarrow
 i^*M_T(\mathcal A_T)
\]
is an equivalence, then so is
\[
 i^*f^*N_S(\mathcal A_S)\longrightarrow
 i^*N_T(\mathcal A_T).
\]
The same implication holds after a sequential telescope for compatible
additive endomorphisms, provided the underlying-space comparison is
known for the telescoped sheaves.
\end{lemma}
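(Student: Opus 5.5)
We will deduce the additive statement from the space-level one by exhibiting \(N_S(\mathcal A_S)\) as a natural colimit, built from free linear motives of spaces, in a way that \(i^*f^*\) and the comparison map both respect. The natural choice is the simplicial bar construction of the additive structure. For an abelian sheaf \(\mathcal A\) of \(\Lambda\)-modules, the counit \(\Lambda[\mathcal A]\to\mathcal A[0]\) sends a generator \([a]\) to \(a\). It extends to the standard free resolution
\[
 \cdots\to\Lambda[\Lambda[\Lambda[\mathcal A]]]\rightrightarrows\Lambda[\Lambda[\mathcal A]]\rightrightarrows\Lambda[\mathcal A]\to\mathcal A,
\]
the monadic bar resolution of \(\mathcal A\) for the free \(\Lambda\)-module monad. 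Its geometric realization is \(\mathcal A[0]\) already as complexes of presheaves. Since \(L_{\mathrm{Nis},\mathbb A^1}\) and \(\Sigma_T^\infty\) preserve colimits, we obtain
\[
 N_S(\mathcal A)\simeq|M_S(\mathcal A^{(k)})|,
\]
where \(\mathcal A^{(k)}\) is the underlying space of the iterated free module. The comparison maps and \(i^*f^*\) commute with geometric realization. It therefore suffices to prove that each level \(i^*f^*M_S(\mathcal A_S^{(k)})\to i^*M_T(\mathcal A_T^{(k)})\) is an equivalence.

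This levelwise step is the \textbf{main obstacle}. The hypothesis covers only \(k=0\), while the higher levels are free modules on the underlying spaces of free modules. The first attempt is a cleaner route: use the Dold--Kan/additive description \(\Lambda[\mathcal A]\simeq \Lambda[\text{underlying space}]\) together with the splitting \(\mathcal A[0]\to\Lambda[\mathcal A]\to\mathcal A[0]\) up to the augmentation. In stable linear categories, \(N(\mathcal A)\) is the colimit of the \(\mathbb A^1\)-local stabilization of the space \(\mathcal A\), with addition used to deloop. More precisely, \(N_S(\mathcal A)\) is a retract of \(M_S(\mathcal A)\) functorially in additive comparisons, via the map \(\mathcal A[0]\to\Lambda[\mathcal A]\) given by the \(E_\infty\)-group structure after stabilization. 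Equivalently, \(\Sigma^\infty\) of a grouplike \(E_\infty\)-space splits off its connective-spectrum delooping. Once \(N\) is exhibited as a natural retract of \(M\), compatible with the specified additive comparison, a retract of an equivalence is an equivalence. This retract argument needs only the \(k=0\) hypothesis.

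The delicate point is that the retraction exists only after stabilization. It must be constructed naturally in \(\mathcal A\), so that it commutes with \(i^*f^*\) and with the comparison. We would therefore construct it in presheaves of \(H\Lambda\)-module spectra before localization, as the counit of the free-forgetful adjunction between connective \(H\Lambda\)-modules and grouplike \(E_\infty\)-spaces, restricted from the structure already present. Naturality of this counit handles both \(f^*\) and the restriction \(i^*\), since pullback is symmetric monoidal and preserves free linearization.

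For the telescope variant, note that compatible additive endomorphisms act compatibly on \(M\) and on \(N\), and the retraction is natural in additive maps. Passing to sequential colimits commutes with \(i^*f^*\), with localization, and with retracts. The telescoped \(N\)-comparison is therefore a retract of the telescoped \(M\)-comparison, which is an equivalence by hypothesis.
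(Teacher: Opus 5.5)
Your proposal has a genuine gap at its central step: the natural splitting of \(N_S(\mathcal A)\) off \(M_S(\mathcal A)\) that you rely on does not exist. The counit of the free--forgetful adjunction is the augmentation \(\Lambda[\mathcal A]\to\mathcal A[0]\). It points the wrong way to be a section, and in general no retraction exists at all.

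Take \(\Lambda=\mathbb Z\), which is the case \(P=\varnothing\) of the paper, and let \(\mathcal A\) be the constant sheaf \(\mathbb Z/\ell\). Evaluate your presheaf-level retraction at a connected test scheme. This would make \(\mathbb Z/\ell\) a retract of the free abelian group \(\mathbb Z[\mathbb Z/\ell]\) in \(D(\mathbb Z)\). That is impossible, because there is no nonzero map from \(\mathbb Z/\ell\) to a free abelian group.

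Motivic localization does not repair this. Over a field \(k\), with \(\ell\) odd, \(\operatorname{End}(\mathbf1)=GW(k)\) in \(\mathrm{DA}(k,\mathbb Z)\) has no odd torsion. Hence every map \(\mathbf1/\ell\to\mathbf1\) kills the canonical map \(\mathbf1\to\mathbf1/\ell\), so \(\mathbf1/\ell=N(\mathcal A)\) is not a retract of \(\bigoplus_{\mathbb Z/\ell}\mathbf1=M(\mathcal A)\).

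The slogan you cite is also false: \(\Sigma^\infty\) of a grouplike \(E_\infty\)-space does not split off its delooping. For example, \(H\mathbb Z\) is not a retract of the wedge of spheres \(\Sigma^\infty_+\mathbb Z\), as mod \(2\) cohomology shows. The only retraction of this kind is the unstable triangle identity \(X\to\Omega^\infty\Sigma^\infty_+X\to X\), which gives nothing for the stable comparison maps.

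Relatedly, stabilizing the motives of the deloopings of \(\mathcal A\) does not produce \(N(\mathcal A)\). Its underlying motivic spectrum is \(H\Lambda^{\mathrm{top}}\otimes\) (underlying spectrum of \(N(\mathcal A)\)). So neither the main implication nor the telescope variant is proved.

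The missing idea is to retract along the coefficient algebra rather than along \(M\to N\). You were right that the monadic bar resolution is not controlled by the hypothesis. The bar construction to use is that of the group law, whose levels are the finite Cartesian powers of \(\mathcal A\).

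Since \(M\), \(f^*\) and \(i^*\) preserve colimits and are symmetric monoidal, the hypothesis on \(M(\mathcal A)\) propagates to \(M(K(\mathcal A,j))\) for every \(j\), including the reduced motives pointed at zero. Stabilizing in \(j\), the colimit \(\operatorname*{colim}_j\widetilde M(K(\mathcal A,j))[-j]\) has underlying spectrum \(H\Lambda^{\mathrm{top}}\otimes vN(\mathcal A)\). The reason is that free \(\Lambda\)-linearization smashes the Eilenberg--MacLane presheaf spectrum with \(H\Lambda^{\mathrm{top}}\). Writing \(u\) for the restricted additive comparison, this shows that \(H\Lambda^{\mathrm{top}}\otimes v(u)\) is an equivalence.

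The retraction that genuinely exists is \(E\to H\Lambda^{\mathrm{top}}\otimes E\to E\), given by the unit followed by the module action. It is natural in module maps. It exhibits \(v(u)\) as a retract of \(H\Lambda^{\mathrm{top}}\otimes v(u)\), hence an equivalence, and conservativity of \(v\) finishes the argument. The telescope version then follows because filtered colimits commute with all of these constructions.
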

\begin{proof}
Both functors on the displayed underlying-space comparison preserve
colimits and are symmetric monoidal. Iterated additive bar constructions,
whose levels are finite Cartesian powers, therefore give equivalences
on \(K(\mathcal A_S,j)\to K(\mathcal A_T,j)\) for every
\(j\geq0\), including their reduced motives pointed at zero.

Write \(v\) for the conservative functor forgetting the ordinary
\(H\Lambda^{\mathrm{top}}\)-module structure in the linear-model
description of \(\mathrm{DA}\). Simplicial stabilization gives the
natural formula
\[
 v\left(\operatorname*{colim}_{j}
       \widetilde M(K(\mathcal A,j))[-j]\right)
 \simeq H\Lambda^{\mathrm{top}}\otimes vN(\mathcal A).
\]
Indeed, before motivic localization, the stabilization of the
Eilenberg--MacLane spaces is the Eilenberg--MacLane presheaf spectrum;
free \(\Lambda\)-linearization smashes that spectrum with
\(H\Lambda^{\mathrm{top}}\). All subsequent localizations,
Tate stabilization, and pullbacks preserve these colimits and tensors.
Thus, if \(u\) denotes the asserted additive comparison after applying
\(i^*\), the map proved invertible by the bars is exactly
\(H\Lambda^{\mathrm{top}}\otimes v(u)\).

The unit and module action give a natural retraction
\[
 E\longrightarrow H\Lambda^{\mathrm{top}}\otimes E
   \longrightarrow E.
\]
Applied to the module morphism \(u\), this is a retraction in the
arrow category of \(\mathrm{SH}(V)\). Hence \(v(u)\) is an
equivalence, and conservativity proves the claim. No idempotence of
the ordinary integral Eilenberg--MacLane spectrum is asserted.
Filtered colimits commute with the additive construction, which also
proves the telescope version.
\end{proof}

\begin{proposition}[Canonical stages on seminormal bases]
\label{aca-seminormal-stages}
Let \(f:T\to S\) be a morphism of seminormal noetherian
finite-dimensional \(B\)-schemes. For every \(r\geq0\), the
canonical maps
\[
 f^*N_S(\mathcal Z_{S,r}\otimes\Lambda)
       \longrightarrow N_T(\mathcal Z_{T,r}\otimes\Lambda),
 \qquad
 f^*E_{S,r,\Lambda}\longrightarrow E_{T,r,\Lambda}
\]
are equivalences. They respect the coordinate projectors, external
products, and actual Tate bonds. Consequently \(\tau_{f,\Lambda}\)
is an equivalence of the specified coefficient algebras.
No finite-type hypothesis on \(f\), \(S/B\), or \(T/B\) is needed.
\end{proposition}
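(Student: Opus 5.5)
The strategy is to reduce the additive comparison to the geometric symmetric-power errors of Section~\ref{acg-section}, using the cycle presentation of Section~\ref{sec:acc-cycles} and the linearization Lemma~\ref{aca-linearization}.

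By Lemma~\ref{lem:acc-group-completion}, on seminormal bases \(\mathcal Z_{S,r}\otimes\Lambda\) is the Nisnevich sheafified group completion of \(D_{S,r}[P^{-1}]\). The same holds over \(T\), and the identification is compatible with pullback by Remark~\ref{rem:acc-functoriality}. Since \(M\) preserves colimits and group completion of the sheaf of monoids is obtained by a colimit (bar construction), the underlying-space comparison \(f^*M_S(\mathcal Z_{S,r}\otimes\Lambda)\to M_T(\mathcal Z_{T,r}\otimes\Lambda)\) reduces to the comparison for \(D_{S,r}[P^{-1}]\). This is in turn a filtered colimit, along multiplication by elements of \(P\), of the maps \(f^*M_S(h_{\operatorname{Sym}^n_S X_{S,r}})\to M_T(h_{\operatorname{Sym}^n_T X_{S,r}})\), summed over \(n\). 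By Lemma~\ref{acg-quotient-base-change}, \(\operatorname{Sym}^n\) of the torus commutes with base change, so the cofibers are exactly the errors \(\mathcal B_f((\mathbb G_m^r)^n,\Sigma_n)\).

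It therefore suffices to show that the colimit of these errors vanishes after linearization. I would check this by joint conservativity of \(j^*\) for \(j:T[1/n!]\hookrightarrow T\) and \(i_p^*\) for the primes \(p\le n\). Since \(T\) is a \(B\)-scheme, only the primes \(p\in P\) have nonempty fibers. On \(T[1/n!]\) Proposition~\ref{acg-tame} kills each degree-\(n\) error. On \(T_p\), Proposition~\ref{acg-arithmetic} kills the \(p\)-power telescope. To use it, I pass the telescope through Lemma~\ref{aca-linearization} in its telescope version. This gives vanishing of the \(p\)-power telescope of \(i_p^*\) of the additive cofiber of \(f^*N_S(\mathcal Z_{S,r}\otimes\Lambda)\to N_T(\mathcal Z_{T,r}\otimes\Lambda)\). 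The projectors \(e_I\) split this cofiber, and on the \(e_I\)-summand the coordinatewise \(p\)-power acts by \(p^{|I|}\), by Lemma~\ref{lem:acc-power-action}. This scalar is invertible in \(\Lambda\), so the telescope equals the summand itself and the summand vanishes. Hence the first displayed map is an equivalence after each \(i_p^*\) and after \(j^*\).

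The main obstacle is the bookkeeping between the three incompatible levels: geometric quotient spaces, monoid localization \(D[P^{-1}]\), and additive objects. The power germ \(P_Y\) on \(\operatorname{Sym}^n\) must match the coordinate power correspondence on cycles under the inclusion of Lemma~\ref{lem:acc-denominators}, which asserts compatibility with finite coordinate power maps. The telescopes must also commute with the group-completion bar construction and with \(i_p^*\). Another point to check is that the degree index is unbounded. The tame-open argument handles each fixed \(n\), and I would take the colimit over \(n\) only after establishing vanishing on each open/fiber stratum. Because the primes \(\le n\) vary with \(n\), I would instead phrase vanishing stalkwise over \(T\), via the pointwise conservativity of \(i_x^*\) for points \(x\in T\). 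At a point of residue characteristic \(p\), all degree errors are treated by the single \(p\)-telescope. At a characteristic-zero point, every \(n!\) is invertible.

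Finally, the second displayed map is the first one, shifted and Tate twisted, and restricted to the fully reduced summand \(C_{S,r,\Lambda}\) cut out by the split projector \(\prod(1-e_i)\). Hence it is an equivalence. Its compatibility with products and bonds is Remark~\ref{rem:acc-functoriality}. Passing to the colimit over \(r\) and applying Proposition~\ref{prop:cycle-tower-presentation} identifies the colimit with \(\tau_{f,\Lambda}\), which is therefore an equivalence of commutative algebras. Nowhere is a finite-type hypothesis used, since every cited input holds for arbitrary noetherian seminormal bases.
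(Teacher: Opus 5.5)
Your proposal is correct and follows essentially the same route as the paper. It reduces, via the cycle presentation and a colimit model of group completion, to the symmetric-power errors $\mathcal B_f((\mathbb G_m^r)^n,\Sigma_n)$. It kills these at characteristic-zero points (equivalently over $T_{\mathbb Q}$) by the tame comparison, and on each $T_p$ by the arithmetic power telescope passed through Lemma~\ref{aca-linearization}. It then removes the telescope using the invertible action $p^{|I|}$ of Lemma~\ref{lem:acc-power-action}, and concludes by residue-point conservativity before applying the reduced projector, shift and twist.

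The one detail worth making explicit is that your ``bar construction'' should be the translation-category nerve with levels $D^{q+2}$. That nerve models the group completion as a colimit only because $D_{S,r}[P^{-1}]$ and its $p$-power telescopes are cancellative. The paper gets this cancellativity from the identification $D_{S,r}[P^{-1}]\cong C^+_{S,r}[P^{-1}]$ of Lemma~\ref{lem:acc-denominators}, together with the fact that filtered colimits preserve cancellation.
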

\begin{proof}
Fix \(r\), and write
\[
 D_S=\coprod_{n\geq0}h_{\operatorname{Sym}^n_S\mathbb G_{m,S}^r},
 \qquad
 D_T=\coprod_{n\geq0}h_{\operatorname{Sym}^n_T\mathbb G_{m,T}^r}.
\]
Use the comparison of geometric quotient motives proved above, with
the full permutation group in each degree. Localize these monoids at
the multiplicity maps generated by \(P\). The cycle presentation
proved above identifies their sheafified group completions with
\(\mathcal Z_{S,r}\otimes\Lambda\) and
\(\mathcal Z_{T,r}\otimes\Lambda\).

First restrict to \(T_{\mathbb Q}\). In a fixed symmetric degree
\(n\), the geometric comparison already vanishes after the open
restriction \(T[1/n!]\), by the tame geometric argument over
\(S[1/n!]\). Pulling further to \(T_{\mathbb Q}\), then taking
the coproduct of all degrees and the multiplicity localization,
therefore gives an equivalence on the free motives of the two monoids.
This does not identify a generic pullback of a singular restricted
representable with a newly formed intrinsic representable.

For completeness, group completion is implemented by the translation
category of a cancellative monoid \(D\). Its objects are pairs
\((a,b)\), and an arrow translates both entries by the same element.
Its nerve in degree \(q\) is \(D^{q+2}\). A component is indexed by
\(a-b\in D^{\mathrm{gp}}\); it is filtered, since two pairs with
the same difference admit a common translation, and cancellation makes
parallel arrows unique. Its realization is the ordinary group completion
after sheafification. The monoidal property and preservation of colimits
pass the preceding comparison to group completions. Lemma~\ref{aca-linearization}
then proves that the generic pullback of the actual additive comparison
is an equivalence.

Next fix \(p\in P\), and let \(i_p:T_p\to T\). On each symmetric
degree the arithmetic geometric argument gives vanishing after the
coordinatewise \(p\)-power telescope of the \(i_p^*\)-restricted
comparison error. Coproducts and multiplicity localizations preserve
this statement. These power maps commute with monoid addition.
Moreover, a sequential filtered colimit commutes with finite Cartesian
products: the diagonal \(\mathbb N\to\mathbb N^m\) is cofinal.
We may consequently apply the translation nerves just described to
the telescoped monoids. Cancellation is preserved by filtered colimits,
and group completion commutes with them. Additive bar constructions
and Lemma~\ref{aca-linearization} now yield
\begin{equation}\label{aca-additive-telescope}
 \operatorname*{colim}_{u_p}
 i_p^*\operatorname{cofib}\left(
 f^*N_S(\mathcal Z_{S,r}\otimes\Lambda)
 \longrightarrow N_T(\mathcal Z_{T,r}\otimes\Lambda)\right)=0.
\end{equation}
Every comparison here is induced by the original relative-cycle map.
In particular, applying \(i_p^*\) has not been replaced by evaluation
of a sheaf of cycles formed intrinsically over \(T_p\).

The integral power-map homotopy proved above makes \(u_p\) act by
\(p^{|I|}\) on the summand reduced in coordinates \(I\), on both
additive objects. Since \(p\) is a unit in \(\Lambda\), this is
an equivalence on each object, on their comparison arrow, and on its
cofiber. It remains an equivalence after \(i_p^*\). The telescope in
\eqref{aca-additive-telescope} can therefore be removed. Thus the
\(i_p^*\)-restriction of the actual additive cofiber is zero for
every \(p\in P\).

Every point of \(T\) factors through \(T_{\mathbb Q}\) or one
of the \(T_p\). Residue-point pullbacks are jointly conservative in
\(\mathrm{DA}(T,\Lambda)\), including on noncompact objects:
use the conservative ordinary linear forgetful functor and
\cite[Proposition~3.2]{BEM}, or
\cite[Proposition~4.3.17 and Example~6.1.13]{CD}.
The additive comparison is therefore an equivalence.

The canonical relative-cycle comparison commutes with every coordinate
reset. Applying the split projector \(\prod_j(1-e_j)\), then the
shift and twist in \eqref{aca-stages}, gives the actual stage comparison.
Its compatibility with the exterior product and the graph of the new
coordinate gives the actual bonds, not merely an abstract isomorphism
of the stages. Taking the colimit in \eqref{aca-stages} proves the
assertion about \(\tau_f\). An equivalence of underlying objects
detects an equivalence of commutative algebras.
\end{proof}

\section{Proof of the comparison theorem}
\label{aca-reconstruction}

\begin{lemma}[The arithmetic coefficient algebra]
\label{aca-anchor}
The canonical comparison
\(\mu_{B,\Lambda}:A_{B,\Lambda}\to H\Lambda_B\)
is an equivalence.
\end{lemma}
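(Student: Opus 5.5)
Since \(B\) is regular, excellent and of dimension at most one, I would prove Lemma~\ref{aca-anchor} by checking it on residue points. I would not try to compare the framed and cycle towers directly over \(B\).

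\textbf{Reduction to fields.} The residue-point pullbacks are jointly conservative in \(\mathrm{DA}(B,\Lambda)\), as recalled in the proof of Proposition~\ref{aca-seminormal-stages}. So it suffices to show that \(x^*\operatorname{cofib}(\mu_{B,\Lambda})=0\) for every point \(x:\operatorname{Spec}k\to B\). Here \(k\) is \(\mathbb Q\) or \(\mathbb F_p\) with \(p\in P\), or \(B\) is \(\operatorname{Spec}\mathbb Q\) itself when \(P=\varnothing\).

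Formula~\eqref{actual:mu-base-change} identifies \(x^*\mu_{B,\Lambda}\), composed with the absoluteness equivalence \(x^*A_{B,\Lambda}\simeq A_{k,\Lambda}\), with \(\tau_{x,\Lambda}\circ\mu_{k,\Lambda}\). Both \(B\) and \(\operatorname{Spec}k\) are seminormal (regular), so Proposition~\ref{aca-seminormal-stages} makes \(\tau_{x,\Lambda}\) an equivalence. This needs no finite-type hypothesis, although here \(x\) is of finite type in any case. The problem therefore reduces to showing that \(\mu_{k,\Lambda}\) is an equivalence for the prime fields \(k=\mathbb Q\) and \(k=\mathbb F_p\), \(p\in P\), with coefficients \(\Lambda\) in which \(p\) is invertible.

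\textbf{The field case.} Over a perfect field, both sides are known to be motivic cohomology.

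On the source side, Proposition~\ref{actual:framed-presentation} identifies \(\underline{A_{k}}\) with Spitzweck's spectrum \(M\mathbb Z_k\). Over a field, this agrees with Voevodsky's \(H\mathbb Z_k\) by Hoyois's framed identification of the degree sheaf \cite[Corollary~22]{HoyoisLocalization}, or by the zero-slice identifications \cite[Theorems~6.4.2 and~10.5.1]{Levine}, \cite{VoevSlice}. I would extend this to \(\Lambda\) by derived scalar extension, using \eqref{eq:criterion-cycle-scalars} on the target and the definition of \(A_{k,\Lambda}\) on the source.

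Knowing abstractly that the two spectra are equivalent is not enough: I must show that the specific map \(\mu_{k,\Lambda}\) is one. For this I would use unitality. After inverting the exponential characteristic, both algebras are \(0\)-truncated in the homotopy \(t\)-structure, with \(\underline{\pi}_0\) in weight zero equal to the constant sheaf \(\Lambda\). The source is \(s_0(\mathbf 1)\otimes\Lambda\). For the target, the identification \(H\Lambda_k\simeq M\Lambda_k\) is \cite[Theorem~5.8]{HKO} in positive characteristic and \cite{RO} in characteristic zero. A unital map between such objects, both equivalent to \(H\Lambda_k\), is then determined up to homotopy by its effect on the unit. I would make this precise by the same mapping-space argument used in the proof of Proposition~\ref{actual:framed-presentation}: maps of commutative algebras out of the zero-slice algebra into a module over it reduce to maps \(\mathbf 1\to H\Lambda_k\) in weight zero. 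The unit relation \eqref{actual:q-unit} then forces \(\mu_{k,\Lambda}\) to be the identification.

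\textbf{Main obstacle.} The hardest step is this identification of the specific comparison \(\mu_{k,\Lambda}\), not merely of abstract objects. If the rigidity argument fails, the fallback is a levelwise check. I would show that \(q_{k,r,\Lambda}\) becomes an equivalence after assembly by comparing the framed projection-formula map \eqref{actual:projection-formula-map} with Voevodsky's description of \(\mathbb Z^{\mathrm{tr}}(\mathbb G_m^{\wedge r})\). Over a perfect field, with \(p\) inverted, the framed-to-transfer functor \(\gamma_k\) induces an equivalence on the relevant \(\mathbb A^1\)-localized Tate levels, since the EHK and Levine comparisons identify these levels with motivic cohomology complexes. This would give vanishing of the assembled forward errors \(\operatorname*{colim}_r\mathcal U_{r,\Lambda}\) over \(k\), and hence over \(B\) by the reduction above.
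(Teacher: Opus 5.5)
Your proposal is essentially the paper's proof. It obtains the exchanges \(\tau_{x,\Lambda}\) at the residue points of \(B\) from Proposition~\ref{aca-seminormal-stages}, settles the field case via \(s_0(\mathbf1_k)\simeq H\mathbb Z_k\) together with the zero-slice universal property that pins down the unital map, and finishes by point conservativity. Three small corrections. The paper runs the rigidity step integrally for \(\mu_{k,\mathbb Z}\) and only then extends scalars, which is cleaner than working with \(\Lambda\) directly. Your aside that \(H\Lambda_k\) is \(0\)-truncated in the homotopy \(t\)-structure is not accurate as stated, and it is not needed: only the zero-slice property is used. Finally, your base-change identity should read \(\tau_{x,\Lambda}\circ x^*\mu_{B,\Lambda}=\mu_{k,\Lambda}\circ(x^*A_{B,\Lambda}\simeq A_{k,\Lambda})\).
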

\begin{proof}
The base \(B\), its generic field \(\mathbb Q\), and its closed
residue fields \(\mathbb F_p\), for \(p\in P\), are seminormal.
Proposition~\ref{aca-seminormal-stages} therefore proves the actual
canonical exchanges for all their point maps to \(B\). This includes
\(\operatorname{Spec}\mathbb Q\to B\), even when it is not of
finite type.

Over each of these perfect fields \(k\), the unit-compatible
identification \(s_0(\mathbf1_k)\simeq H\mathbb Z_k\) is the
field theorem recalled in \cite[proof of Theorem~6.3]{BEM}.
Since \(H\mathbb Z_k\) is a zero slice, the universal property of
zero-slice localization identifies the specified unital framed map
\(\mu_{k,\mathbb Z}\) with that equivalence. Derived extension to
\(\Lambda\) gives \(\mu_{k,\Lambda}\).
Equation~\eqref{actual:mu-base-change}, together with the exchanges already
proved, now shows that every residue-point pullback of
\(\mu_{B,\Lambda}\) is an equivalence. Point conservativity proves
the lemma. The proof uses coefficient exchange before point detection;
it does not infer exchange from the field comparison.
\end{proof}

\begin{lemma}[Finite universal homeomorphisms]
\label{aca-homeomorphisms}
Let \(q:S'\to S\) be a finite universal homeomorphism of
noetherian finite-dimensional \(B\)-schemes. Then \(q^*\) on
\(\mathrm{DA}(-,\Lambda)\) is an equivalence, and
\(\tau_{q,\Lambda}\) is an equivalence.
\end{lemma}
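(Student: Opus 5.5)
The plan is to deduce both assertions from invariance of the two coefficient categories under \(q\). Then \(\tau_{q,\Lambda}\) is obtained as the mate of an equivalence, not by computing cycles over the non-seminormal bases.

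\emph{Step 1: \(q^*\) on \(\mathrm{DA}(-,\Lambda)\).} Since \(q\) is finite, \(q_*=q_!\). By the localization property of \(\mathrm{DA}\) with arbitrary coefficients \cite[Theorem~6.2.1]{CD}, \(q^*\) is an equivalence if and only if the unit \(1\to q_*q^*\) and the counit \(q^*q_*\to1\) are equivalences.
\begin{itemize}
\item Both can be checked after pullback to residue points of \(S\), using proper base change for the finite map \(q\) and point conservativity (\cite[Proposition~3.2]{BEM}, as in the proof of Proposition~\ref{aca-seminormal-stages}).
\item Over a point \(x\), the fibre \(S'_x\) is a nilpotent thickening of \(\operatorname{Spec}k'\), where \(k'/\kappa(x)\) is finite and purely inseparable. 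Nilpotent thickenings are invisible to \(\mathrm{DA}\), again by localization.
\item This reduces Step 1 to a purely inseparable field extension of characteristic exponent \(p\), with \(p\) invertible in \(\Lambda\) by the choice of \(P\).
\item There I would invoke the known invariance of \(\mathrm{SH}(-)[1/p]\) under such extensions (Elmanto--Khan), or \cite[Proposition~2.1.9]{CDIntegral}, and transport it to \(\mathrm{DA}\) via Lemma~\ref{lem:linear-model}.
\end{itemize}

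\emph{Step 2: \(q^*\) on \(\mathrm{DM}(-,\Lambda)\).} This is the step I expect to be the main obstacle. I would use the same localization and point reduction in \(\mathrm{DM}\), which the reconstruction and continuity results of \cite{CD} make available for Nisnevich motives with \(\Lambda\)-coefficients. That reduces Step 2 to a purely inseparable extension \(k'/k\). For this I would show that for smooth \(X,Y\) over \(k\), pullback of \(\Lambda\)-universal relative cycles \(c_k(X,Y)_\Lambda\to c_{k'}(X_{k'},Y_{k'})_\Lambda\) is an isomorphism of presheaves with transfers, up to inverting the \(p\)-power degrees. The key inputs are:
\begin{itemize}
\item the finite map \(X_{k'}\to X\), a universal homeomorphism whose degree is a power of \(p\);
\item the multiplicity discussion in the proof of Lemma~\ref{lem:acc-denominators}, where cycle denominators divide inseparable degrees and so lie in \(P\).
\end{itemize}
This gives full faithfulness on compact generators. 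Essential surjectivity follows because every smooth \(k'\)-scheme descends up to a universal homeomorphism after perfection, and this is harmless in both categories once \(p\) is inverted.

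\emph{Step 3: the exchange.} Since \(F\) commutes with \(q^*\), passing to right adjoints gives \(q_*G_{S'}\simeq G_Sq_*\). Therefore
\[
 q_*HΛ_{S'}=q_*G_{S'}(\mathbf1_{S'})\simeq G_S(q_*q^*\mathbf1_S)\simeq G_S(\mathbf1_S)=HΛ_S,
\]
where the middle equivalence uses Step 2, and the equality \(HΛ_{S'}=G_{S'}(q^*\mathbf1_S)\) holds because \(\mathbf1_{S'}=q^*\mathbf1_S\). Applying the inverse equivalence \(q^*\) from Step 1 gives \(q^*HΛ_S\simeq HΛ_{S'}\). The last check is that this composite is the mate of the exchange \(\tau_{q,\Lambda}\) in \eqref{eq:criterion-exchange}. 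This follows formally, since \(\tau_{q,\Lambda}\) is by definition the mate of \(q^*F\simeq Fq^*\), and the unit and counit used above are equivalences.
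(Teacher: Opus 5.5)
\textbf{There is a genuine gap in Step~2.} Steps~1 and~3 are sound. Step~3 is exactly the mate argument the paper uses. Step~1 reaches the same Elmanto--Khan invariance; the paper simply applies it directly to \(q\) with \(P\) inverted, then passes to \(H\Lambda^{\mathrm{top}}\)-modules via Lemma~\ref{lem:linear-model}. Your detour through residue fields is legitimate in \(\mathrm{DA}\), where localization and point conservativity hold.

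In Step~2 you reduce the \(\mathrm{DM}\)-statement to fields by ``the same localization and point reduction in \(\mathrm{DM}\)''. Neither tool is available for Nisnevich motives with transfers over singular or nonreduced bases. The continuity results the paper cites for \(\mathrm{DM}(-,\Lambda)\) are weak continuity statements, not localization. The residue-point conservativity results it uses (e.g.\ Cisinski--D\'eglise, Proposition~4.3.17) rest on the localization property, which is not known for \(\mathrm{DM}\) in this generality.

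Within this paper, localization for \(\mathrm{DM}(S,\Lambda)\) is only a \emph{consequence} of Theorem~\ref{thm:main}. Its proof uses the present lemma through Proposition~\ref{aca-finite-type}, so your reduction is circular.

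The field-level sketch is also incomplete. An isomorphism of cycle groups on base-changed pairs \((X_{k'},Y_{k'})\) does not compute hom groups in \(\mathrm{DM}(k',\Lambda)\), whose site contains smooth \(k'\)-schemes not of that form. Your descent ``after perfection'' already assumes topological invariance of \(\mathrm{DM}\).

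\textbf{The fix.} Step~3 never needs \(q^*\) to be an equivalence on \(\mathrm{DM}\). It only needs \(G_{S,\Lambda}\) applied to the unit \(\mathbf1_\Lambda\to q_*q^*\mathbf1_\Lambda\) to be an equivalence, that is, the single map \(H\Lambda_S\to q_*H\Lambda_{S'}\) in \(\mathrm{DA}(S,\Lambda)\).

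Test this map against the smooth Tate generators \(M_S(X)(n)[m]\), with \(X\in\mathrm{Sm}_S\). Use \(F\dashv G\), \(q^*\dashv q_*\), and \(q^*M_S(X)\simeq M_{S'}(X_{S'})\). The induced map on homotopy classes is then the pullback from \([F_{S,\Lambda}M_S(X)(n)[m],\mathbf1_\Lambda]\) to \([F_{S',\Lambda}M_{S'}(X_{S'})(n)[m],\mathbf1_\Lambda]\). This is the canonical motivic-cohomology map for the finite universal homeomorphism \(X_{S'}\to X\).

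That map is an isomorphism when the residue characteristic exponents are invertible in \(\Lambda\) (Cisinski--D\'eglise, Proposition~11.2.7). Since these generators detect equivalences, your Step~3 then goes through verbatim. This is the paper's proof.
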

\begin{proof}
Topological invariance \cite[Theorem~2.1.1]{EK} identifies
\(\mathrm{SH}(S)[P^{-1}]\) with
\(\mathrm{SH}(S')[P^{-1}]\). Passing to modules over the ordinary
\(H\Lambda^{\mathrm{top}}\) algebra gives the assertion for
\(\mathrm{DA}\).

For the second assertion, consider the adjoint canonical map
\(H\Lambda_S\to q_*H\Lambda_{S'}\). Mapping a smooth Tate
generator into it computes the canonical motivic-cohomology map
for the finite universal homeomorphism \(X_{S'}\to X\), with
\(X/S\) smooth. This is an isomorphism by
\cite[Proposition~11.2.7]{CD}, whose hypothesis is precisely that
the residue characteristic exponents be invertible in \(\Lambda\).
Here only the elementary smooth coefficient exchange is used to
identify the cohomology groups. Smooth Tate generators and their
shifts detect equivalences, so the adjoint map is invertible.
Since \(q^*\dashv q_*\) is an equivalence in the linear category,
its mate \(\tau_q\) is invertible as well.
\end{proof}

\begin{proposition}[Coefficient exchange on finite-type bases]
\label{aca-finite-type}
For every finite-type \(B\)-scheme \(S\), with structural map
\(p_S:S\to B\), the maps
\[
 \tau_{p_S,\Lambda}:p_S^*H\Lambda_B\longrightarrow H\Lambda_S,
 \qquad
 \mu_{S,\Lambda}:A_{S,\Lambda}\longrightarrow H\Lambda_S
\]
are equivalences. All canonical coefficient exchanges between
finite-type \(B\)-schemes are equivalences.
\end{proposition}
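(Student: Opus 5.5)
The plan is to reduce to the seminormal case using finite seminormalization and Lemma~\ref{aca-homeomorphisms}. Let \(S\) be a finite-type \(B\)-scheme. Since \(B\) is excellent, \(S\) is excellent, so its seminormalization \(q:S^{\mathrm{sn}}\to S\) is finite; it is also a universal homeomorphism. Likewise \(B\) is regular, hence normal and seminormal. Proposition~\ref{aca-seminormal-stages}, applied to the composite \(p_S\circ q:S^{\mathrm{sn}}\to B\), shows that \(\tau_{p_Sq,\Lambda}\) is an equivalence. Lemma~\ref{aca-homeomorphisms} shows that \(\tau_{q,\Lambda}\) is an equivalence and that \(q^*\) is an equivalence of categories.

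Next we use the composition law for exchange maps:
\[
 \tau_{p_Sq,\Lambda}=\tau_{q,\Lambda}\circ q^*\tau_{p_S,\Lambda}.
\]
Two of the three maps in this identity are invertible, so \(q^*\tau_{p_S,\Lambda}\) is an equivalence. Since \(q^*\) is an equivalence, it is conservative, and therefore \(\tau_{p_S,\Lambda}\) is an equivalence.

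For \(\mu_{S,\Lambda}\) we apply \eqref{actual:mu-base-change} to \(f=p_S\). It expresses \(\mu_{S,\Lambda}\), up to the equivalence \(p_S^*A_{B,\Lambda}\simeq A_{S,\Lambda}\), as \(\tau_{p_S,\Lambda}\circ p_S^*\mu_{B,\Lambda}\). Lemma~\ref{aca-anchor} makes \(\mu_{B,\Lambda}\) an equivalence, so \(\mu_{S,\Lambda}\) is one as well.

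For a morphism \(f:T\to S\) of finite-type \(B\)-schemes we have \(p_T=p_Sf\), so
\[
 \tau_{p_T,\Lambda}=\tau_{f,\Lambda}\circ f^*\tau_{p_S,\Lambda}.
\]
Both \(\tau_{p_T,\Lambda}\) and \(f^*\tau_{p_S,\Lambda}\) are invertible, hence so is \(\tau_{f,\Lambda}\). Alternatively, one can invoke Proposition~\ref{prop:forward-finite-type} with \(\nu=\mu\).

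The only delicate point is the finiteness of seminormalization, which is needed to apply Lemma~\ref{aca-homeomorphisms} as stated. It follows because \(S\) is excellent, hence Nagata, so the normalization of \(S_{\mathrm{red}}\) is finite, and \(S^{\mathrm{sn}}\) sits between \(S_{\mathrm{red}}\) and that normalization. The reduction map \(S_{\mathrm{red}}\to S\) is itself a finite universal homeomorphism, so it is covered by the same lemma.

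Seminormality of \(S^{\mathrm{sn}}\) is exactly the hypothesis needed for Proposition~\ref{aca-seminormal-stages}, and that proposition requires no finite-type assumption on the map to \(B\). Everything else is formal coherence of the exchange maps.
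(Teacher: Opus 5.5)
Your proposal is correct and follows the paper's proof essentially step for step. Like the paper, you take the finite seminormalization composed with the reduction map, apply Proposition~\ref{aca-seminormal-stages} to \(p_Sq\) and Lemma~\ref{aca-homeomorphisms} to \(q\), and conclude via the exchange cocycle and conservativity of \(q^*\); you then get \(\mu_{S,\Lambda}\) from Lemma~\ref{aca-anchor} and \eqref{actual:mu-base-change}. The only minor difference is that you deduce the general \(\tau_{f,\Lambda}\) from the cocycle \(\tau_{p_T,\Lambda}=\tau_{f,\Lambda}\circ f^*\tau_{p_S,\Lambda}\), whereas the paper reads them off from \eqref{actual:mu-base-change} once both comparison maps are invertible; either works.
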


\begin{proof}
Let \(q:S^{\mathrm{sn}}\to S\) be the seminormalization of the
reduction followed by the reduction map. Excellence makes it finite;
it is a universal homeomorphism. Proposition~\ref{aca-seminormal-stages}
applies to \(p_Sq\), and Lemma~\ref{aca-homeomorphisms} applies
to \(q\). The exchange cocycle
\[
 \tau_{p_Sq,\Lambda}
   =\tau_{q,\Lambda}\circ q^*\tau_{p_S,\Lambda}
\]
and conservativity of \(q^*\) prove that
\(\tau_{p_S,\Lambda}\) is an equivalence.
Lemma~\ref{aca-anchor} and the naturality identity
\eqref{actual:mu-base-change} now prove the assertion about
\(\mu_{S,\Lambda}\). The same identity gives every canonical
exchange between finite-type \(B\)-schemes.
\end{proof}

\begin{proof}[Proof of Theorem~\ref{thm:main}]
Retain the arithmetic base \(B\) and the coefficient localization
\(\Lambda\) from~\eqref{aca-arithmetic-base}. We apply the
forward moving-tower criterion, Theorem~\ref{thm:forward-criterion},
to the framed presentation
\[
 D_{B,r,\Lambda}=D^{\mathrm{fr}}_{B,r,\Lambda},
 \qquad
 q_{r,\Lambda}=q_{B,r,\Lambda}.
\]
Proposition~\ref{actual:framed-presentation} gives the required
multiplicative presentation. By~\eqref{actual:mu}, its forward
maps assemble to the canonical algebra map \(\mu_{B,\Lambda}\).

The two vanishing-colimit hypotheses are now explicit.
Lemma~\ref{aca-anchor} gives
\[
 \operatorname*{colim}_r\mathcal U_{r,\Lambda}
   \simeq\operatorname{cofib}(\mu_{B,\Lambda})=0.
\]
For every finite-type \(B\)-scheme \(S\),
Proposition~\ref{aca-finite-type} gives
\[
 \operatorname*{colim}_r\mathcal F_{S,r,\Lambda}
   \simeq\operatorname{cofib}(\tau_{p_S,\Lambda})=0.
\]
Theorem~\ref{thm:forward-criterion} therefore provides the algebra
and module equivalences for every noetherian finite-dimensional
\(B\)-scheme, including their pullback compatibilities. The
comparison determined by its anchor is
\[
 \kappa_{S,\Lambda}
   =\tau_{p_S,\Lambda}\circ p_S^*\mu_{B,\Lambda}
   =\mu_{S,\Lambda},
\]
where the final equality is~\eqref{actual:mu-base-change}, with
the absolute identification of \(A\) understood. Consequently its
module functor is the canonical \(\Theta_{S,\Lambda}\) of
\eqref{actual:module-comparison}.

Finally, the hypothesis on \(R\) supplies \(\Lambda\to R\).
Linear presheaf constructions, Nisnevich and \(\mathbb A^1\)
localizations, and Tate inversion commute with derived extension
of coefficients along this map. Equivalently, take \(R\)-module
objects in the established \(\Lambda\)-linear equivalence and
use module transitivity. The two sides become
\(\operatorname{Mod}_{A_{S,R}}(\mathrm{DA}(S,R))\) and
\(\mathrm{DM}(S,R)\). The cycle presentation and its canonical
maps commute with this extension by
Proposition~\ref{prop:cycle-tower-presentation}, as do the framed
comparison and its adjoint module functor. Thus the resulting
equivalence is \(\Theta_{S,R}\), and the algebra and exchange
maps are the specified \(\mu_{S,R}\) and \(\tau_{f,R}\).
No flatness, noetherianity, or finite-generation hypothesis on
\(R\) is required.
\end{proof}

\end{document}